\newtheorem{corollary}{Corollary}[section]
\newtheorem{lemma}[corollary]{Lemma}
\newtheorem{theorem}[corollary]{Theorem}
\theoremstyle{definition}
\newtheorem{definition}[corollary]{Definition}
\newtheorem{remark}[corollary]{Remark}
\newtheorem*{acknowledgements}{\sc Acknowledgements}
\numberwithin{equation}{section}
\newcommand{\norm}[1]{\lVert #1 \rVert}
\newcommand{\spr}[2]{\langle #1, #2\rangle}
\def\Xint#1{\mathchoice
{\XXint\displaystyle\textstyle{#1}}%
{\XXint\textstyle\scriptstyle{#1}}%
{\XXint\scriptstyle\scriptscriptstyle{#1}}%
{\XXint\scriptscriptstyle\scriptscriptstyle{#1}}%
\!\int}
\def\XXint#1#2#3{{\setbox0=\hbox{$#1{#2#3}{\int}$ }
\vcenter{\hbox{$#2#3$ }}\kern-.6\wd0}}
\def\dashint{\Xint-}
\def\mint{\Xint{\rotatebox[origin][30]{$-$}}}
\def \Lip {\mathop {\rm Lip}\nolimits}
\def \div {\mathop {\rm div}\nolimits}
\def \dist {\mathop {\rm dist}\nolimits}
\def \de {\mathrm{d}}
\def \e {\epsilon}
\def \R {\mathbb R}
\def \E {E}
\begin{document}

\title{A dynamic model for viscoelastic materials with prescribed growing cracks}
\author[M. Caponi]{Maicol Caponi}
\address[Maicol Caponi]{SISSA, via Bonomea 265, 34136 Trieste, Italy}
\email{mcaponi@sissa.it}
\author[F. Sapio]{Francesco Sapio}
\address[Francesco Sapio]{SISSA, via Bonomea 265, 34136 Trieste, Italy}
\email{fsapio@sissa.it}
\thanks{Preprint SISSA 12/2019/MATE}

\begin{abstract}
In this paper, we prove the existence of solutions for a class of viscoelastic dynamic systems on time--dependent cracked domains, with possibly degenerate viscosity coefficients. Under stronger regularity assumptions we also show a uniqueness result. Finally, we exhibit an example where the energy--dissipation balance is not satisfied, showing there is an additional dissipation due to the crack growth.
\end{abstract}

\maketitle

\noindent
{\bf Keywords}: linear second order hyperbolic systems, dynamic fracture mechanics, elastodynamics, viscoelasticity, cracking domains.

\medskip

\noindent
{\bf MSC 2010}: 35L53, 35A01, 35Q74, 74H20, 74R10, 74D05.

\section{Introduction}

In the theory of Dynamic Fracture, the deformation of an elastic material evolves according to the elastodynamics system, while the evolution of the crack follows Griffith's dynamic criterion, see~\cite{Mott}. This principle, originally formulated in~\cite{Grif} for the quasi--static setting, states that there is an exact balance between the energy released during the evolution and the energy used to increase the crack, which is postulated to be proportional to the area increment of the crack itself.

For an antiplane displacement, the elastodynamics system leads to the following wave equation
\begin{equation}\label{wave_eq}
    \ddot u(t,x)-\Delta u(t,x)=f(t,x)\quad t\in[0,T],\,x\in\Omega\setminus\Gamma_t,
\end{equation}
with some prescribed boundary and initial conditions. Here, $\Omega\subset\R^d$ is an open bounded set with Lipschitz boundary, which represents the cross--section of the material, the closed set $\Gamma_t\subset\overline\Omega$ models the crack at time $t$ in the reference configuration, $u(t)\colon \Omega\setminus\Gamma_t\to\R$ is the antiplane displacement, and $f$ is a forcing term. In this case, Griffith's dynamic criterion reads
\begin{equation*}
    \mathcal E(t)+\mathcal H^{d-1}(\Gamma_t\setminus\Gamma_0)=\mathcal E(0)+\text{work of external forces},
\end{equation*}
where $\mathcal E(t)$ is the total energy at time $t$, given by the sum of kinetic and elastic energy, and $\mathcal H^{d-1}$ is the $(d-1)$--dimensional Hausdorff measure.

From the mathematical point of view, a first step to study the evolution of the fracture is to solve the wave equation~\eqref{wave_eq} when the evolution of the crack is assigned, see for example~\cite{NS,DM-Lar,DM-Luc,C2,T1} (we refer also to~\cite{DMC,DMLN,RN} for the case of a 1--dimensional model). When we want to take into account the viscoelastic properties of the material, Kelvin--Voigt's model is the most common one. If no crack is present, this leads to the damped wave equation
\begin{equation}\label{damped_eq}
    \ddot u(t,x)-\Delta u(t,x)-\Delta\dot u(t,x)=f(t,x)\quad(t,x)\in(0,T)\times\Omega.
\end{equation}
As it is well known, the solutions to~\eqref{damped_eq} satisfy the energy--dissipation balance 
\begin{equation}\label{en_diss}
    \mathcal E(t)+\int_0^t\int_\Omega|\nabla\dot u|^2\,\de x\,\de s=\mathcal E(0)+\text{work of external forces}.
\end{equation}
When we consider a crack in a viscoelastic material, Griffith's dynamic criterion becomes
\begin{equation}\label{Gr_en_diss}
    \mathcal E(t)+\mathcal H^{d-1}(\Gamma_t\setminus\Gamma_0)+\int_0^t\int_\Omega|\nabla\dot u|^2\,\de x\,\de s=\mathcal E(0)+\text{work of external forces}.
\end{equation}
For a prescribed crack evolution, this model was already considered by~\cite{DM-Lar} in the antiplane case, and more in general by~\cite{T1} for the vector--valued case. As proved in the quoted papers, the solutions to~\eqref{damped_eq} on a domain with a prescribed time--dependent crack, i.e., with $\Omega$ replaced by $\Omega\setminus\Gamma_t$, satisfy~\eqref{en_diss} for every time. This equality implies that~\eqref{Gr_en_diss} cannot be satisfied unless $\Gamma_t=\Gamma_0$ for every $t$. This phenomenon was already well known in mechanics as the viscoelastic paradox, see for instance~\cite[Chapter 7]{SL}.

To overcome this problem, we modify Kelvin--Voigt's model by considering a possibly degenerate viscosity term depending on $t$ and $x$. More precisely, we study the following equation
\begin{equation}\label{viscoelastic}
   \ddot u(t,x)-\Delta u(t,x)-\div (\Psi^2(t,x)\nabla\dot u(t,x))=f(t,x)\quad t\in[0,T],\, x\in\Omega\setminus\Gamma_t.
\end{equation}
On the function $\Psi\colon (0,T)\times\Omega\to\R$ we only require some regularity assumptions (see~\eqref{Psi}); a particularly interesting case is when $\Psi$ assumes the value zero on some points of $\Omega$, which means that the material has no longer viscoelastic properties in such a zone.

The main result of this paper is Theorem~\ref{thm:mainresult}, in which we show the existence of a weak solution to~\eqref{viscoelastic}. This is done in the more general case of linear elasticity, that is when the displacement is vector--valued and the elastic energy depends only on the symmetric part of its gradient. To this aim, we first perform a time discretization in the same spirit of~\cite{DM-Lar}, and then we pass to the limit as the time step goes to zero by relying on energy estimates; as a byproduct, we obtain the energy--dissipation inequality~\eqref{eq:enin}. By using the change of variables method implemented in~\cite{NS,DM-Luc}, we also prove a uniqueness result, but only in dimension $d=2$ and when $\Psi(t)$ vanishes on a neighborhood of the tip of $\Gamma_t$.

We complete our work by providing an example in $d=2$ of a weak solution to~\eqref{viscoelastic} for which the fracture can grow while balancing the energy. More precisely, when the cracks $\Gamma_t$ move with constant speed along the $x_1$--axis and $\Psi(t)$ is zero in a neighborhood of the crack tip, we construct a function $u$ which solves~\eqref{viscoelastic} and satisfies 
\begin{equation}\label{Psi_en_bal}
    \mathcal E(t)+\int_0^t\int_\Omega|\Psi\nabla\dot u|^2\,\de x\,\de s+\mathcal H^1(\Gamma_t\setminus\Gamma_0)=\mathcal E(0)+\text{work of external forces}.
\end{equation}
Notice that this is the natural extension of Griffith's dynamic criterion~\eqref{Gr_en_diss} to this setting.

The paper is organized as follows. In Section~\ref{visc_sec1} we fix the notation adopted throughout the paper, we list the standard assumptions on the family of cracks $\{\Gamma_t\}_{t\in[0,T]}$ and on the function $\Psi$, and we specify the notion of weak solution to problem~\eqref{viscoelastic}. In Section~\ref{visc_sec2} we state our main existence result (Theorem~\ref{thm:mainresult}), and we implement the time discretization method. We conclude the proof of Theorem~\ref{thm:mainresult} in Section~\ref{visc_sec3}, where we show the validity of the initial conditions and the energy--dissipation inequality~\eqref{eq:enin}. Section~\ref{visc_sec4} deals with uniqueness: under stronger regularity assumptions on the cracks sets, in Theorem~\ref{thm:uniq} we prove the uniqueness of a weak solution, but only when the space dimension is $d=2$. To this aim, we assume also that the function $\Psi$ is zero in a neighborhood of the crack tip. We conclude with Section~\ref{visc_sec5}, where in dimension $d=2$ we show an example of a moving crack that satisfies Griffith's dynamic energy--dissipation balance~\eqref{Psi_en_bal}.
	
\section{Notation and Preliminary Results}\label{visc_sec1}

The space of $m\times d$ matrices with real entries is denoted by $\R^{m\times d}$; in case $m=d$, the subspace of symmetric matrices is denoted by $\R^{d\times d}_{sym}$. Given two vectors $v_1,v_2\in \R^d$, their Euclidean scalar product is denoted by $v_1\cdot v_2\in\R$ and their tensor product is denoted by $v_1\otimes v_2\in\R^{d\times d}$; we use $v_1\odot v_2\in \R^{d\times d}_{sym}$ to denote the symmetric part of $v_1\otimes v_2$, namely $v_1\odot v_2:=\frac{1}{2}(v_1\otimes v_2+v_2\otimes v_1)$. 
Given $A\in\R^{m\times d}$, we use $A^T$ to denote its transpose; 
we use $A_1\cdot A_2\in\R$ to denote the Euclidean scalar product of two matrices $A_1,A_2\in\R^{d\times d}$.

The partial derivatives with respect to the variable $x_i$ are denoted by $\partial_i$. Given a function $f\colon \R^d\to\R^m$, we denote its Jacobian matrix by $\nabla f$, whose components are $(\nabla f)_{ij}:=\partial_j f_i$, $i=1,\dots,m$, $j=1,\dots,d$. For a tensor field $F\colon \R^d\to\R^{m\times d}$, by $\div F$ we mean the divergence of $F$ with respect to rows, namely $(\div F)_i:=\sum_{j=1}^d\partial_jF_{ij}$, for $i=1,\dots,m$. 

The $d$--dimensional Lebesgue measure is denoted by $\mathcal L^d$ and the $(d-1)$--dimensional Hausdorff measure by $\mathcal H^{d-1}$. We adopted standard notations for Lebesgue and Sobolev spaces on open subsets of $\R^d$; given an open set $\Omega\subseteq\mathbb R^d$ we use $\norm{\cdot}_\infty$ to denote the norm of $L^\infty(\Omega;\R^m)$. The boundary values of a Sobolev function are always intended in the sense of traces. Given a bounded open set $\Omega$ with Lipschitz boundary, we denote by $\nu$ the outer unit normal vector to $\partial\Omega$, which is defined $\mathcal H^{d-1}$--a.e. on the boundary.

Given a Banach space $X$, its norm is denoted by $\Vert\cdot\Vert_X$; if $X$ is an Hilbert space, we use $(\cdot,\cdot)_X$ to denote its scalar product. The dual space of $X$ is denoted by $X'$, and we use $\spr{\cdot}{\cdot}_{X'}$ to denote the duality product between $X'$ and $X$.  Given two Banach spaces $X_1$ and $X_2$, the space of linear and continuous maps from $X_1$ to $X_2$ is denoted by $\mathscr L(X_1;X_2)$; given $\mathbb A\in\mathscr L(X_1;X_2)$ and $u\in X_1$, we write $\mathbb A u\in X_2$ to denote the image of $u$ under $\mathbb A$. 

Given an open interval $(a,b)\subseteq\R$, $L^p(a,b;X)$ is the space of $L^p$ functions from $(a,b)$ to $X$. Given $u\in L^p(a,b;X)$, we denote by $\dot u\in\mathcal D'(a,b;X)$ its distributional derivative. The set of continuous functions from $[a,b]$ to $X$ is denoted by $C^0([a,b];X)$. Given a reflexive Banach space $X$, $C_w^0([a,b];X)$ is the set of weakly continuous functions from $[a,b]$ to $X$, namely
$$C_w^0([a,b];X):=\{u\colon [a,b]\to X: t\mapsto \spr{x'}{u(t)}_{X'}\text{ is continuous from $[a,b]$ to $\R$ for every }x'\in X'\}.$$

Let $T$ be a positive real number and let $\Omega\subset\R^d$ be a bounded open set with Lipschitz boundary. Let $\partial_D\Omega$ be a (possibly empty) Borel subset of $\partial\Omega$ and let $\partial_N\Omega$ be its complement. We assume the following hypotheses on the geometry of the cracks:
\begin{itemize}
\item[(E1)] $\Gamma\subset\overline\Omega$ is a closed set with $\mathcal L^d(\Gamma)=0$ and $\mathcal H^{d-1}(\Gamma\cap\partial\Omega)=0$;
\item[(E2)] for every $x\in\Gamma$ there exists an open neighborhood $U$ of $x$ in $\R^d$ such that $(U\cap\Omega)\setminus\Gamma$ is the union of two disjoint open sets $U^+$ and $U^-$ with Lipschitz boundary;
\item[(E3)] $\{\Gamma_t\}_{t\in[0,T]}$ is a family of closed subsets of $\Gamma$ satisfying $\Gamma_s\subset\Gamma_t$ for every $0\le s\le t\le T$.
\end{itemize}
Thanks (E1)--(E3) the space $L^2(\Omega\setminus\Gamma_t;\R^m)$ coincides with $L^2(\Omega;\R^m)$ for every $t\in[0,T]$ and $m\in\mathbb N$. In particular, we can extend a function $u\in L^2(\Omega\setminus\Gamma_t;\R^m)$ to a function in $L^2(\Omega;\R^m)$ by setting $u=0$ on $\Gamma_t$. Moreover, the trace of $u\in H^1(\Omega\setminus\Gamma)$ is well defined on $\partial\Omega$. Indeed, we may find a finite number of open sets with Lipschitz boundary $U_j\subset\Omega\setminus\Gamma$, $j=1,\dots m$, such that $\partial\Omega\setminus(\Gamma\cap\partial\Omega)\subset\cup_{j=1}^m\partial U_j$. Since $\mathcal H^{d-1}(\Gamma\cap\partial\Omega)=0$, there exists a constant $C>0$, depending only on $\Omega$ and $\Gamma$, such that
\begin{equation}\label{eq:vis_trace}
\norm{u}_{L^2(\partial\Omega)}\le C\norm{u}_{H^1(\Omega\setminus\Gamma)}\quad\text{for every }u\in H^1(\Omega\setminus\Gamma;\R^d).
\end{equation}
Similarly, we can find a finite number of open sets $U_j\subset\Omega\setminus\Gamma$, $j=1,\dots m$, with Lipschitz boundary, such that $\Omega\setminus\Gamma=\cup_{j=1}^m U_j$. By using second Korn's inequality in each $U_j$ (see, e.g.,~\cite[Theorem 2.4]{OSY}) and taking the sum over $j$ we can find a constant $C_K$, depending only on $\Omega$ and $\Gamma$, such that 
\begin{equation}\label{eq:hyp_korn}
\norm{\nabla u}_{L^2(\Omega;\R^{d\times d})}^2\le C_K\left(\norm{u}_{L^2(\Omega;\R^d)}^2+\norm{\E u}_{L^2(\Omega;\R^{d\times d}_{sym})}^2\right)\quad\text{for every }u\in H^1(\Omega\setminus\Gamma;\R^d),
\end{equation}
where $\E u$ is the symmetric part of $\nabla u$, i.e., $\E u\coloneqq \frac{1}{2}( \nabla u+ \nabla u^T)$.

For every $t\in[0,T]$ we define
\begin{equation*}
V_t:=\{u\in L^2(\Omega\setminus\Gamma_t;\R^d):\E u\in L^2(\Omega\setminus\Gamma_t;\R^{d\times d}_{sym})\}.
\end{equation*}
Notice that in the definition of $V_t$ we are considering only the distributional gradient of $u$ in $\Omega\setminus\Gamma_t$ and not the one in $\Omega$. The set $V_t$ is a Hilbert space with respect to the following norm
\begin{equation*}
\norm{u}_{V_t}:=(\norm{u}_{H}^2+\norm{\E u}_{H}^2)^{\frac{1}{2}}\quad\text{for every }u\in V_t.
\end{equation*}
To simplify our exposition, for every $m\in\mathbb N$ we set $H:=L^2(\Omega;\R^m)$ and $H_N:=L^2(\partial_N\Omega;\R^m)$; we always identify the dual of $H$ by $H$ itself and $L^2(0,T;L^2(\Omega;\R^m))$ by $L^2((0,T)\times\Omega;\R^m)$.

Thanks to~\eqref{eq:hyp_korn}, the space $V_t$ coincides with the usual Sobolev space $H^1(\Omega\setminus\Gamma_t;\R^d)$. 
Therefore, by~\eqref{eq:vis_trace}, it makes sense to consider for every $t\in[0,T]$ the set
\begin{equation*}
V_t^D:=\{u\in V_t\,:u=0\text{ on }\partial_D\Omega\},
\end{equation*}
which is a Hilbert space with respect to $\norm{\cdot}_{V_t}$. Moreover, by combining~\eqref{eq:hyp_korn} with~\eqref{eq:vis_trace}, we derive also the existence of a constant $C_{tr}>0$ such that 
\begin{equation}\label{trace}
\norm{u}_{H_N}\leq C_{tr}\norm{u}_{V_T}\quad\text{for every }u\in V_T.
\end{equation}

Let $\mathbb C,\mathbb B\colon \Omega\to \mathscr L(\R^{d\times d}_{sym};\R^{d\times d}_{sym})$ be two fourth-order tensors satisfying:
\begin{align}
&\mathbb C_{ijhk},\mathbb B_{ijhk}\in L^\infty(\Omega)\quad\text{for every $i,j,h,k=1,\dots,d$},\label{CB1}\\
& \mathbb C(x)\eta_1\cdot\eta_2=\eta_1\cdot \mathbb C(x)\eta_2,\quad\mathbb B(x)\eta_1\cdot\eta_2=\eta_1\cdot \mathbb B(x)\eta_2\quad\text{for a.e. $x\in\Omega$ and for every $\eta_1,\eta_2\in\R_{sym}^{d\times d}$},\\
&\mathbb C(x)\eta\cdot\eta\ge \lambda_1|\eta|^2,\quad\mathbb B(x)\eta\cdot\eta\ge \lambda_2|\eta|^2\quad\text{for a.e. $x\in\Omega$ and for every $\eta\in\R_{sym}^{d\times d}$}\label{CB2},
\end{align}
for two positive constants $\lambda_1,\lambda_2$ independent of $x$. Consider a function $\Psi\colon(0,T)\times\Omega\to\R$ satisfying 
\begin{equation}\label{Psi}
\Psi\in L^\infty ((0,T)\times\Omega),\quad \nabla\Psi\in L^\infty((0,T)\times\Omega;\R^d).
\end{equation} 
Given $f\in L^2(0,T;H)$, $w\in H^2(0,T;H)\cap H^1(0,T;V_0)$, $g\in H^1(0,T;H_N)$, $u^0\in V_0$ with $u^0-w(0)\in V_0^D$, and $u^1\in H$, we want to find a solution to the viscoelastic dynamic system
\begin{equation}\label{visco_elasto_system}
    \ddot u(t)-\div(\mathbb C\E u(t))-\div(\Psi^2(t)\mathbb B \E \dot u(t))=f(t)\quad\text{in $\Omega\setminus\Gamma_t$, $t\in(0,T)$},
\end{equation}
satisfying the following boundary and initial conditions
\begin{align}
&u(t)=w(t) & &\hspace{-2.4cm}\text{on $\partial_D\Omega$, $t\in(0,T)$},\label{boundary0} & &\\
&(\mathbb C\E u(t)+\Psi^2(t)\mathbb B \E \dot u(t))\nu=g(t) & &\hspace{-2.4cm}\text{on $\partial_N\Omega$,  $t\in(0,T)$}, & &\label{boundary1}\\
& (\mathbb C\E u(t)+\Psi^2(t)\mathbb B \E \dot u(t))\nu=0 & &\hspace{-2.4cm}\text{on $\Gamma_t$, \quad $t\in(0,T)$},\label{boundary} & & \\
&   u(0)=u^0,\quad \dot u(0)=u^1.\label{initials} 
\end{align}
As usual, the Neumann boundary conditions are only formal, and their meaning will be specified in Definition~\ref{def:weaksol}.

Throughout the paper we always assume that the family $\{\Gamma_t\}_{t\in[0,T]}$  satisfies (E1)--(E3), as well as $\mathbb C$, $\mathbb B$, $\Psi$, $f$, $w$, $g$, $u^0$, and $u^1$ the previous hypotheses. Let us define the following functional spaces:
\begin{align*}
&\mathcal V:=\{\varphi\in L^2(0,T;V_T):\dot\varphi\in L^2(0,T;H),\,\varphi(t)\in V_t\text{ for a.e. $t\in(0,T)$}\},\\
&\mathcal V^D:=\{\varphi\in\mathcal V:\varphi(t)\in V_t^D\text{ for a.e. $t\in(0,T)$}\},\\
&\mathcal W:=\{u\in\mathcal V:\Psi \dot u\in L^2(0,T;V_T),\,\Psi(t)\dot u(t)\in V_t\text{ for a.e. $t\in(0,T)$} \}.
\end{align*}

\begin{remark}
In the classical viscoelastic case, namely when $\Psi$ is identically equal to $1$, the solution $u$ to system~\eqref{visco_elasto_system} has derivative $\dot u(t)\in V_t$ for a.e. $t\in(0,T)$ with $\E \dot u\in L^2(0,T;H)$. For a generic $\Psi$ we expect to have $\Psi \E \dot u\in L^2(0,T;H)$. Therefore $\mathcal W$ is the natural setting where looking for a solution to~\eqref{visco_elasto_system}. Indeed, from a distributional point of view we have 
$$\Psi (t)\E \dot u(t)=\E (\Psi(t) \dot u(t))-\nabla\Psi(t)\odot \dot u(t)\quad\text{in $\mathcal D'(\Omega\setminus\Gamma_t;\R^{d\times d}_{sym})$ for a.e. $t\in(0,T)$},$$
and $\E (\Psi\dot u),\nabla\Psi\odot \dot u\in L^2(0,T;H)$ if $u\in\mathcal W$, thanks to~\eqref{Psi}.
\end{remark}

\begin{remark}
The set $\mathcal W$ coincides with the space of functions $u\in H^1(0,T;H)$ such that $u(t)\in V_t$ and $\Psi(t)\dot u(t)\in V_t$ for a.e. $t\in(0,T)$, and satisfying 
\begin{equation}\label{secverW}
\int_0^T \norm{u(t)}^2_{V_t}+\norm{\Psi(t)\dot u(t)}_{V_t}^2 \,\de t<\infty.
\end{equation}
This is a consequence of the strong measurability of the maps $t\mapsto u(t)$ and $t\mapsto \Psi(t)\dot u(t)$ from $(0,T)$ into $V_T$, which gives that~\eqref{secverW} is well defined and $u,\Psi\dot u\in L^2(0,T;V_T)$. To prove the strong measurability of these two maps, it is enough to observe that $V_T$ is a separable Hilbert space and that the maps $t\mapsto \dot u(t)$ and $t\mapsto \Psi(t)\dot u(t)$ from $(0,T)$ into $V_T$ are weakly measurable. Indeed, for every $\varphi\in C^\infty_c(\Omega\setminus\Gamma_T)$ the maps
\begin{align*}
&t\mapsto \int_{\Omega\setminus\Gamma_T}\E  u(t,x) \varphi(x)\,\de x=-\int_{\Omega\setminus\Gamma_T} u(t,x)\odot\nabla \varphi(x)\,\de x,\\
&t\mapsto \int_{\Omega\setminus\Gamma_T}\E (\Psi(t,x)\dot u(t,x))\varphi(x)\,\de x=-\int_{\Omega\setminus\Gamma_T}\Psi(t,x)\dot u(t,x)\odot\nabla \varphi(x)\,\de x
\end{align*}
are measurable from $(0,T)$ into $\R$, and $C^\infty_c(\Omega\setminus\Gamma_T)$ is dense in $L^2(\Omega)$.
\end{remark}

\begin{lemma}
The spaces $\mathcal V$ and $\mathcal W$ are Hilbert spaces with respect to the following norms:
\begin{align*}
    &\norm{\varphi}_{\mathcal V}:=(\norm{\varphi}^2_{L^2(0,T;V_T)}+\norm{\dot \varphi}^2_{L^2(0,T;H)})^{\frac{1}{2}}\quad\text{for every }\varphi\in\mathcal V,\\
    &\norm{u}^2_{\mathcal W}:=(\norm{u}_{\mathcal V}+\norm{\Psi\dot u}^2_{L^2(0,T;V_T)})^{\frac{1}{2}}\quad\text{for every }u\in\mathcal W.
\end{align*}
Moreover, $\mathcal V^D$ is a closed subspace of $\mathcal V$.
\end{lemma}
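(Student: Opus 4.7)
The plan is to realize each of the three spaces as a closed subspace of a natural ambient Hilbert space, so that completeness comes for free. For $\mathcal V$ the ambient space is the Bochner--Sobolev space of all $\varphi\in L^2(0,T;V_T)$ with $\dot\varphi\in L^2(0,T;H)$, equipped with the proposed norm; this is a Hilbert space by standard arguments. To see that $\mathcal V$ is closed in it, let $\varphi_n\to\varphi$ in that norm with $\varphi_n(t)\in V_t$ for a.e.\ $t$. Since $\varphi_n\to\varphi$ in $L^2(0,T;V_T)$, after passing to a subsequence (not relabeled) we have $\varphi_n(t)\to\varphi(t)$ in $V_T$ for a.e.\ $t\in(0,T)$. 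It then suffices to prove the auxiliary fact that $V_t$ is a closed subspace of $V_T$ for every $t\in[0,T]$.

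For that auxiliary fact, suppose $u_n\in V_t$ with $u_n\to u$ in $V_T$. Since $\mathcal L^d(\Gamma_T)=0$ by (E1), the relevant $L^2$-spaces all coincide, and we have $u_n\to u$ in $L^2(\Omega;\R^d)$ and $\E u_n\to \E u$ in $L^2(\Omega;\R^{d\times d}_{sym})$. Passing to the limit in the distributional identity
\[
\int_{\Omega\setminus\Gamma_t}\E u_n\cdot\varphi\,\de x=-\int_{\Omega\setminus\Gamma_t}u_n\cdot\div\varphi\,\de x\quad\text{for every }\varphi\in C^\infty_c(\Omega\setminus\Gamma_t;\R^{d\times d}_{sym})
\]
shows that the $L^2$ function $\E u$ realizes the distributional symmetric gradient of $u$ on the larger open set $\Omega\setminus\Gamma_t$, whence $u\in V_t$. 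The space $\mathcal V^D$ is handled in the same fashion: the additional condition $\varphi(t)=0$ on $\partial_D\Omega$ is preserved in the $V_T$-limit thanks to the continuity of the trace operator guaranteed by~\eqref{eq:vis_trace}.

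For $\mathcal W$ the strategy is to view it as the preimage of a closed set under the linear isometry $\iota\colon u\mapsto(u,\Psi\dot u)$ into $\mathcal V\times L^2(0,T;V_T)$ (the stated $\mathcal W$-norm is exactly the graph norm of $\iota$, modulo the evident typographical slip). Given a Cauchy sequence $u_n$ in $\mathcal W$, one gets $u_n\to u$ in $\mathcal V$ and $\Psi\dot u_n\to v$ in $L^2(0,T;V_T)$ for some $v$. Since $\Psi\in L^\infty$ by~\eqref{Psi}, multiplication by $\Psi$ is continuous on $L^2(0,T;H)$, so $\Psi\dot u_n\to\Psi\dot u$ in $L^2(0,T;H)$; combined with the continuous inclusion $L^2(0,T;V_T)\hookrightarrow L^2(0,T;H)$ this forces $v=\Psi\dot u$. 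Extracting a further subsequence with $\Psi(t)\dot u_n(t)\to\Psi(t)\dot u(t)$ in $V_T$ for a.e.\ $t$ and invoking the auxiliary closedness of $V_t$ in $V_T$ yields $\Psi(t)\dot u(t)\in V_t$ a.e., so $u\in\mathcal W$. The main subtle point throughout is precisely that auxiliary closedness, which requires passing the distributional symmetric gradient identity across the change of domain from $\Omega\setminus\Gamma_T$ to the larger $\Omega\setminus\Gamma_t$; everything else is a routine application of the Bochner space machinery.
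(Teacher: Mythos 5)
Your proposal is correct and follows essentially the same route as the paper: take a Cauchy (equivalently, convergent) sequence in the ambient Bochner space, extract a subsequence converging pointwise a.e.\ in $V_T$, and conclude that the constraints $\varphi(t)\in V_t$, $\Psi(t)\dot u(t)\in V_t$, and $\varphi(t)\in V_t^D$ are preserved in the limit. The one place where you add something is that you state and prove explicitly the auxiliary fact that $V_t$ is closed in $V_T$ (via passage to the limit in the distributional symmetric-gradient identity against test functions in $C^\infty_c(\Omega\setminus\Gamma_t;\R^{d\times d}_{sym})$), whereas the paper simply uses this implicitly when it deduces $\varphi(t)\in V_t$ from $\varphi_{k_j}(t)\to\varphi(t)$ in $V_T$; making it explicit is a mild but genuine clarification, not a different method.
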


\begin{proof}
It is clear that $\norm{\cdot}_{\mathcal V}$ and $\norm{\cdot}_{\mathcal W}$ are norms on $\mathcal V$ and $\mathcal W$ induced by scalar products. We just have to check the completeness of such spaces with respect to these norms. 

Let $\{\varphi_k\}_k\subset \mathcal V$ be a Cauchy sequence. Then, $\{\varphi_k\}_k$ and $\{\dot \varphi_k\}_k$ are Cauchy sequences, respectively, in $L^2(0,T;V_T)$ and $L^2(0,T;H)$, which are complete Hilbert spaces. Thus there exists $\varphi\in L^2(0,T;V_T)$ with $\dot \varphi\in L^2(0,T;H)$ such that $\varphi_k\to \varphi$ in $L^2(0,T;V_T)$ and $\dot \varphi_k\to\dot \varphi$ in $L^2(0,T;H)$. In particular there exists a subsequence $\{\varphi_{k_j}\}_j$ such that $\varphi_{k_j}(t)\to \varphi(t)$ in $V_T$ for a.e. $t\in(0,T)$. Since $\varphi_{k_j}(t)\in V_t$ for a.e. $t\in(0,T)$ we deduce that $\varphi(t)\in V_t$ for a.e. $t\in(0,T)$. Hence $\varphi\in\mathcal V$ and $\varphi_k\to \varphi$ in $\mathcal V$. With a similar argument, we can prove that $\mathcal V^D\subset \mathcal V$ is a closed subspace.

Let us now consider a Cauchy sequence $\{u_k\}_k\subset \mathcal W$. We have that $\{u_k\}_k$ and $\{\Psi\dot u_k\}_k$ are Cauchy sequences, respectively, in $\mathcal V$ and $L^2(0,T;V_T)$,  which are complete Hilbert spaces. Thus there exist two functions $u\in \mathcal V$ and $z\in L^2(0,T;V_T)$ such that $u_k\to u$ in $\mathcal V$ and $\Psi\dot u_k\to z$ in $L^2(0,T;V_T)$. Since $\dot u_k\to \dot u$ in $L^2(0,T;H)$ and $\Psi\in L^\infty((0,T)\times\Omega)$, we also have that $\Psi\dot u_k\to\Psi\dot u$ in $L^2(0,T;H)$, which gives that $z=\Psi\dot u$. Finally let us prove that $\Psi(t)\dot u(t)\in V_t$ for a.e. $t\in(0,T)$. By the fact that $\Psi\dot u_k\to\Psi\dot u$ in $L^2(0,T;V_T)$, there exists a subsequence $\{\Psi\dot u_{k_j}\}_j$ such that $\Psi(t)\dot u_{k_j}(t)\to \Psi(t)\dot u(t)$ in $V_T$ for a.e. $t\in(0,T)$. Since $\Psi(t)\dot u_{k_j}(t)\in V_t$ for a.e. $t\in(0,T)$ we deduce that $\Psi(t)\dot u(t)\in V_t$ for a.e. $t\in(0,T)$. Hence $u\in\mathcal W$ and $u_k\to u$ in $\mathcal W$.
\end{proof}

We are now in position to define a weak solution to~\eqref{visco_elasto_system}--\eqref{boundary}.

\begin{definition}[Weak solution]\label{def:weaksol}
We say that $u\in\mathcal W$ is a {\it weak solution} to system~\eqref{visco_elasto_system} with boundary conditions~\eqref{boundary0}--\eqref{boundary} if $u-w\in\mathcal V^D$ and 
\begin{equation}\label{weakform}
\begin{aligned}
    &-\int_0^T(\dot u(t),\dot\varphi(t))_H \,\de t+\int_0^T(\mathbb C\E u(t),\E \varphi(t))_{H} \,\de t+\int_0^T(\mathbb B\E (\Psi(t)\dot u(t)),\Psi(t)\E \varphi(t))_{H} \,\de t\\
    &-\int_0^T(\mathbb B\nabla\Psi(t)\odot \dot u(t),\Psi(t)\E \varphi(t))_{H} \,\de t =\int_0^T(f(t),\varphi(t))_H \,\de t+\int_0^T(g(t),\varphi(t))_{H_N} \,\de t
    \end{aligned}
\end{equation}
for every $\varphi\in\mathcal V^D$ such that $\varphi(0)=\varphi(T)=0$.
\end{definition}
Notice that the Neumann boundary conditions~\eqref{boundary1} and~\eqref{boundary} can be obtained from~\eqref{weakform}, by using integration by parts in space, only when $u(t)$ and $\Gamma_t$ are sufficiently regular.

\begin{remark}
If $\dot u$ is regular enough (for example $\dot u\in L^2(0,T;V_T)$ with $\dot u(t)\in V_t$ for a.e. $t\in(0,T)$), then we have $\Psi \E \dot u=\E (\Psi \dot u)-\nabla\Psi\odot \dot u$. Therefore~\eqref{weakform} is coherent with the strong formulation~\eqref{visco_elasto_system}. In particular, for a function $u\in\mathcal W$ we can define
\begin{equation}\label{Psiedotu}
        \Psi \E \dot u:=\E (\Psi \dot u)-\nabla\Psi\odot \dot u\in L^2(0,T;H),
\end{equation}
so that equation~\eqref{weakform} can be rephrased as 
\begin{equation*}
\begin{aligned}
    &-\int_0^T(\dot u(t),\dot\varphi(t))_H \,\de t+\int_0^T(\mathbb C\E u(t),\E \varphi(t))_{H} \,\de t
    +\int_0^T(\mathbb B\Psi(t)\E \dot u(t),\Psi(t)\E \varphi(t))_{H} \,\de t\\
    &=\int_0^T(f(t),\varphi(t))_H \,\de t+\int_0^T(g(t),\varphi(t))_{H_N} \,\de t
    \end{aligned}
\end{equation*}
for every $\varphi\in\mathcal V^D$ such that $\varphi(0)=\varphi(T)=0$.
\end{remark}

\begin{definition}[Initial conditions]
We say that $u\in\mathcal W$ satisfies the initial conditions~\eqref{initials} if
\begin{equation}\label{icd}
    \lim_{h\to 0^+}\frac{1}{h}\int_0^h(\norm{u(t)-u^0}_{V_t}^2+\norm{\dot u(t)-u^1}_H^2) \,\de t=0.
\end{equation}
\end{definition}

\section{Existence}\label{visc_sec2}
We now state our main existence result, whose proof will be given at the end of Section~\ref{visc_sec3}.

\begin{theorem}\label{thm:mainresult}
	There exists a weak solution $u\in\mathcal W$ to~\eqref{visco_elasto_system}--\eqref{boundary} satisfying the initial conditions $u(0)=u^0$ and $\dot u(0)=u^1$ in the sense of~\eqref{icd}. Moreover $u\in C_w([0,T];V_T)$, $\dot u\in C_w([0,T];H)\cap H^1(0,T;(V_0^D)')$, and
	\begin{equation*}
	\lim_{t\to 0^+}u(t)= u^0\text{ in $V_T$},\quad\lim_{t\to 0^+}\dot u(t)=u^1\text{ in $H$}.
	\end{equation*}
\end{theorem}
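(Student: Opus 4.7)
The plan is to follow the Rothe-type time-discretization strategy announced in the introduction, modelled on~\cite{DM-Lar}. Fix $n\in\mathbb N$, set $\tau_n:=T/n$ and $t_n^k:=k\tau_n$, let $u_n^0:=u^0$ and $u_n^1:=u^0+\tau_n u^1$, and for $k\ge 1$ define $u_n^{k+1}$ as the unique element of $V_{t_n^{k+1}}$ with $u_n^{k+1}-w(t_n^{k+1})\in V_{t_n^{k+1}}^D$ solving
\begin{equation*}
\begin{aligned}
&\Big(\tfrac{u_n^{k+1}-2u_n^k+u_n^{k-1}}{\tau_n^2},\varphi\Big)_{H}+(\mathbb C\E u_n^{k+1},\E\varphi)_H+\Big(\Psi^2(t_n^{k+1})\mathbb B\E\tfrac{u_n^{k+1}-u_n^k}{\tau_n},\E\varphi\Big)_{H}\\
&\qquad=(f_n^{k+1},\varphi)_H+(g_n^{k+1},\varphi)_{H_N}\qquad\text{for every }\varphi\in V_{t_n^{k+1}}^D,
\end{aligned}
\end{equation*}
where $f_n^{k+1}$ and $g_n^{k+1}$ are suitable averages of $f$ and $g$ on $(t_n^k,t_n^{k+1})$. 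Monotonicity~(E3) gives $V_{t_n^j}^D\subset V_{t_n^{k+1}}^D$ for $j\le k+1$, so earlier iterates remain admissible test functions; existence and uniqueness of $u_n^{k+1}$ follow from Lax--Milgram on $V_{t_n^{k+1}}^D$, with coercivity coming from the ellipticity of $\mathbb C$ and Korn's inequality~\eqref{eq:hyp_korn}.

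I then assemble the piecewise-constant interpolant $u_n^+(t):=u_n^{k+1}$ on $(t_n^k,t_n^{k+1}]$ and the piecewise-affine interpolant $\tilde u_n$ of the sequence $\{u_n^k\}$, with analogous interpolants for $w$, $f$, $g$, $\Psi$. Testing the discrete equation with the increment $(u_n^{k+1}-u_n^k)/\tau_n-(w(t_n^{k+1})-w(t_n^k))/\tau_n$ (which kills the Dirichlet datum on $\partial_D\Omega$) and summing over $k=1,\dots,j$, a discrete integration-by-parts identity combined with Cauchy--Schwarz, the bounds $\|\Psi\|_\infty,\|\nabla\Psi\|_\infty<\infty$, the regularity of $w$, and Gr\"onwall's lemma produces uniform a priori bounds
\begin{equation*}
\sup_{t\in[0,T]}\|\dot{\tilde u}_n(t)\|_H+\sup_{t\in[0,T]}\|\E u_n^+(t)\|_H+\Big(\int_0^T\|\Psi(t)\E\dot{\tilde u}_n(t)\|_H^2\,\de t\Big)^{1/2}\le C,
\end{equation*}
where the last term is understood via formula~\eqref{Psiedotu}. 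These energy estimates are the heart of the argument and explain why one must track the single product $\Psi\E\dot u$ rather than the two factors separately.

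From these bounds and the completeness of $\mathcal W$ proved in the previous lemma, a standard weak-compactness extraction yields, along a subsequence, a limit $u$ with $\tilde u_n\rightharpoonup u$ in $L^2(0,T;V_T)$, $\dot{\tilde u}_n\rightharpoonup\dot u$ in $L^2(0,T;H)$, and $\Psi\E\dot{\tilde u}_n\rightharpoonup\Psi\E\dot u$ in $L^2(0,T;H)$. The constraint $u-w\in\mathcal V^D$ is inherited because each $V_s^D$ is weakly closed in $V_T$; the pointwise-in-time condition $u(t)\in V_t$ follows from $u_n^+(t)\in V_{t_n^{k+1}}\subset V_s$ for every $s>t$ once $\tau_n<s-t$, together with a right-continuity argument based on~(E3). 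Passing to the limit in the discrete equation tested against any $\varphi\in\mathcal V^D$ with $\varphi(0)=\varphi(T)=0$ then yields~\eqref{weakform}.

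Finally, the refined regularity $\dot u\in H^1(0,T;(V_0^D)')$ and the weak continuity $u\in C_w([0,T];V_T)$, $\dot u\in C_w([0,T];H)$ are read off by rewriting~\eqref{weakform} as an equation for $\ddot u$ in $(V_0^D)'$ together with a Lions--Magenes-type continuity lemma. To obtain~\eqref{icd} and the strong convergences $u(t)\to u^0$ in $V_T$ and $\dot u(t)\to u^1$ in $H$ as $t\to 0^+$, the plan is to prove the energy--dissipation inequality~\eqref{eq:enin} by weak lower semicontinuity in the passage to the limit, and then match it against the initial energy to upgrade weak to strong convergence at $t=0$, a standard trick from the wave-equation setting. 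The genuine obstacle is the time-dependent constraint $u(t)\in V_t$: the test-function space $\mathcal V^D$ is strictly smaller than $L^2(0,T;V_T^D)$, and preserving membership both in the discrete construction and in the limit requires the monotonicity hypothesis (E3) to be used carefully at every step. The possibly degenerate $\Psi$ is a secondary difficulty, handled cleanly by treating $\Psi\E\dot u$ as a single object via~\eqref{Psiedotu}.
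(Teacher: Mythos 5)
Your outline follows the paper's strategy exactly (Rothe time discretization, Lax--Milgram at each step, discrete energy estimate plus Gr\"onwall, weak compactness in the space $\mathcal W$, passage to the limit, bound on $\ddot u$ in $(V_0^D)'$, weak continuity, and the upgrade to strong convergence at $t=0$ via the energy--dissipation inequality), but two concrete steps are off.

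First, your discretization starts from $u_n^1:=u^0+\tau_n u^1$ and solves for $u_n^{k+1}$ only for $k\ge 1$. The very first viscosity term you write down, $\Psi^2(t_n^2)\mathbb B\,\E\bigl((u_n^2-u_n^1)/\tau_n\bigr)$, then involves $\E u_n^1=\E u^0+\tau_n\E u^1$, and $\E u^1$ need not exist since the hypothesis is only $u^1\in H$; the same problem also makes the requirement $u_n^1-w(\tau_n)\in V_{\tau_n}^D$ meaningless. The paper avoids this by introducing a fictitious $u_n^{-1}:=u^0-\tau_n u^1$ which enters the scheme only through $\delta u_n^0=u^1$ inside the $H$-inner product $(\delta^2 u_n^1,v)_H$, never through a gradient; the first Lax--Milgram step produces $u_n^1\in V_{\tau_n}$, so that $\E\delta u_n^1=\E(u_n^1-u^0)/\tau_n$ is well defined. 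Your convention needs to be replaced by this one (or by some other device that keeps $u^1$ out of any gradient term).

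Second, your argument for the constraint $u(t)\in V_t$ a.e.\ does not close. From $u_n^+(t)\in V_{t_n^{k+1}}\subset V_s$ whenever $\tau_n<s-t$ and weak closedness of $\{v\in L^2(0,s;V_T):v(\cdot)\in V_s\text{ a.e.}\}$ you can only conclude $u(t)\in\bigcap_{s>t}V_s$ for a.e.\ $t$, and this intersection is in general strictly larger than $V_t$: (E3) is pure monotonicity and carries no right-continuity, so there is no ``right-continuity argument based on (E3)''. The paper's fix is to introduce the left piecewise-constant interpolant $u_n^-(t):=u_n^{k-1}$ on $[(k-1)\tau_n,k\tau_n)$ (and likewise $\Psi_n^-\tilde u_n^-$), which satisfies $u_n^-(t)\in V_{(k-1)\tau_n}\subset V_t$ by construction, show $u_n^-\rightharpoonup u$ in $L^2(0,T;V_T)$, and then use weak closedness of the convex set $F=\{v\in L^2(0,T;V_T):v(t)\in V_t\text{ a.e.}\}$ directly. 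You need this left interpolant (or an equivalent device) rather than $u_n^+$ for this step. The remaining steps of your plan are essentially the paper's; in particular, the $(V_0^D)'$-bound on $\ddot u$ is cleanest obtained at the discrete level by testing~\eqref{unk} with $v\in V_0^D\subset V_n^k$, rather than from the limit equation~\eqref{weakform}, since the time-dependent test class $\mathcal V^D$ makes a direct integration by parts in the limit awkward.
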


To prove the existence of a weak solution to~\eqref{visco_elasto_system}--\eqref{boundary}, we use a time discretization scheme in the same spirit of~\cite{DM-Lar}.
Let us fix $n\in\mathbb N$ and set
\begin{equation*}
    \tau_n:=\frac{T}{n},\quad u_n^0:=u^0,\quad u_n^{-1}:=u^0-\tau_nu^1. 
\end{equation*}
We define
\begin{align*}
    &V_n^k:=V_{k\tau_n}^D,\quad g_n^k:=g(k\tau_n),\quad w_n^k:=w(k\tau_n)\quad\text{for }k=0,\dots,n,\\
    &f_n^k:=\frac{1}{\tau_n}\int_{(k-1)\tau_n}^{k\tau_n} f(s)\,\de s,\quad \Psi_n^k:=\frac{1}{\tau_n}\int_{(k-1)\tau_n}^{k\tau_n}\Psi(s)\,\de s,\quad\delta g_n^k:=\frac{g_n^k-g_n^{k-1}}{\tau_n}\quad \text{for }k=1,\dots,n,\\
    &\delta w_n^0:=\dot w(0),\quad\delta w_n^k:=\frac{w_n^k- w_n^{k-1}}{\tau_n},\quad \delta^2 w_n^k:=\frac{\delta w_n^k-\delta w_n^{k-1}}{\tau_n}\quad \text{for }k=1,\dots,n.
\end{align*}
For every $k=1,\dots,n$ let $u_n^k\in V_T$, with $u_n^k-w_n^k\in V_n^k$, be the solution to 
\begin{equation}\label{unk}
(\delta^2u_n^k,v)_H+(\mathbb C\E u_n^k,\E v)_{H}+(\mathbb B\Psi_n^k \E \delta u_n^k,\Psi_n^k \E v)_{H}=(f_n^k,v)_H+(g_n^k,v)_{H_N}\quad\text{for every }v\in V_n^k,
\end{equation}
where 
\begin{align*}
    &\delta u_n^k:=\frac{u_n^k- u_n^{k-1}}{\tau_n}\quad \text{for }k=0,\dots,n,\quad \delta^2u_n^k:=\frac{\delta u_n^k-\delta u_n^{k-1}}{\tau_n}\quad \text{for }k=1,\dots,n.
\end{align*}
The existence of a unique solution $u_n^k$ to~\eqref{unk} is an easy application of Lax--Milgram's theorem.

\begin{remark}
Since $\delta u_n^k\in V_{(k-1)\tau_n}$, then $\Psi_n^k\E \delta u_n^k=\E (\Psi_n^ku_n^k)-\nabla\Psi_n^k\odot u_n^k$, so that the discrete equation~\eqref{unk} is coherent with the weak formulation given in~\eqref{weakform}.
\end{remark}

In the next lemma, we show a uniform estimate for the family $\{u_n^k\}_{k=1}^n$ with respect to $n\in\mathbb N$ that will be used later to pass to the limit in the discrete equation~\eqref{unk}.

\begin{lemma}\label{lem:est}
There exists a constant $C>0$, independent of $n\in\mathbb N$, such that 
\begin{equation}\label{eq:est}
\max_{i=1,..,n}\|\delta u_n^i\|_H+\max_{i=1,..,n}\norm{\E u_n^i}_H+\sum_{i=1}^n \tau_n \norm{\Psi_n^i \E \delta u_n^i}_H^2\le C.
\end{equation}
\end{lemma}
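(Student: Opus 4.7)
\medskip

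My plan is to test the discrete equation~\eqref{unk} at step $k$ with the admissible variation $v=\tau_n(\delta u_n^k-\delta w_n^k)$ and then sum over $k$. Observe that this $v$ lies in $V_n^k$: indeed $u_n^k-w_n^k\in V_n^k=V_{k\tau_n}^D$ by construction, while $u_n^{k-1}-w_n^{k-1}\in V_n^{k-1}\subseteq V_n^k$ thanks to the monotonicity $\Gamma_{(k-1)\tau_n}\subseteq\Gamma_{k\tau_n}$ in assumption (E3). I would then exploit the standard discrete product identities
\begin{align*}
\tau_n(\delta^2 u_n^k,\delta u_n^k)_H&=\tfrac12\norm{\delta u_n^k}_H^2-\tfrac12\norm{\delta u_n^{k-1}}_H^2+\tfrac12\norm{\delta u_n^k-\delta u_n^{k-1}}_H^2,\\
\tau_n(\mathbb C\E u_n^k,\E\delta u_n^k)_H&=\tfrac12(\mathbb C\E u_n^k,\E u_n^k)_H-\tfrac12(\mathbb C\E u_n^{k-1},\E u_n^{k-1})_H+\tfrac12(\mathbb C(\E u_n^k-\E u_n^{k-1}),\E u_n^k-\E u_n^{k-1})_H,
\end{align*}
together with $\tau_n(\mathbb B\Psi_n^k\E\delta u_n^k,\Psi_n^k\E\delta u_n^k)_H\ge\lambda_2\tau_n\norm{\Psi_n^k\E\delta u_n^k}_H^2$. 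Summing from $k=1$ to $j$ and using coercivity of $\mathbb C$ and $\mathbb B$ yields the discrete energy identity
\[
\tfrac12\norm{\delta u_n^j}_H^2+\tfrac{\lambda_1}{2}\norm{\E u_n^j}_H^2+\lambda_2\sum_{k=1}^j\tau_n\norm{\Psi_n^k\E\delta u_n^k}_H^2\le\tfrac12\norm{u^1}_H^2+\tfrac12(\mathbb C\E u^0,\E u^0)_H+\mathcal R_n^j,
\]
where $\mathcal R_n^j$ collects the right-hand side contributions coming from $f$, $g$ and from the terms containing $\delta w_n^k$.

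The two contributions in $\mathcal R_n^j$ that are \emph{not} directly controllable by Cauchy--Schwarz are $\sum_{k=1}^j\tau_n(\delta^2 u_n^k,\delta w_n^k)_H$ and $\sum_{k=1}^j\tau_n(g_n^k,\delta u_n^k-\delta w_n^k)_{H_N}$, because they still carry derivatives of the unknown $u_n^k$. To handle them I would perform Abel summation (discrete integration by parts), e.g.
\[
\sum_{k=1}^j\tau_n(\delta^2 u_n^k,\delta w_n^k)_H=(\delta u_n^j,\delta w_n^j)_H-(u^1,\dot w(0))_H-\sum_{k=1}^{j-1}\tau_n(\delta u_n^k,\delta^2 w_n^{k+1})_H,
\]
and analogously
\[
\sum_{k=1}^j\tau_n(g_n^k,\delta u_n^k)_{H_N}=(g_n^j,u_n^j)_{H_N}-(g(0),u^0)_{H_N}-\sum_{k=1}^{j-1}\tau_n(\delta g_n^{k+1},u_n^k)_{H_N}.
\]
The boundary terms in these identities are then absorbed by using Young's inequality: a small fraction of $\norm{\delta u_n^j}_H^2$ and $\norm{\E u_n^j}_H^2$ is moved to the left-hand side, while $\norm{u_n^j}_H$ is controlled by $\norm{u^0}_H+T\max_{i\le j}\norm{\delta u_n^i}_H$ via the telescoping identity $u_n^j=u^0+\sum_{k=1}^j\tau_n\delta u_n^k$; the $(\cdot)_{H_N}$ terms are estimated through the trace inequality~\eqref{trace} combined with Korn's inequality~\eqref{eq:hyp_korn}.

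For the remaining terms in $\mathcal R_n^j$, namely those involving $(\mathbb C\E u_n^k,\E\delta w_n^k)_H$, $(\mathbb B\Psi_n^k\E\delta u_n^k,\Psi_n^k\E\delta w_n^k)_H$, $(f_n^k,\delta u_n^k-\delta w_n^k)_H$ and the discrete sums $\sum\tau_n(\delta u_n^k,\delta^2 w_n^{k+1})_H$, $\sum\tau_n(\delta g_n^{k+1},u_n^k)_{H_N}$, I would apply Cauchy--Schwarz followed by Young's inequality. The mixed viscous term is absorbed into $\lambda_2\tau_n\norm{\Psi_n^k\E\delta u_n^k}_H^2$ at the cost of a term $C\tau_n\norm{\E\delta w_n^k}_H^2$, which is summable because $w\in H^1(0,T;V_0)$; analogously $f\in L^2(0,T;H)$, $g\in H^1(0,T;H_N)$ and $w\in H^2(0,T;H)$ yield uniform bounds on $\sum\tau_n\norm{f_n^k}_H^2$, $\sum\tau_n\norm{\delta g_n^{k+1}}_{H_N}^2$ and $\sum\tau_n\norm{\delta^2 w_n^{k+1}}_H^2$ by Jensen's inequality, while $\norm{\Psi_n^k}_\infty\le\norm{\Psi}_\infty$ by~\eqref{Psi}.

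Putting everything together one arrives at an inequality of the form
\[
\tfrac14\norm{\delta u_n^j}_H^2+\tfrac{\lambda_1}{4}\norm{\E u_n^j}_H^2+\tfrac{\lambda_2}{2}\sum_{k=1}^j\tau_n\norm{\Psi_n^k\E\delta u_n^k}_H^2\le M+C\sum_{k=1}^{j-1}\tau_n\bigl(\norm{\delta u_n^k}_H^2+\norm{\E u_n^k}_H^2\bigr),
\]
with $M$ and $C$ independent of $n$ and $j$. An application of the discrete Gronwall lemma then delivers~\eqref{eq:est}. The main obstacle I anticipate is the careful bookkeeping of the Abel-summation step, in particular choosing the Young constants so that the quadratic boundary contribution $(\delta u_n^j,\delta w_n^j)_H$ and the viscous cross term are simultaneously absorbed into the left-hand side without spoiling coercivity; once this is set up, the remaining estimates are routine.
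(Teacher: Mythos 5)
Your proposal is correct and follows essentially the same route as the paper: test the discrete scheme with $v=\tau_n(\delta u_n^k-\delta w_n^k)$, apply the discrete product identities to produce telescoping energy terms, perform Abel summation on the $(\delta^2 u_n^k,\delta w_n^k)_H$ and $(g_n^k,\delta u_n^k)_{H_N}$ contributions, control $\norm{u_n^j}$ via telescoping plus the trace inequality, absorb boundary contributions by Young, and close with the discrete Gronwall lemma. The only slip is in your Abel-summation identity for the $\delta^2 u_n^k$ term: the correct form is $\sum_{k=1}^{j}\tau_n(\delta^2 u_n^k,\delta w_n^k)_H=(\delta u_n^j,\delta w_n^j)_H-(\delta u_n^0,\delta w_n^0)_H-\sum_{k=1}^{j}\tau_n(\delta u_n^{k-1},\delta^2 w_n^{k})_H$; your version drops the innocuous term $\tau_n(\delta u_n^0,\delta^2 w_n^1)_H$, which is uniformly bounded and does not affect the argument.
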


\begin{proof}
We fix $n\in\mathbb N$. To simplify the notation we set 
$$a(u,v):=(\mathbb C\E u,\E v)_{H},\quad b_n^k(u,v):=(\mathbb B\Psi_n^k \E u,\Psi_n^k \E v)_{H}\quad\text{for every $u,v\in V_T$}.$$ 
By taking as test function $v=\tau_n(\delta u_n^k-\delta w_n^k)\in V_n^k$ in~\eqref{unk}, for $k=1,\dots,n$ we obtain
\begin{align*}
&\|\delta u_n^k\|^2_H-(\delta u_n^{k-1},\delta u_n^{k})_H+a(u_n^k,u_n^k)-a(u_n^k,u_n^{k-1})+\tau_n b_n^k(\delta u_n^k,\delta u_n^k)=\tau_n L_n^k,
\end{align*}
where
\begin{equation*}
L_n^k:=(f_n^k,\delta u_n^k-\delta w_n^k)_H+(g_n^k,\delta u_n^k-\delta w_n^k)_{H_N}+(\delta^2 u_n^{k},\delta w_n^{k})_H+a(u_n^k,\delta w_n^k)+b_n^k(\delta u_n^k,\delta w_n^k).
\end{equation*}
Thanks to the following identities 
\begin{align*}
&\norm{\delta u_n^k}_H^2-(\delta u_n^{k-1},\delta u_n^k)_H=\frac{1}{2}\norm{\delta u_n^k}_H^2-\frac{1}{2}\norm{\delta u_n^{k-1}}_H^2+\frac{\tau_n^2}{2}\norm{\delta^2 u_n^k}_H^2,\\
&a(u_n^k,u_n^k)-a(u_n^k,u_n^{k-1})=\frac{1}{2}a(u_n^k,u_n^k)-\frac{1}{2}a(u_n^{k-1},u_n^{k-1})+\frac{\tau_n^2}{2}a(\delta u_n^k,\delta u_n^k),
\end{align*}
and by omitting the terms with $\tau_n^2$, which are non negative, we derive
\begin{equation*}
\frac{1}{2}\|\delta u_n^k\|^2_H-\frac{1}{2}\|\delta u_n^{k-1}\|^2_H+\frac{1}{2}a(u_n^k,u_n^k)-\frac{1}{2}a(u_n^{k-1},u_n^{k-1})+\tau_n b_n^k(\delta u_n^{k},\delta u_n^{k})\leq\tau_n L_n^k.
\end{equation*}
We fix $i\in\{1,\dots,n\}$ and sum over $k=1,\dots, i$ to obtain the following discrete energy inequality
\begin{equation}\label{grw}
\frac{1}{2}\|\delta u_n^i\|^2_H+\frac{1}{2}a(u_n^i,u_n^i)+\sum_{k=1}^i\tau_n b_n^k(\delta u_n^{k},\delta u_n^{k})\leq \mathcal{E}_0+\sum_{k=1}^i\tau_n L_n^k,
\end{equation}
where $\mathcal E_0:=\frac{1}{2}\norm{u^1}_H^2+\frac{1}{2}(\mathbb C\E u^0,\E u^0)_{H}$. Let us now estimate the right--hand side in~\eqref{grw} from above. By~\eqref{trace} and~\eqref{CB1} we have
\begin{align}
&\left|\sum_{k=1}^i \tau_n(f_n^k,\delta u_n^k-\delta w_n^k)_H\right| 
\leq \|f\|^2_{L^2(0,T;H)}+\frac{1}{2}\|\dot{w}\|^2_{L^2(0,T;H)}+\frac{1}{2}\sum_{k=1}^i \tau_n\|\delta u_n^k\|^2_H,\\
&\left|\sum_{k=1}^i \tau_n a(u_n^k,\delta w_n^k)\right|
\leq \frac{\|\mathbb{C} \|_\infty}{2}\| \dot{ w}\|^2_{L^2(0,T;V_0)}+\frac{\|\mathbb{C} \|_\infty}{2}\sum_{k=1}^i \tau_n\|\E u_n^k\|^2_H,\\
&\left|\sum_{k=1}^i \tau_n(g_n^k,\delta w_n^k)_{H_N}\right|
\leq \frac{1}{2}\|g\|^2_{L^2(0,T;H_N)}+\frac{C_{tr}^2}{2}\|\dot{w}\|^2_{L^2(0,T;V_0)}.
\end{align}
For the other term involving $g_n^k$, we perform the following discrete integration by parts 
\begin{align}
\sum_{k=1}^i \tau_n (g_n^k, \delta u_n^k)_{H_N}
&=(g_n^i,u_n^i)_{H_N}-(g(0),u^0)_{H_N}-\sum_{k=1}^{i}\tau_n (\delta g_n^{k}, u_n^{k-1})_{H_N}\label{gn}.
\end{align}
Hence for every $\epsilon\in(0,1)$, by using~\eqref{trace} and Young's inequality, we get
\begin{equation}\label{mod}
\begin{aligned}
\left|\sum_{k=1}^i \tau_n (g_n^k, \delta u_n^k)_{H_N}\right|
&\leq\frac{\epsilon}{2}\|u_n^i\|^2_{H_N}+\frac{1}{2\epsilon}\|g\|^2_{L^{\infty}(0,T;H_N)}+\norm{g(0)}_{H_N}\norm{u^0}_{H_N}+\sum_{k=1}^i \tau_n \|\delta g_n^k\|_{H_N} \|u_n^{k-1}\|^2_{H_N}\\
&\leq  C_\e+\frac{\epsilon C_{tr}^2}{2}\|u_n^i\|^2_{V_T}+\frac{C_{tr}^2}{2}\sum_{k=1}^{i} \tau_n \|u_n^{k}\|^2_{V_T},
\end{aligned}
\end{equation}
where $C_\epsilon$ is a positive constant depending on $\epsilon$. Thanks to Jensen's inequality we can write
\begin{equation*}
\|u_n^l\|^2_{V_T}\leq\|\E u_n^l\|^2_H+\left(\norm{u_0}_H+\sum_{j=1}^l\tau_n\norm{\delta u_n^j}_H\right)^2\leq\|\E u_n^l\|^2_H+ 2\norm{u^0}_H^2+2T\sum_{j=1}^l\tau_n\norm{\delta u_n^j}_H^2,
\end{equation*} 
so that~\eqref{mod} can be further estimated as
\begin{equation}
\begin{aligned}
\left|\sum_{k=1}^i \tau_n (g_n^k, \delta u_n^k)_{H_N}\right|&\leq C_\epsilon+\frac{\epsilon C_{tr}^2}{2}\left(\|\E u_n^i\|^2_H+ 2\norm{u^0}_H^2+2T\sum_{j=1}^i\tau_n\norm{\delta u_n^j}_H^2\right)\\
&\quad+\frac{C_{tr}^2}{2}\sum_{k=1}^{i} \tau_n \left(\|\E u_n^k\|^2_H+ 2\norm{u^0}_H^2+2T\sum_{j=1}^k\tau_n\norm{\delta u_n^j}_H^2\right)\\
&\le  \tilde C_\e+\frac{\epsilon C_{tr}^2}{2}\|\E u_n^i\|^2_H+\tilde C\sum_{k=1}^{i} \tau_n \left(\norm{\delta u_n^k}_H^2+\|\E u_n^k\|^2_H\right),
\end{aligned}
\end{equation}
for some positive constants $\tilde{C}_\epsilon$ and $\tilde{C}$, with $\tilde{C}_\epsilon$ depending on $\epsilon$. Similarly to~\eqref{gn}, we can say
\begin{equation}\label{dwn}
\sum_{k=1}^i \tau_n (\delta^2 u_n^k, \delta w_n^k)_{H}
=(\delta u_n^i,\delta w_n^i)_{H}-(\delta u_n^0,\delta w_n^0)_{H}-\sum_{k=1}^{i}\tau_n (\delta u_n^{k-1},\delta^2 w_n^{k})_{H},
\end{equation}
from which we deduce that for every $\e>0$
\begin{align}
\left|\sum_{k=1}^i \tau_n (\delta^2 u_n^k, \delta w_n^k)_{H}\right|
\leq & \|\delta u_n^i\|_H\|\delta w_n^i\|_H+\|u^1\|_H\|\dot w(0)\|_H+\sum_{k=1}^i \tau_n\|\delta u_n^{k-1}\|_H\|\delta^2 w_n^k\|_H\nonumber\\
\leq & \frac{1}{2\epsilon}\|\delta w_n^i\|^2_H+\frac{\epsilon}{2}\|\delta u_n^i\|^2_H+\|u^1\|_H\|\dot w(0)\|_H+\frac{1}{2}\sum_{k=1}^i \tau_n\|\delta u_n^{k-1}\|_H^2+\frac{1}{2}\sum_{k=1}^i \tau_n\|\delta^2 w_n^k\|_H^2\nonumber\\
\leq &\bar C_\e+\frac{\epsilon}{2}\|\delta u_n^i\|^2_H+\frac{1}{2}\sum_{k=1}^i \tau_n\|\delta u_n^{k}\|^2_H,
\end{align}
where $\bar C_\e$ is a positive constant depending on $\e$. We estimate from above the last term in right-hand side of~\eqref{grw} in the following way
\begin{equation}\label{6}
\begin{aligned}
\sum_{k=1}^i \tau_n b_n^k(\delta u_n^k,\delta w_n^k)&\leq \sum_{k=1}^i \tau_n(b_n^k(\delta u_n^k,\delta u_n^k))^{\frac{1}{2}}(b_n^k(\delta w_n^k,\delta w_n^k))^{\frac{1}{2}}
\\
&\leq \frac{1}{2}\sum_{k=1}^i \tau_n b_n^k(\delta u_n^k,\delta u_n^k)+\frac{1}{2}\norm{\mathbb B}_\infty\norm{\Psi}_\infty^2\norm{\dot w}_{L^2(0,T;V_0)}^2.
\end{aligned}
\end{equation}
By considering~\eqref{grw}--\eqref{6} and using~\eqref{CB2} we obtain
\begin{equation*}
\left(\frac{1-\epsilon}{2}\right)\|\delta u_n^i\|^2_H+\frac{\lambda_1-\epsilon C_{tr}^2}{2}\norm{\E u_n^i}_H^2+\frac{1}{2}\sum_{k=1}^i\tau_n b^k_n(\delta u_n^k,\delta u_n^k)\leq \hat C_\epsilon+\hat C\sum_{k=1}^i\tau_n\left(\|\delta u_n^k\|^2_H+ \|\E u_n^k\|^2_H\right)
\end{equation*} 
for two positive constants $\hat C_\epsilon$ and $\hat C$, with $\hat C_\e$ depending on $\epsilon$. We can now choose $\epsilon<\frac{1}{2}\min\left\{1,\frac{\lambda_1}{C_{tr}^2}\right\}$ to derive the following estimate
\begin{align}\label{last}
\frac{1}{4}\|\delta u_n^i\|^2_H+\frac{1}{4}\norm{\E u_n^i}_H^2+\frac{1}{2}\sum_{k=1}^i \tau_n b^k_n(\delta u_n^k,\delta u_n^k) \leq C_1+C_2\sum_{k=1}^i\tau_n\left(\|\delta u_n^k\|^2_H+ \|\E u_n^k\|^2_H\right),
\end{align}
where $C_1$ and $C_2$ are two positive constants depending only on $u^0$, $u^1$, $f$, $g$, and $w$.
Thanks to a discrete version of Gronwall's lemma (see, e.g.,~\cite[Lemma 3.2.4]{AGS}) we deduce the existence of a constant $C_3>0$, independent of $i$ and $n$, such that
\begin{equation*}
\|\delta u_n^i\|_H+\norm{\E u_n^i}_H\leq C_3 \quad \text{for every $i=1,\dots,n$ and for every $n\in\mathbb N$}.
\end{equation*}
By combining this last estimate with~\eqref{last} and~\eqref{CB2} we finally get~\eqref{eq:est} and we conclude.
\end{proof}

We now want to pass to the limit into the discrete equation~\eqref{unk} to obtain a weak solution to~\eqref{visco_elasto_system}--\eqref{boundary}. We start by defining the following approximating sequences of our limit solution
\begin{align*}
&u_n(t):=u_n^k+(t-k\tau_n)\delta u_n^k, &&\tilde{u}_n(t):=\delta u_n^k+(t-k\tau_n)\delta^2 u_n^k &&t\in [(k-1)\tau_n,k\tau_n],\, k=1,\dots,n,\\
&u^+_n(t):=u_n^k, &&\tilde{u}^+_n(t):=\delta u_n^k &&t\in ((k-1)\tau_n,k\tau_n],\, k=1,\dots,n,\\
&u^-_n(t):=u_n^{k-1}, &&\tilde{u}^-_n(t):=\delta u_n^{k-1}&&t\in [(k-1)\tau_n,k\tau_n),\, k=1,\dots,n.
\end{align*}
Notice that $u_n\in H^1(0,T;H)$ with $\dot u_n(t)=\delta u_n^k=\tilde{u}^+_n(t)$ for $t\in ((k-1)\tau_n,k\tau_n)$ and $k=1,\dots,n$. Let us approximate $\Psi$ and $w$ by 
\begin{align*}
&\Psi^+_n(t):=\Psi_n^k, && w^+_n(t):=w_n^k && t\in ((k-1)\tau_n,k\tau_n],\, k=1,\dots,n,\\
&\Psi^-_n(t):=\Psi_n^{k-1}, && w^-_n(t):=w_n^{k-1} && t\in [(k-1)\tau_n,k\tau_n),\, k=1,\dots,n.
\end{align*}

\begin{lemma}\label{lem:conv}
There exists a function $u\in\mathcal W$, with $u-w\in\mathcal V^D$, such that, up to a not relabeled subsequence
\begin{align}
&u_n \xrightharpoonup[n\to \infty]{H^1(0,T;H)}u,\quad u^\pm_n \xrightharpoonup[n\to \infty]{L^2(0,T;V_T)}u,\quad \tilde{u}^\pm_n \xrightharpoonup[n\to \infty]{L^2(0,T;H)}\dot{u},\label{pf1}\\
&\nabla \Psi^\pm_n\odot \tilde{u}^\pm_n\xrightharpoonup[n\to \infty]{L^2(0,T;H)}\nabla\Psi\odot \dot u,\quad \E (\Psi^\pm_n\tilde{u}^\pm_n)\xrightharpoonup[n\to \infty]{L^2(0,T;H)}\E (\Psi\dot u).\label{pf2}
\end{align}
\end{lemma}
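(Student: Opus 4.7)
The plan is to extract weakly convergent subsequences from the uniform bounds of Lemma~\ref{lem:est}, identify the limits by exploiting that the three interpolants of $\{u_n^k\}_k$ differ in $L^2(0,T;H)$ by quantities of order $\tau_n$, and then verify the space memberships by using that the relevant subsets of $L^2(0,T;V_T)$ are closed linear subspaces, hence weakly closed.

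Lemma~\ref{lem:est} yields at once that $\tilde u_n^\pm$ is bounded in $L^\infty(0,T;H)$, that $u_n^\pm$ and $u_n$ are bounded in $L^\infty(0,T;V_T)$ (using $\|u_n^k\|_H\le\|u^0\|_H+T\max_i\|\delta u_n^i\|_H$), and that $\Psi_n^+\E\tilde u_n^+$ is bounded in $L^2(0,T;H)$; the same $L^2$ bound extends to $\Psi_n^-\E\tilde u_n^-$ by a shift of indices. Since $\dot u_n=\tilde u_n^+$ a.e., $u_n$ is bounded in $H^1(0,T;H)$. Along a subsequence all these quantities converge weakly. On $[(k-1)\tau_n,k\tau_n]$ we have $u_n(t)-u_n^\pm(t)=O(\tau_n)\,\delta u_n^k$, so $\|u_n-u_n^\pm\|_{L^2(0,T;H)}\to 0$, which forces the weak limits of $u_n^\pm$ in $L^2(0,T;V_T)$ to coincide with the weak limit $u$ of $u_n$ in $H^1(0,T;H)$. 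Similarly $\tilde u_n^-(t)=\tilde u_n^+(t-\tau_n)$ for $t\ge\tau_n$, so both $\tilde u_n^\pm$ converge weakly to $\dot u$ in $L^2(0,T;H)$.

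To verify $u-w\in\mathcal V^D$ (which also yields $u\in\mathcal V$), I use that on $[(k-1)\tau_n,k\tau_n)$, $u_n^-(t)-w_n^-(t)=u_n^{k-1}-w_n^{k-1}\in V_n^{k-1}\subset V_t^D$ by the monotonicity of $V_\cdot^D$ inherited from (E3); the set $\{\varphi\in L^2(0,T;V_T):\varphi(t)\in V_t^D\text{ a.e.}\}$ is a closed linear (hence weakly closed) subspace by the argument used in the previous lemma's proof of completeness of $\mathcal V^D$, so the weak limit $u-w$ lies in it. For (pf2), assumption~\eqref{Psi} and the definition of $\Psi_n^\pm$ as time averages give $\Psi_n^\pm\to\Psi$ and $\nabla\Psi_n^\pm\to\nabla\Psi$ a.e.\ on $(0,T)\times\Omega$ with a uniform $L^\infty$ bound, hence strongly in every $L^p$, $p<\infty$. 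Products of these strongly convergent, $L^\infty$-bounded sequences with the weakly convergent $\tilde u_n^\pm$ converge weakly in $L^2(0,T;H)$, giving both $\nabla\Psi_n^\pm\odot\tilde u_n^\pm\rightharpoonup\nabla\Psi\odot\dot u$ and $\Psi_n^\pm\tilde u_n^\pm\rightharpoonup\Psi\dot u$. From the identity $\E(\Psi_n^\pm\tilde u_n^\pm)=\Psi_n^\pm\E\tilde u_n^\pm+\nabla\Psi_n^\pm\odot\tilde u_n^\pm$ and the $L^2(0,T;H)$ bound on each summand, the left-hand side is bounded in $L^2(0,T;H)$; closedness of the distributional operator $\E$ identifies its weak limit as $\E(\Psi\dot u)$. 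This delivers $\Psi\dot u\in L^2(0,T;V_T)$, while applying the same closed-subspace argument to $\Psi_n^-\tilde u_n^-(t)\in V_{(k-1)\tau_n}\subset V_t$ (multiplication by $\Psi_n^{k-1}\in W^{1,\infty}$ preserves $V_{(k-1)\tau_n}$ via the Leibniz rule) gives $\Psi(t)\dot u(t)\in V_t$ a.e., completing $u\in\mathcal W$.

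I expect the main subtlety to be the time-localized membership statements $u(t)\in V_t$ and $\Psi(t)\dot u(t)\in V_t$ for a.e.\ $t$: only the ``minus'' interpolants, which at time $t$ still live in the smaller space $V_{(k-1)\tau_n}\subset V_t$, can be used for this purpose, so one must resist the temptation to work with the ``plus'' interpolants and check that the $W^{1,\infty}$ factor $\Psi_n^{k-1}$ does not push the product out of $V_{(k-1)\tau_n}$.
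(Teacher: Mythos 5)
Your proposal is correct and follows essentially the same route as the paper: uniform bounds from Lemma~\ref{lem:est}, extraction of weak limits, identification of limits via the $O(\tau_n)$ discrepancy between interpolants and via strong--weak products with the time-averaged $\Psi_n^\pm$ (where the paper instead pushes $\Psi_n^+$ onto the test function and invokes dominated convergence, which amounts to the same thing), and weak closedness of the strongly closed convex subsets of $L^2(0,T;V_T)$ that encode $u(t)\in V_t$, $u(t)-w(t)\in V_t^D$, and $\Psi(t)\dot u(t)\in V_t$. Your closing remark that one must work with the ``minus'' interpolants for the time-localized memberships, and that multiplication by the $W^{1,\infty}(\Omega)$ function $\Psi_n^{k-1}$ preserves $V_{(k-1)\tau_n}$, is precisely the observation the paper uses.
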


\begin{proof}
Thanks to Lemma~\ref{lem:est} the sequences $\{u_n\}_n\subset H^1(0,T;H)\cap L^\infty(0,T;V_T)$, $\{u_n^\pm\}_n\subset L^\infty(0,T;V_T)$, and $\{\tilde u_n^\pm\}_n\subset L^\infty (0,T;H)$ are uniformly bounded.
By Banach-Alaoglu's theorem there exist $u\in H^1(0,T;H)$ and $v\in L^2(0,T;V_T)$ such that, up to a not relabeled subsequence
\begin{equation*}
u_n \xrightharpoonup[n\to \infty]{L^2(0,T;V_T)}u,\quad \dot{u}_n \xrightharpoonup[n\to \infty]{L^2(0,T;H)}\dot{u},\quad u^+_n \xrightharpoonup[n\to \infty]{L^2(0,T;V_T)}v.
\end{equation*}
Since there exists a constant $C>0$ such that
\begin{align*}
\|u_n-u^+_n\|_{L^\infty(0,T;H)}\leq C \tau_n\xrightarrow[n\to\infty]{}0,
\end{align*}
we can conclude that $u=v$. Moreover, given that $ u^-_n(t)=u^+_n(t-\tau_n)$ for $t\in(\tau_n,T)$, $\tilde{u}^+_n(t)=\dot{u}_n(t)$ for a.e. $t\in(0,T)$, and $\tilde u^-_n(t)=\tilde u^+_n(t-\tau_n)$ for $t\in(\tau_n,T)$, we deduce 
\begin{equation*}
u^-_n \xrightharpoonup[n\to \infty]{L^2(0,T;V_T)} u,\quad \tilde{u}^\pm_n \xrightharpoonup[n\to \infty]{L^2(0,T;H)}\dot{u}.
\end{equation*}

By~\eqref{eq:est} we derive that the sequences $\{\E (\Psi^+_n\tilde{u}^+_n)\}_n\subset L^2(0,T;H)$ and $\{\nabla \Psi^+_n\odot \tilde{u}^+_n\}_n\subset L^2(0,T;H)$ are uniformly bounded. Indeed there exists a constant $C>0$ independent of $n$ such that
\begin{align*}
\|\nabla\Psi^+_n\odot \tilde{u}^+_n\|^2_{L^2(0,T;H)}&
=\sum_{k=1}^n\int_{(k-1)\tau_n}^{k\tau_n}\|\nabla\Psi^k_n\odot \delta u^k_n\|^2_H \,\de t
\leq \|\nabla \Psi\|^2_\infty\sum_{k=1}^n \tau_n \|\delta u^k_n\|^2_{H}\leq C,\\
\|\E (\Psi^+_n\tilde{u}^+_n)\|^2_{L^2(0,T;H)}&
=\sum_{k=1}^n\int_{(k-1)\tau_n}^{k\tau_n}\|\E (\Psi^k_n\delta u^k_n)\|^2_H \,\de t
=\sum_{k=1}^n\tau_n \|\Psi^k_n \E \delta u^k_n+\nabla\Psi^k_n\odot\delta u^k_n\|^2_H\\
&\leq 2\sum_{k=1}^n\tau_n\|\Psi^k_n \E \delta u^k_n\|^2_H+ 2\sum_{k=1}^n\tau_n\|\nabla\Psi^k_n\odot\delta u^k_n\|^2_H\leq C.
\end{align*}
Therefore, there exists $w_1,w_2\in L^2(0,T;H)$ such that, up to a further not relabeled subsequence 
\begin{equation*}
\nabla \Psi^+_n\odot \tilde{u}^+_n\xrightharpoonup[n\to \infty]{L^2(0,T;H)}w_1,\quad \E (\Psi^+_n\tilde{u}^+_n)\xrightharpoonup[n\to \infty]{L^2(0,T;H)}w_2.
\end{equation*}
We want to identify the limit functions $w_1$ and $w_2$. Consider $\varphi\in L^2(0,T;H)$, then 
\begin{equation*}
\int_0^T(\nabla \Psi^+_n\odot \tilde{u}^+_n,\varphi)_{H} \,\de t=\frac{1}{2}\int_0^T(\tilde{u}^+_n,\varphi\nabla \Psi^+_n)_{H} \,\de t+\frac{1}{2}\int_0^T(\tilde{u}^+_n,\varphi^T\nabla \Psi^+_n)_{H} \,\de t=\int_0^T(\tilde{u}^+_n,\varphi^{sym}\nabla\Psi^+_n)_{H} \,\de t,
\end{equation*} 
where $\varphi^{sym}:=\frac{\varphi+\varphi^T}{2}$. Since $\tilde{u}^+_n\xrightharpoonup[n\to \infty]{L^2(0,T;H)}\dot{u}$ and $\varphi^{sym}\nabla\Psi^+_n \xrightarrow[n\to \infty]{L^2(0,T;H)}\varphi^{sym}\nabla \Psi$ by dominated convergence theorem, we obtain
\begin{equation*}
\int_0^T(\nabla \Psi^+_n\odot \tilde{u}^+_n,\varphi)_{H} \,\de t\xrightarrow[n\to \infty]{}\int_0^T(\dot{u},\varphi^{sym}\nabla\Psi)_{H} \,\de t=\int_0^T(\nabla\Psi\odot \dot{u},\varphi)_{H} \,\de t,
\end{equation*}
and so $w_1=\nabla\Psi \odot \dot{u}$. Moreover for $\phi\in L^2(0,T;H)$ we have
\begin{equation*}
\int_0^T(\Psi^+_n\tilde{u}^+_n,\phi)_{H} \,\de t=\int_0^T(\tilde{u}^+_n,\phi \Psi^+_n)_{H} \,\de t\xrightarrow[n\to\infty]{}\int_0^T(\dot{u},\Psi\phi)_{H} \,\de t=\int_0^T(\Psi\dot{u},\phi)_{H} \,\de t,
\end{equation*} 
thanks to $\tilde{u}^+_n\xrightharpoonup[n\to \infty]{L^2(0,T;H)}\dot{u}$ and $\Psi^+_n\phi\xrightarrow[n\to\infty]{L^2(0,T;H)}\Psi\phi$, again implied by dominated convergence theorem. Therefore $\Psi^+_n\tilde{u}^+_n\xrightharpoonup[n\to\infty]{L^2(0,T;H)}\Psi \dot{u}$, from which
$\E (\Psi^+_n\tilde{u}^+_n)\xrightarrow[n\to\infty]{\mathcal{D}'(0,T;H)}\E (\Psi \dot{u})$, that gives $w_2=\E (\Psi\dot{u})$. In particular we have $\Psi\dot u\in L^2(0,T;V_T)$. By arguing in a similar way we also obtain
\begin{equation*}
\nabla \Psi^-_n\odot \tilde{u}^-_n\xrightharpoonup[n\to \infty]{L^2(0,T;H)}\nabla\Psi \odot \dot{u},\quad \E (\Psi^-_n\tilde{u}^-_n)\xrightharpoonup[n\to \infty]{L^2(0,T;H)}\E (\Psi\dot{u}).
\end{equation*}

Let us check that $u\in\mathcal W$. To this aim, let us consider the following set
\begin{equation*}
F:=\{v\in L^2(0,T;V_T): v(t)\in V_t\text{ for a.e. $t\in (0,T)$}\}\subset L^2(0,T;V_T).
\end{equation*}
We have that $F$ is a (strong) closed convex subset of $L^2(0,T;V_T)$, and so by Hahn-Banach's theorem the set $F$ is weakly closed. Notice that $\{u^-_n\}_n,\{\Psi^-_n\tilde{u}^-_n\}_n\subset F$, indeed
\begin{align*}
&u^-_n(t)=u^{k-1}_n\in V_{(k-1)\tau_n}\subset V_t\quad\text{for $t\in [(k-1)\tau_n,k\tau_n)$, $k=1,\dots,n$},\\
&\Psi_n^-(t)\tilde u^-_n(t)=\Psi_n^{k-1}\delta u^{k-1}_n\in V_{(k-1)\tau_n}\subseteq V_t\quad\text{for $t\in [(k-1)\tau_n,k\tau_n)$, $k=1,\dots,n$}.
\end{align*}
Since $u^-_n\xrightharpoonup[n\to\infty]{L^2(0,T;V_T)} u$ and $\Psi_n^-\tilde u^-_n\xrightharpoonup[n\to\infty]{L^2(0,T;V_T)}\Psi\dot u$, we conclude that $u,\Psi \dot u\in F$. Finally, to show that $u-w\in \mathcal V^D$ we observe
\begin{align*}
u_n^-(t)-w_n^-(t)=u^{k-1}_n-w^{k-1}_n\in V_n^{k-1}\subseteq V_t^D\quad \text{ for $t\in[(k-1)\tau_n,k\tau_n)$, $k=1,\dots,n$}.
\end{align*}
Therefore $\{u_n^--w_n^-\}_n\subset \{v\in L^2(0,T;V_T): v(t)\in V_t^D\text{ for a.e. $t\in (0,T)$}\}$, which is a (strong) closed convex subset of $L^2(0,T;V_T)$, and so it is weakly closed. Since $u_n^-\xrightharpoonup[n\to\infty]{L^2(0,T;V_T)}u$ and $w_n^-\xrightarrow[n\to\infty]{L^2(0,T;V_0)}w$, we get
that $u(t)-w(t)\in V_t^D$ for a.e. $t\in(0,T)$, which implies $u-w\in\mathcal V^D$.
\end{proof}

We now use Lemma~\ref{lem:conv} to pass to the limit in the discrete equation~\eqref{unk}.

\begin{lemma}\label{lem:exis}
The limit function $u\in\mathcal W$ of Lemma~\ref{lem:conv} is a weak solution to~\eqref{visco_elasto_system}--\eqref{boundary}.
\end{lemma}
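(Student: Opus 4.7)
The plan is to pass to the limit in the discrete Euler--Lagrange equation \eqref{unk} using the convergences collected in Lemma \ref{lem:conv}. Fix a test function $\varphi \in \mathcal V^D$ with $\varphi(0)=\varphi(T)=0$; by density (and since $\mathcal V^D \hookrightarrow C^0([0,T];H)$) it suffices to treat $\varphi$ that is sufficiently regular in $t$. The key observation is that, by (E3), $V^D_t \subseteq V^D_{k\tau_n}=V_n^k$ whenever $t\leq k\tau_n$. Hence, for each $t \in ((k-1)\tau_n, k\tau_n]$, the vector $\varphi(t)$ is an admissible test in the $k$-th discrete equation. I would therefore plug $v = \varphi(t)$ into \eqref{unk}, integrate over $((k-1)\tau_n,k\tau_n]$, and sum over $k=1,\dots,n$. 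Using the piecewise--constant interpolants $u_n^+$, $\tilde u_n^+$, $\Psi_n^+$, and the corresponding approximations $f_n^+$, $g_n^+$ of $f$ and $g$, this yields
\begin{equation*}
\int_0^T(\dot{\tilde u}_n,\varphi)_H\,\de t + \int_0^T(\mathbb C\E u_n^+,\E\varphi)_H\,\de t + \int_0^T(\mathbb B\,\Psi_n^+\E\tilde u_n^+,\Psi_n^+\E\varphi)_H\,\de t = \int_0^T(f_n^+,\varphi)_H\,\de t + \int_0^T(g_n^+,\varphi)_{H_N}\,\de t,
\end{equation*}
since $\dot{\tilde u}_n(t)=\delta^2 u_n^k$ on each subinterval.

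For the viscosity term, exactly as in the remark preceding \eqref{Psiedotu}, the identity $\Psi_n^+ \E \tilde u_n^+ = \E(\Psi_n^+\tilde u_n^+) - \nabla\Psi_n^+\odot\tilde u_n^+$ holds on each slab $((k-1)\tau_n,k\tau_n]$ because $\tilde u_n^+ = \delta u_n^k \in V_{(k-1)\tau_n}\subseteq V_{k\tau_n}$ and $\Psi_n^k \in L^\infty \cap W^{1,\infty}$. I would substitute this in, so that the viscosity contribution splits into a symmetric--gradient piece and a lower--order piece. The passage to the limit in these two pieces uses \eqref{pf2}: namely $\E(\Psi_n^+\tilde u_n^+)\rightharpoonup \E(\Psi\dot u)$ and $\nabla\Psi_n^+\odot\tilde u_n^+ \rightharpoonup \nabla\Psi\odot\dot u$ weakly in $L^2(0,T;H)$, paired with the strong convergence $\Psi_n^+ \E\varphi \to \Psi\,\E\varphi$ in $L^2(0,T;H)$ (dominated convergence, using \eqref{Psi} and the a.e.\ convergence of $\Psi_n^+$ to $\Psi$). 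The elastic term converges via $u_n^+\rightharpoonup u$ in $L^2(0,T;V_T)$ (cf.\ \eqref{pf1}), while $f_n^+\to f$ in $L^2(0,T;H)$ and $g_n^+\to g$ in $L^2(0,T;H_N)$ (standard approximation of $L^2$ and $H^1$ functions by step functions) handle the right--hand side.

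The main obstacle is the second--time--derivative term $\int_0^T(\dot{\tilde u}_n,\varphi)_H\,\de t$, because $\dot{\tilde u}_n$ has no uniform bound in $H$. I would handle it by a discrete--in--time integration by parts. Writing $\dot{\tilde u}_n(t)=\delta^2 u_n^k$ and using Abel summation,
\begin{equation*}
\int_0^T(\dot{\tilde u}_n,\varphi)_H\,\de t = \sum_{k=1}^n(\delta u_n^k-\delta u_n^{k-1},\Phi_n^k)_H, \qquad \Phi_n^k:=\frac{1}{\tau_n}\int_{(k-1)\tau_n}^{k\tau_n}\varphi(t)\,\de t,
\end{equation*}
gives a boundary contribution at $k=n$ and $k=1$ that vanishes in the limit because $\Phi_n^n,\Phi_n^1\to 0$ in $H$ (from $\varphi(T)=\varphi(0)=0$ and the continuity of $\varphi$ into $H$), together with the uniform bound on $\delta u_n^k$ from Lemma \ref{lem:est}. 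The remaining sum $-\sum_{k=1}^{n-1}(\delta u_n^k,\Phi_n^{k+1}-\Phi_n^k)_H$ equals $-\int_0^T(\tilde u_n^+,\dot\varphi_n)_H\,\de t$ for a suitable shifted average $\dot\varphi_n$ of $\dot\varphi$, which converges strongly in $L^2(0,T;H)$ to $\dot\varphi$. Combined with the weak convergence $\tilde u_n^+ \rightharpoonup \dot u$ from \eqref{pf1}, this yields
\begin{equation*}
\int_0^T(\dot{\tilde u}_n,\varphi)_H\,\de t \xrightarrow[n\to\infty]{} -\int_0^T(\dot u,\dot\varphi)_H\,\de t,
\end{equation*}
and I would collect all limits to obtain precisely \eqref{weakform}. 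Since $u-w\in\mathcal V^D$ by Lemma \ref{lem:conv}, this shows $u\in\mathcal W$ is a weak solution.
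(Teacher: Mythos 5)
Your proposal is correct and follows essentially the same approach as the paper: test the discrete equation with a discretized version of $\varphi$, perform the discrete integration by parts in time on the inertia term, and pass to the limit via the weak convergences of Lemma~\ref{lem:conv} together with a density argument. The only cosmetic difference is that the paper uses the grid-point samples $\varphi_n^k := \varphi(k\tau_n)$, so that the boundary terms in the Abel summation vanish exactly because $\varphi(0)=\varphi(T)=0$, whereas you use interval averages $\Phi_n^k$ whose boundary contributions vanish only in the limit; both routes lead to the same identity \eqref{weakform}.
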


\begin{proof}
We only need to prove that $u\in\mathcal W$ satisfies~\eqref{weakform}. We fix $n\in\mathbb N$, $\varphi\in C_c^1(0,T;V_T)$ such that $\varphi(t)\in V_t^D$ for every $t\in(0,T)$, and we consider 
\begin{align*}
&\varphi_n^k:=\varphi(k\tau_n)\quad\text{for $k=0,\dots,n$,}\quad\delta \varphi_n^k:=\frac{\varphi_n^k-\varphi_n^{k-1}}{\tau_n}\quad \text{for }k=1,\dots,n,
\end{align*}
and the approximating sequences
\begin{align*}
&\varphi^+_n(t):=\varphi_n^k, & & \tilde \varphi^+_n(t):=\delta\varphi_n^k & & t\in ((k-1)\tau_n,k\tau_n],\, k=1,\dots,n.
\end{align*}
If we use $\tau_n\varphi_n^k\in V_n^k$ as test function in~\eqref{unk}, after summing over $k=1,...,n$, we get
\begin{equation}\label{limit}
\begin{aligned}
\sum_{k=1}^n\tau_n(\delta^2u_n^k,\varphi^k_n)_H+\sum_{k=1}^n\tau_n(\mathbb C\E u_n^k,\E \varphi^k_n)_{H}&+\sum_{k=1}^n\tau_n(\mathbb B\Psi_n^k\E \delta u_n^k,\Psi_n^k \E \varphi^k_n)_{H}\\
&=\sum_{k=1}^n\tau_n(f_n^k,\varphi^k_n)_H+\sum_{k=1}^n\tau_n(g_n^k,\varphi^k_n)_{H_N}.
\end{aligned}
\end{equation}
By these identities
\begin{align*}
\sum_{k=1}^n \tau_n(\delta^2 u^k_n,\varphi^k_n)_H
=& -\sum_{k=1}^n\tau_n(\delta u^{k-1}_n,\delta \varphi^k_n)_H=-\int_0^T(\tilde{u}^-_n(t),\tilde{\varphi}^+_n(t))_H \,\de t,
\end{align*}
from~\eqref{limit} we deduce
\begin{equation}\label{eqapp}
\begin{aligned}
-\int_0^T(\tilde{u}^-_n,\tilde{\varphi}^+_n)_H \,\de t+&\int_0^T(\mathbb{C}\E u^+_n,\E \varphi^+_n)_{H} \,\de t-\int_0^T (\mathbb{B}\nabla \Psi^+_n\odot \tilde{u}^+_n,\E \varphi^+_n)_{H} \,\de t\\
+&\int_0^T (\mathbb{B}\E (\Psi^+_n\tilde{u}^+_n),\E \varphi^+_n)_{H} \,\de t=\int_0^T(f^+_n,\varphi^+_n)_H \,\de t+\int_0^T(g^+_n,\varphi^+_n)_{H_N} \,\de t.
\end{aligned}
\end{equation}
Thanks to~\eqref{pf1},~\eqref{pf2}, and the following convergences 
\begin{equation*}
\varphi^+_n\xrightarrow[n\to\infty]{L^2(0,T;V_T)}\varphi, \quad \tilde{\varphi}^+_n\xrightarrow[n\to\infty]{L^2(0,T;H)}\dot{\varphi},\quad f^+_n\xrightarrow[n\to\infty]{L^2(0,T;H)}f ,\quad g^+_n\xrightarrow[n\to\infty]{L^2(0,T;H_N)}g,
\end{equation*}
we can pass to the limit in~\eqref{eqapp}, and we get that $u\in\mathcal W$ satisfies~\eqref{weakform} for every $\varphi\in C_c^1(0,T;V_T)$ such that $\varphi(t)\in V_t^D$ for every $t\in(0,T)$. Finally, by using a density argument (see~\cite[Remark 2.9]{DMT2}), we conclude that $u\in\mathcal W$ is a weak solution to~\eqref{visco_elasto_system}--\eqref{boundary}.
\end{proof}

\section{Initial Conditions and Energy--Dissipation Inequality}\label{visc_sec3}

To complete our existence result, it remains to prove that the function $u\in\mathcal W$ given by Lemma~\ref{lem:exis} satisfies the initial conditions~\eqref{initials} in the sense of~\eqref{icd}. Let us start by showing that the second distributional derivative $\ddot{u}$ belongs to $L^2(0,T;(V_0^D)')$. If we consider the discrete equation~\eqref{unk}, for every $v\in V_0^D\subseteq V_n^k$, with $\|v\|_{V_0}\leq 1$, we have
\begin{align*}
|(\delta^2 u^k_n,v)_H|\leq & \|\mathbb{C}\|_\infty\|\E u^k_n\|_H+\|\mathbb{B}\|_\infty\|\Psi\|_\infty\|\Psi^k_n \E \delta u^k_n\|_H+\| f^k_n\|_H+C_{tr}\| g^k_n\|_{H_N}.
\end{align*}
Therefore, taking the supremum over $v\in V_0^D$ with $\norm{v}_{V_0}\leq1$, we obtain the existence of a positive constant $C$ such that 
\begin{equation*}
\|\delta^2 u^k_n\|_{(V_0^D)'}^2\leq C(\|\E u^k_n\|_H^2+\|\Psi^k_n \E \delta u^k_n\|_H^2+\|f^k_n\|_H^2+\|g^k_n\|_{H_N}^2).
\end{equation*}
If we multiply this inequality by $\tau_n$ and we sum over $k=1,\dots,n$, we get
\begin{align}\label{d2}
\sum_{k=1}^n\tau_n\|\delta^2 u^k_n\|^2_{(V^D_0)'}\leq C\left(\sum_{k=1}^n\tau_n\|\E u^k_n\|_H^2+\sum_{k=1}^n\tau_n\|\Psi^k_n \E \delta u^k_n\|_H^2+\|f\|_{L^2(0,T;H)}^2+\|g\|_{L^2(0,T;H_N)}^2\right).
\end{align}
Thanks to~\eqref{d2} and Lemma~\ref{lem:est} we conclude that $\sum_{k=1}^n\tau_n\|\delta^2 u^k_n\|^2_{(V_0^D)'}\leq\tilde C$ for every $n\in\mathbb N$ for a positive constant $\tilde C$ independent on $n\in\mathbb N$. In particular the sequence $\{\tilde u_n\}_n\subset H^1(0,T;(V_0^D)')$ is uniformly bounded (notice that $\dot{\tilde u}_n(t)=\delta^2 u_n^k$ for $t\in((k-1)\tau_n,k\tau_n)$ and $k=1,\dots,n$). Hence, 
up to extract a further (not relabeled) subsequence  from the one of Lemma~\ref{lem:conv}, we get
\begin{equation}\label{new}
\tilde{u}_n\xrightharpoonup[n\to\infty]{H^1(0,T;(V^D_0)')}w_3,
\end{equation}
and by using the following estimate
\begin{equation*}
\|\tilde{u}_n-\tilde{u}^+_n\|_{L^2(0,T;(V^D_0)')}\leq \tau_n\|\dot{\tilde{u}}_n\|_{L^2(0,T;(V^D_0)')}\leq \tilde C\tau_n\xrightarrow[n\to\infty]{}0
\end{equation*}
we conclude that $w_3=\dot{u}$. 

Let us recall the following result, whose proof can be found for example in~\cite{DL}.

\begin{lemma}\label{lem:wc}
Let $X,Y$ be two reflexive Banach spaces such that $X\hookrightarrow Y$ continuously. Then 
$$L^{\infty}(0,T;X)\cap C^0_w([0,T];Y)= C^0_w([0,T];X).$$
\end{lemma}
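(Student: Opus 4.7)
The plan is to prove the two set-theoretic inclusions separately; only $L^\infty(0,T;X) \cap C^0_w([0,T];Y) \subseteq C^0_w([0,T];X)$ is nontrivial.

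For the direction $C^0_w([0,T];X) \subseteq L^\infty(0,T;X) \cap C^0_w([0,T];Y)$, let $u \in C^0_w([0,T];X)$. For every $x' \in X'$ the map $t \mapsto \spr{x'}{u(t)}_{X'}$ is continuous on the compact interval $[0,T]$, hence bounded; Banach--Steinhaus then yields $\sup_{t \in [0,T]} \|u(t)\|_X < \infty$, so $u \in L^\infty(0,T;X)$. Weak continuity into $Y$ follows from the fact that the continuous embedding $X \hookrightarrow Y$ induces a bounded restriction map $Y' \to X'$, so each $y' \in Y'$ corresponds to some $x' \in X'$ with $\spr{y'}{u(t)}_{Y'} = \spr{x'}{u(t)}_{X'}$, which is continuous in $t$.

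For the harder inclusion, fix $u \in L^\infty(0,T;X) \cap C^0_w([0,T];Y)$. First I would use the $L^\infty$ bound to fix a full-measure set $N \subseteq [0,T]$ and a constant $M>0$ such that $u(t) \in X$ and $\|u(t)\|_X \leq M$ for every $t \in N$. To show $u(t_0) \in X$ for an arbitrary $t_0 \in [0,T]$, I would pick $\{t_n\} \subseteq N$ with $t_n \to t_0$: since $X$ is reflexive, Banach--Alaoglu extracts a subsequence with $u(t_{n_k}) \rightharpoonup v$ in $X$ for some $v \in X$; continuity of the embedding propagates this convergence to $u(t_{n_k}) \rightharpoonup v$ in $Y$, while the hypothesis $u \in C^0_w([0,T];Y)$ gives $u(t_{n_k}) \rightharpoonup u(t_0)$ in $Y$. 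Uniqueness of weak limits in $Y$ forces $u(t_0) = v \in X$, and weak lower semicontinuity of $\|\cdot\|_X$ extends the pointwise bound $\|u(t_0)\|_X \leq M$ to all of $[0,T]$.

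The final step is to establish weak continuity of $u$ as a map into $X$. For any sequence $t_n \to t_0$ the family $\{u(t_n)\}$ is now uniformly bounded in $X$; every subsequence admits a further sub-subsequence converging weakly in $X$ to some $v$, which by the identification argument above must coincide with $u(t_0)$. A standard subsequence-of-subsequences argument then upgrades this to $u(t_n) \rightharpoonup u(t_0)$ in $X$, so $u \in C^0_w([0,T];X)$. The only mild obstacle is reconciling the $L^\infty$ equivalence class, defined only up to null sets, with the pointwise specification demanded by $C^0_w([0,T];Y)$; reflexivity of $X$ combined with uniqueness of weak limits in $Y$ is precisely what allows one to identify the two representatives and pointwise elevate the $L^\infty$ bound.
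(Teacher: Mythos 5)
The paper itself does not prove this lemma: it is stated as a recalled result with a citation to Dautray--Lions~\cite{DL}, and no argument is given in the text. Your proof is a correct and complete self-contained argument, and it coincides with the standard one found in the literature for this statement (often attributed to Lions--Magenes or Strauss): the easy inclusion follows from the uniform boundedness principle together with the observation that the embedding $X\hookrightarrow Y$ dualizes to a bounded map $Y'\to X'$; the nontrivial inclusion promotes the a.e.\ $L^\infty$ bound to a genuine pointwise bound by extracting a weakly convergent subsequence in $X$ along a sequence in the full-measure set, identifying the weak limit with $u(t_0)$ via uniqueness of weak limits in $Y$, and then upgrading to weak continuity in $X$ with the subsequence-of-subsequences argument. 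The one point worth stating more carefully is the appeal to ``Banach--Alaoglu'': that theorem gives weak compactness of bounded sets in a reflexive space, but to extract a weakly convergent \emph{subsequence} one should invoke Eberlein--\v{S}mulian (or note that, in the paper's application, $X$ is a separable Hilbert space, where the closed ball is weakly metrizable). With that caveat the argument is sound.
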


Since $H^1(0,T;(V^D_0)')\hookrightarrow C^0([0,T],(V^D_0)')$, by using Lemmas~\ref{lem:conv} and~\ref{lem:wc} we get that our weak solution $u\in \mathcal{W}$ satisfies
\begin{equation*}
 u\in C^0_w([0,T];V_T),\quad \dot{u}\in C^0_w([0,T];H),\quad \ddot{u} \in L^2(0,T;(V^D_0)').
\end{equation*} 
By~\eqref{pf1} and~\eqref{new} we hence obtain
\begin{equation}\label{weakco}
u_n(t)\xrightharpoonup[n\to\infty]{H}u(t),\quad \tilde{u}_n(t)\xrightharpoonup[n\to\infty]{(V^D_0)'}\dot{u}(t)\quad \text{for every } t\in [0,T],
\end{equation}
so that $u(0)=u^0$ and $\dot u(0)=u^1$, since $u_n(0)=u^0$ and $\tilde{u}_n(0)=u^1$.

To prove that 
\begin{equation*}
\lim_{h\to 0^+}\frac{1}{h}\int_0^h\left(\|u(t)-u^0\|^2_{V_t}+\|\dot{u}(t)-u^1\|^2_{H}\right) \,\de t=0
\end{equation*} 
we will actually show
\begin{equation*}
    \lim_{t\to 0^+}u(t)= u^0\text{ in $V_T$},\quad\lim_{t\to 0^+}\dot u(t)=u^1\text{ in $H$}.
\end{equation*}
This is a consequence of following energy--dissipation inequality which holds for the weak solution $u\in\mathcal W$ of Lemma~\ref{lem:exis}. Let us define the total energy as
\begin{equation*}
    \mathcal{E}(t):=\frac{1}{2}\norm{\dot u(t)}_H^2+\frac{1}{2}(\mathbb C\E u(t),\E u(t))_H\quad t\in[0,T].
\end{equation*}
Notice that $\mathcal E(t)$ is well defined for every $t\in[0,T]$ since $u\in C_w^0([0,T];V_T)$ and $\dot u\in C_w^0([0,T];H)$, and that $\mathcal E(0)=\frac{1}{2}\norm{ u^1}_H^2+\frac{1}{2}(\mathbb C\E u^0,\E u^0)_H$.

\begin{theorem}
The weak solution $u\in\mathcal W$ to~\eqref{visco_elasto_system}--\eqref{boundary}, given by Lemma~\ref{lem:exis}, satisfies for every $t\in[0,T]$ the following energy--dissipation inequality 
\begin{equation}\label{eq:enin}
\mathcal{E}(t)+\int_0^{t} (\mathbb{B}\Psi \E \dot{u},\Psi \E \dot{u} )_H\,\de s \leq \mathcal{E}(0)+\mathcal W_{tot}(t),
\end{equation}
where $\Psi \E \dot u$ is the function defined in~\eqref{Psiedotu} and $\mathcal W_{tot}(t)$ is the total work on the solution $u$ at time $t\in[0,T]$, which is given by
\begin{equation}\label{totalwork}
\begin{aligned}
\mathcal W_{tot}(t):&=\int_0^{t}\left[(f,\dot{ u}-\dot w)_H +(\mathbb{C}\E u,\E \dot{w})_H +(\mathbb{B}\Psi \E \dot{u},\Psi \E \dot{ w})_H -(\dot{u},\ddot{ w})_H-(\dot g,u-w)_{H_N} \right]\de s\\
&\quad+(\dot{u}(t),\dot{w}(t))_H +(g(t),u(t)-w(t))_{H_N} -(u^1,\dot{w}(0))_H -(g(0),u^0-w(0))_{H_N}.
\end{aligned}
\end{equation}
\end{theorem}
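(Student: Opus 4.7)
The idea is to pass to the limit in the discrete energy inequality \eqref{grw} derived inside the proof of Lemma~\ref{lem:est}. Rather than absorbing the terms of $\sum_{k=1}^i\tau_n L_n^k$ through Young's inequality as was done there, we keep them in the form produced by the discrete integration by parts \eqref{gn} (applied to the Neumann work $(g_n^k,\delta u_n^k-\delta w_n^k)_{H_N}$) and \eqref{dwn} (applied to the inertial contribution $(\delta^2 u_n^k,\delta w_n^k)_H$); the right-hand side of \eqref{grw} then becomes exactly the discrete counterpart of $\mathcal{E}_0+\mathcal{W}_{tot}(i\tau_n)$. The non-negative $\tau_n^2$-terms on the left can be dropped again since we seek only the one-sided inequality \eqref{eq:enin}. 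Fix $t\in(0,T]$ and choose $i_n\in\{1,\dots,n\}$ with $t_n:=i_n\tau_n\to t$; we then let $n\to\infty$ in the resulting inequality at $i=i_n$.

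For the left-hand side we use weak lower semicontinuity. At the discrete times the affine and piecewise constant interpolants coincide, so $u_n^+(t_n)=u_n(t_n)$ and $\tilde u_n^+(t_n)=\tilde u_n(t_n)$. The uniform bounds of Lemma~\ref{lem:est} and the discussion preceding \eqref{new} give, via Cauchy--Schwarz,
$$\|u_n(t_n)-u_n(t)\|_H\le C|t_n-t|^{1/2},\qquad \|\tilde u_n(t_n)-\tilde u_n(t)\|_{(V_0^D)'}\le C|t_n-t|^{1/2},$$
so combining with \eqref{weakco} we get $u_n(t_n)\rightharpoonup u(t)$ in $H$ and $\tilde u_n(t_n)\rightharpoonup \dot u(t)$ in $(V_0^D)'$. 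The uniform $L^\infty(0,T;V_T)$ and $L^\infty(0,T;H)$ bounds of Lemma~\ref{lem:est}, together with uniqueness of the weak limit, upgrade these to $u_n^+(t_n)\rightharpoonup u(t)$ in $V_T$ and $\tilde u_n^+(t_n)\rightharpoonup \dot u(t)$ in $H$. Weak lower semicontinuity of $\|\cdot\|_H^2$ and of $(\mathbb{C}\E\cdot,\E\cdot)_H$ then yields $\liminf_n\bigl[\tfrac12\|\tilde u_n^+(t_n)\|_H^2+\tfrac12(\mathbb{C}\E u_n^+(t_n),\E u_n^+(t_n))_H\bigr]\ge \mathcal{E}(t)$. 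For the dissipation we rewrite $\sum_{k=1}^{i_n}\tau_n b_n^k(\delta u_n^k,\delta u_n^k)=\int_0^{t_n}(\mathbb{B}\Psi_n^+\E\tilde u_n^+,\Psi_n^+\E\tilde u_n^+)_H\,\de s$ and use $\Psi_n^+\E\tilde u_n^+=\E(\Psi_n^+\tilde u_n^+)-\nabla\Psi_n^+\odot\tilde u_n^+$ together with \eqref{pf2}; the coercivity \eqref{CB2} of $\mathbb{B}$ and weak lower semicontinuity give $\liminf_n\int_0^{t_n}(\mathbb{B}\Psi_n^+\E\tilde u_n^+,\Psi_n^+\E\tilde u_n^+)_H\,\de s\ge \int_0^t(\mathbb{B}\Psi\E\dot u,\Psi\E\dot u)_H\,\de s$.

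For the right-hand side $\sum_{k=1}^{i_n}\tau_n L_n^k$, all the bulk integrals involving $f_n^+,g_n^+,\dot w$-discretizations and $\ddot w$-discretizations paired against $\tilde u_n^\pm$, $u_n^\pm$, $\Psi_n^\pm\tilde u_n^\pm$ or $\nabla\Psi_n^\pm\odot\tilde u_n^\pm$ converge to the corresponding integrals in $\mathcal{W}_{tot}(t)$ by coupling the weak convergences of Lemma~\ref{lem:conv} with the strong $L^2$-convergence of the piecewise constant discretizations of the data (which uses $f\in L^2(0,T;H)$, $g\in H^1(0,T;H_N)$, $w\in H^2(0,T;H)\cap H^1(0,T;V_0)$). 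The endpoint boundary contributions produced by \eqref{gn}--\eqref{dwn}, namely $(g(t_n),u_n^+(t_n)-w_n^+(t_n))_{H_N}$ and $(\tilde u_n^+(t_n),\delta w_n^{i_n})_H$, converge to the corresponding boundary terms $(g(t),u(t)-w(t))_{H_N}$ and $(\dot u(t),\dot w(t))_H$ of \eqref{totalwork} using the continuity of $g$ and $\dot w$ with values in $H_N$ and $H$ respectively, the pointwise convergences of $u_n^+(t_n)$ and $\tilde u_n^+(t_n)$ already established, and the trace inequality \eqref{trace} to pair the $V_T$-weak convergence with $g(t_n)\to g(t)$ in $H_N$.

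The main obstacle is precisely the pointwise weak convergence at the moving endpoint $t_n\to t$, since \eqref{weakco} supplies it only for fixed $t$; it is resolved as above by combining the uniform H\"older-in-$t$ control of $u_n$ and $\tilde u_n$ with uniqueness of the weak limit under the $L^\infty$-bounds of Lemma~\ref{lem:est}. Once this is in place, collecting the lower bound on the left with the limit on the right produces \eqref{eq:enin} at every $t\in(0,T]$; the case $t=0$ is trivial because $\mathcal{W}_{tot}(0)=0$ thanks to $u(0)=u^0$ and $\dot u(0)=u^1$, which cancel the boundary terms at $0$ in \eqref{totalwork}.
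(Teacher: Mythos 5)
Your proposal is correct and follows essentially the same route as the paper: pass to the limit in the discrete energy inequality~\eqref{grw} (rewritten via the discrete integrations by parts~\eqref{gn},~\eqref{dwn} as~\eqref{qf}), using weak lower semicontinuity of the kinetic, elastic and dissipative quadratic forms on the left together with the nonnegativity of the integrand to absorb the discrepancy $t\le t_n$, and strong convergence of the data discretizations against the weak convergences of Lemma~\ref{lem:conv} on the right, with the endpoint terms handled through the pointwise weak convergences $u_n^+(t)\rightharpoonup u(t)$ in $V_T$ and $\tilde u_n^+(t)\rightharpoonup\dot u(t)$ in $H$. One small remark: the ``moving endpoint'' difficulty you isolate and resolve via the $\frac12$-H\"older estimate in $(V_0^D)'$ is sidestepped more directly in the paper, by noting that the piecewise constant interpolants satisfy $u_n^+(t)=u_n^+(t_n)$ and $\tilde u_n^+(t)=\tilde u_n^+(t_n)$ because $t$ and $t_n$ lie in the same half-open interval $((j-1)\tau_n,j\tau_n]$, so~\eqref{weakco} applied directly at the fixed time $t$ already suffices after comparing the affine and piecewise constant interpolants (whose difference is $O(\tau_n)$); your H\"older argument is valid but redundant.
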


\begin{remark}
From the classical point of view, the total work on the solution $u$ at time $t\in[0,T]$ is given by
\begin{equation}\label{totalwork2}
    \mathcal W_{tot}(t):=\mathcal W_{load}(t)+\mathcal W_{bdry}(t),
\end{equation}
where $\mathcal W_{load}(t)$ is the work on the solution $u$ at time $t\in[0,T]$ due to the loading term, which is defined as
\begin{equation*}
   \mathcal W_{load}(t):=\int_0^t(f(s), \dot u(s))_H\,\de s,
\end{equation*}
and $\mathcal W_{bdry}(t)$ is the work on the solution $u$ at time $t\in[0,T]$ due to the varying boundary conditions, which one expects to be equal to
\begin{align*}
   \mathcal W_{bdry}(t):=\int_0^t(g(s),\dot u(s))_{H_N}\,\de s+\int_0^t((\mathbb C\E u(s)+\Psi^2(s)\mathbb B\E \dot u(s))\nu,\dot w(s))_{H_D}\,\de s,
\end{align*}
being $H_D:=L^2(\partial_D\Omega;\R^d)$. Unfortunately, $\mathcal W_{bdry}(t)$ is not well defined under our assumptions on $u$. Notice that when $\Psi\equiv 1$ on a neighborhood $U$ of the closure of $\partial_N\Omega$, then every weak solution $u$ to~\eqref{visco_elasto_system}--\eqref{boundary} satisfies $u\in H^1(0,T;H^1((\Omega\cap U)\setminus\Gamma;\R^d))$, which gives that $u\in H^1(0,T;H_N)$ by our assumptions on $\Gamma$. Hence the first term of $\mathcal W_{bdry}(t)$ makes sense and satisfies
\begin{align*}
    \int_0^t(g(s),\dot u(s))_{H_N}\,\de s=(g(t), u(t))_{H_N}-(g(0), u(0))_{H_N}- \int_0^t(\dot g(s), u(s))_{H_N}\,\de s.
\end{align*}
The term involving the Dirichlet datum $w$ is more difficult to handle since the trace of $(\mathbb C\E u+\Psi^2\mathbb B\E \dot u)\nu$ on $\partial_D\Omega$ is not well defined even when $\Psi\equiv 1$ on a neighborhood of the closure of $\partial_D\Omega$. If we assume that $u\in H^1(0,T;H^2(\Omega\setminus\Gamma;\R^d))\cap H^2(0,T;L^2(\Omega;\R^d))$ and that $\Gamma$ is a smooth manifold, then we can integrate by part equation~\eqref{weakform} to deduce that $u$ satisfies~\eqref{visco_elasto_system}. In this case, $(\mathbb C\E u+\Psi^2\mathbb B\E \dot u)\nu\in L^2(0,T;H_D)$ and by using~\eqref{visco_elasto_system}, together with the divergence theorem and the integration by parts formula, we deduce
\begin{align*}
&\int_0^t((\mathbb C\E u(s)+\Psi^2(s)\mathbb B\E \dot u(s))\nu,\dot w(s))_{H_D}\,\de s\\
&=\int_0^t\left[(\div(\mathbb C\E u(s)+\Psi^2(s)\mathbb B\E \dot u(s)),\dot w(s))_H+(\mathbb C\E u(s)+\Psi^2(s)\mathbb B\E \dot u(s),\E \dot w(s))_H-(g(s),\dot w(s))_{H_N}\right]\de s\\
&=\int_0^t\left[(\ddot u(s),\dot w(s))_H-(f(s),\dot w(s))_H+(\mathbb C\E u(s)+\Psi^2(s)\mathbb B\E \dot u(s),\E \dot w(s))_H-(g(s),\dot w(s))_{H_N}\right]\de s\\
&=\int_0^{t}\left[(\mathbb{C}\E u(s),\E \dot{w}(s))_H+(\mathbb{B}\Psi (s)\E \dot{u}(s),\Psi(s) \E \dot{ w}(s))_H-(f(s),\dot w(s))_H\right]\de s\\
&\quad+\int_0^t\left[(\dot g(s),w(s))_{H_N}-(\dot{u}(s),\ddot{ w}(s))_H\right]\de s-(g(t),w(t))_{H_N}+(\dot{u}(t),\dot{w}(t))_H +(g(0),w(0))_{H_N}-(u^1,\dot{w}(0))_H .
\end{align*}
Hence, the definition of total work given in~\eqref{totalwork} is coherent with the classical one~\eqref{totalwork2}. Notice that if $u$ is the solution to~\eqref{visco_elasto_system}--\eqref{boundary} given by Lemma~\ref{lem:exis}, then~\eqref{totalwork} is well defined for every $t\in[0,T]$, since $g\in C^0([0,T];H_N)$, $\dot w\in C^0([0,T];H)$, $u\in C_w^0([0,T];V_T)$, and $\dot u\in C_w^0([0,T];H)$. In particular, the function $t\mapsto \mathcal W_{tot}(t)$ from $[0,T]$ to $\R$ is continuous.
\end{remark}

\begin{proof}
Fixed $t\in (0,T]$, for every $n\in\mathbb N$ there exists a unique $j\in\{1,\dots,n\}$ such that $t\in((j-1)\tau_n,j\tau_n]$. After setting $t_n:=j\tau_n$, we can rewrite~\eqref{grw} as
\begin{equation}\label{qf}
\frac{1}{2}\| \tilde{u}_n^+(t)\|^2_H+\frac{1}{2}(\mathbb C\E u_n^+(t),\E u_n^+(t))_H+\int_0^{t_n} (\mathbb B\Psi_n^+\E \tilde{u}_n^{+},\Psi_n^+\E \tilde{ u}_n^{+})_H\,\de s\leq \mathcal{E}(0)+\mathcal{W}^+_n(t),
\end{equation}
where
\begin{align*}
\mathcal{W}^+_n(t)&:=\int_0^{t_n}\left[(f^+_n,\tilde u_n^+-\tilde{ w}^+_n)_H+(\mathbb{C} \E u^+_n,\E \tilde{ w}^+_n)_H+(\mathbb{B}\Psi_n^+ \E \tilde u_n^+,\Psi_n^+ \E \tilde w_n^+)_H\right]\de s\\
&\quad+\int_0^{t_n}\left[({\tilde u}^+_n,\tilde w_n^+)_H+(g^+_n,\tilde u_n^+-\tilde{ w}^+_n)_{H_N}\right]\de s.
\end{align*}
Thanks to~\eqref{eq:est}, we have
\begin{align*}
&\|u_n(t)-u_n^+(t)\|_H=\|u_n^j+(t-j\tau_n)\delta u_n^j-u_n^j\|_H\leq \tau_n\|\delta u_n^j\|_H\leq C\tau_n\xrightarrow[n\to \infty]{}0,\\
&\|\tilde{u}_n(t)-\tilde{u}_n^+(t)\|^2_{(V_0^D)'}=\|\delta u_n^j+(t-j\tau_n)\delta^2 u_n^j-\delta u_n^j\|^2_{(V_0^D)'}\leq \tau^2_n\|\delta^2 u_n^j\|^2_{(V_0^D)'}\leq C\tau_n\xrightarrow[n\to \infty]{}0.
\end{align*}
The last convergences and~\eqref{weakco} imply 
\begin{equation*}
u_n^+(t)\xrightharpoonup[n\to\infty]{H}u(t), \qquad \tilde{u}_n^+(t)\xrightharpoonup[n\to\infty]{(V_0^D)'}\dot{u}(t),
\end{equation*}
and since $\|u_n^+(t)\|_{V_T}+\|\tilde{u}_n^+(t)\|_H\leq C$ for every $n\in\mathbb N$, we get
\begin{align}\label{newconv}
u_n^+(t)\xrightharpoonup[n\to \infty]{V_T}u(t), \qquad \tilde{u}_n^+(t)\xrightharpoonup[n\to \infty]{H}\dot{u}(t).
\end{align}
By the lower semicontinuity properties of $v\mapsto\norm{v}_H^2$ and  $v\mapsto (\mathbb{C}\E v,\E v)_H$, we conclude 
\begin{align}
&\| \dot{u}(t)\|^2_H\leq\liminf_{n\to\infty}\| \tilde{u}_n^+(t)\|^2_H,\label{se1}\\
&(\mathbb{C}\E u(t),\E u(t))_H \leq \liminf_{n\to\infty}(\mathbb C\E u_n^+(t),\E u_n^+(t))_H.
\end{align}
Thanks to Lemma~\ref{lem:conv} and~\eqref{Psiedotu}, we obtain
$$\Psi^+_n \E \tilde{u}^+_n=\E (\Psi_n^+\tilde u_n^+)-\nabla\Psi_n^+\odot\tilde u_n^+\xrightharpoonup[n\to\infty]{L^2(0,T;H)}\E (\Psi\dot{u})-\nabla \Psi\odot \dot{u}=\Psi \E \dot u,$$
so that
\begin{equation}
    \int_0^t (\mathbb B\Psi \E \dot{u},\Psi \E \dot{ u})_H\,\de s\leq\liminf_{n\to \infty}\int_0^t (\mathbb B\Psi_n^+\E \tilde{u}_n^{+},\Psi_n^+\E \tilde{ u}_n^{+})_H\,\de s\leq\liminf_{n\to \infty}\int_0^{t_n} (\mathbb B\Psi_n^+\E \tilde{u}_n^{+},\Psi_n^+\E \tilde{ u}_n^{+})_H\,\de s,
\end{equation}
since $t\leq t_n$ and $v\mapsto\int_0^t(\mathbb Bv,v)_H\,\de s$ is a non negative quadratic form on $L^2(0,T;H)$. 
Let us study the right--hand side of~\eqref{qf}. Given that we have
\begin{equation*}
\chi_{[0,t_n]}f^+_n\xrightarrow[n\to\infty]{L^2(0,T;H)}\chi_{[0,t]}f,\quad \tilde{u}^+_n-\tilde{w}^+_n\xrightharpoonup[n\to\infty]{L^2(0,T;H)}\dot{u}-\dot w,
\end{equation*}
we can deduce
\begin{align}
\int_0^{t_n}(f^+_n,\tilde{u}^+_n-\tilde{w}^+_n)_H\,\de s\xrightarrow[n\to\infty]{}\int_0^t(f,\dot{u}-\dot w)_H\,\de s.
\end{align}
In a similar way, we can prove 
\begin{align}
&\int_0^{t_n}(\mathbb{C}\E u^+_n,\E \tilde{ w}^+_n)_H\,\de s \xrightarrow[n\to\infty]{}\int_0^{t}(\mathbb{C}\E u,\E \dot{ w})_H\,\de s,\\
&\int_0^{t_n}(\mathbb{B}\Psi^+_n \E \tilde{u}^+_n,\Psi^+_n \E \tilde{ w}^+_n)_H\,\de s\xrightarrow[n\to\infty]{}\int_0^{t}(\mathbb{B}\Psi \E \dot u,\Psi \E \dot{ w})_H\,\de s,
\end{align} 
since the following convergences hold
\begin{align*}
&\chi_{[0,t_n]}\E \tilde{w}_n^+\xrightarrow[n\to\infty]{L^2(0,T;H)}\chi_{[0,t]}\E \dot{w},& &\mathbb{C}\E u^+_n\xrightharpoonup[n\to\infty]{L^2(0,T;H)}\mathbb{C}\E u,\\
&\chi_{[0,t_n]}\Psi^+_n \E \tilde{w}^+_n\xrightarrow[n\to\infty]{L^2(0,T;H)}\chi_{[0,t]}\Psi \E \dot{w}, & &\Psi^+_n \E \tilde{u}^+_n\xrightharpoonup[n\to\infty]{L^2(0,T;H)}\Psi \E \dot{u}.
\end{align*}
It remains to study the behaviour as $n\to \infty$ of the terms
\begin{equation*}
    \int_0^{t_n}(\dot{\tilde u}_n,\tilde w_n^+)_H\,\de s,\qquad\int_0^{t_n}(g^+_n,\tilde u_n^+-\tilde{ w}^+_n)_{H_N}\,\de s.
\end{equation*}
Thanks to formula~\eqref{dwn} we have
\begin{equation*}
    \int_0^{t_n}(\dot{\tilde u}_n,\tilde w_n^+)_H\,\de s=(\tilde u_n^+(t),\tilde w_n^+(t))_H-(u^1,\dot w(0))_H-\int_0^{t_n}(\tilde u_n^-,\dot{\tilde w}_n)_H\,\de s.
\end{equation*}
By arguing as before we hence deduce
\begin{equation}
 \int_0^{t_n}(\dot{\tilde u}_n,\tilde w_n^+)_H\,\de s\xrightarrow[n\to \infty]{}(\dot u(t),\dot w(t))_H-(u^1,\dot w(0))_H-\int_0^{t}(\dot u,\ddot w)_H\,\de s,
\end{equation}
thanks to~\eqref{newconv} and by these convergences
\begin{align*}
&\chi_{[0,t_n]}\dot{\tilde w}_n\xrightarrow[n\to\infty]{L^2(0,T;H)}\chi_{[0,t]}\ddot{w}, \quad \tilde{u}^-_n\xrightharpoonup[n\to\infty]{L^2(0,T;H)}\dot{u},\nonumber \\
&\|\tilde{w}_n^+(t)-\dot{w}(t)\|_H=\left\|\frac{w(j\tau_n)-w((j-1)\tau_n)}{\tau_n}-\dot{w}(t)\right\|_H=\left\|\dashint_{(j-1)\tau_n}^{j\tau_n}(\dot{w}(s)-\dot{w}(t))\,\de s\right\|_H\nonumber \\
&\hspace{2.63cm}\leq \dashint_{(j-1)\tau_n}^{j\tau_n}\|\dot{w}(s)-\dot{w}(t)\|_H\,\de s\xrightarrow[n \to \infty]{}0.
\end{align*}
Notice that in the last convergence we used the continuity of $w$ from $[0,T]$ in $H$. Similarly we have
\begin{equation*}
    \int_0^{t_n}(g_n^+,\tilde u_n^+-\tilde w_n^+)_{H_N}\,\de s=(g_n^+(t),u_n^+(t)- w_n^+(t))_{H_N}-(g(0),u^0- w(0))_{H_N}-\int_0^{t_n}(\dot g_n,u_n^--w_n^-)_{H_N}\,\de s
\end{equation*}
so that we get
\begin{equation}\label{se7}
\int_0^{t_n}(g_n^+,\tilde u_n^+-\tilde w_n^+)_{H_N}\,\de s\xrightarrow[n\to \infty]{}(g(t),u(t)- w(t))_{H_N}-(g(0),u^0- w(0))_{H_N}-\int_0^t(\dot g,u-w)_{H_N}\,\de s
\end{equation}
thanks to~\eqref{newconv}, the continuity of $s\mapsto g(s)$ in $H_N$, and the fact that
\begin{align*}
\chi_{[0,t_n]}\dot g_n\xrightarrow[n\to\infty]{L^2(0,T;H_N)}\chi_{[0,t]}\dot{g}, \qquad u_n^--w_n^-\xrightharpoonup[n\to\infty]{L^2(0,T;H_N)}u-w.
\end{align*}
By combining~\eqref{se1}--\eqref{se7}, we deduce
the energy--dissipation inequality~\eqref{eq:enin} for every $t\in(0,T]$. Finally, for $t=0$ the inequality trivially holds since $u(0)=u^0$ and $\dot u(0)=u^1$. 
\end{proof}

We now are in position to prove the validity of the initial conditions.

\begin{lemma}\label{lem:ic}
The weak solution $u\in\mathcal W$ to~\eqref{visco_elasto_system}--\eqref{boundary} of Lemma~\ref{lem:exis} satisfies
\begin{equation}\label{eq:incon}
    \lim_{t\to 0^+}u(t)= u^0\text{ in $V_T$},\quad\lim_{t\to 0^+}\dot u(t)=u^1\text{ in $H$}.
\end{equation}
In particular $u$ satisfies the initial conditions~\eqref{initials} in the sense of~\eqref{icd}.
\end{lemma}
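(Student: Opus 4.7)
The plan is the classical limsup--liminf sandwich based on the energy--dissipation inequality~\eqref{eq:enin}. We already know that $u\in C_w^0([0,T];V_T)$ with $u(0)=u^0$, $\dot u\in C_w^0([0,T];H)$ with $\dot u(0)=u^1$, and $u\in H^1(0,T;H)\hookrightarrow C^0([0,T];H)$, so that $u(t)\to u^0$ strongly in $H$. The task is to upgrade the weak convergences $\dot u(t)\rightharpoonup u^1$ in $H$ and $\E u(t)\rightharpoonup\E u^0$ in $H$ to strong ones.

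First I would take $\limsup_{t\to 0^+}$ in~\eqref{eq:enin}. The dissipation integral $\int_0^t(\mathbb B\Psi\E\dot u,\Psi\E\dot u)_H\,\de s$ vanishes by absolute continuity of the Lebesgue integral, since $\Psi\E\dot u\in L^2(0,T;H)$. A term-by-term inspection of~\eqref{totalwork} shows that $t\mapsto \mathcal W_{tot}(t)$ is continuous with $\mathcal W_{tot}(0)=0$: the pointwise-in-time pairings cancel exactly because $u(0)=u^0$ and $\dot u(0)=u^1$, while the time integrals tend to $0$ as integrals of $L^1$ functions over $[0,t]$. This yields $\limsup_{t\to 0^+}\mathcal E(t)\le\mathcal E(0)$. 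On the other hand, by weak lower semicontinuity of the two non-negative quadratic forms $v\mapsto\norm{v}_H^2$ and $v\mapsto(\mathbb C\E v,\E v)_H$ (cf.~\eqref{CB2}), one has separately
$$\liminf_{t\to 0^+}\tfrac{1}{2}\norm{\dot u(t)}_H^2\ge\tfrac{1}{2}\norm{u^1}_H^2,\qquad \liminf_{t\to 0^+}\tfrac{1}{2}(\mathbb C\E u(t),\E u(t))_H\ge\tfrac{1}{2}(\mathbb C\E u^0,\E u^0)_H.$$
Combined with the limsup bound above, the elementary fact that $\limsup(a+b)\le A+B$ together with $\liminf a\ge A$ and $\liminf b\ge B$ forces $\lim a=A$ and $\lim b=B$, so each piece of the energy converges.

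Since $\dot u(t)\rightharpoonup u^1$ in $H$ and $\norm{\dot u(t)}_H\to\norm{u^1}_H$, the Radon--Riesz property of the Hilbert space $H$ gives $\dot u(t)\to u^1$ strongly in $H$; analogously, $(\mathbb C\E u(t),\E u(t))_H\to(\mathbb C\E u^0,\E u^0)_H$ together with $\E u(t)\rightharpoonup\E u^0$ in $H$ and the equivalence of the $\mathbb C$-inner product with the Euclidean one (via~\eqref{CB2}) gives $\E u(t)\to\E u^0$ strongly in $H$. Combined with $u(t)\to u^0$ in $H$, this establishes $u(t)\to u^0$ in $V_T$, which is~\eqref{eq:incon}.

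Finally, for~\eqref{icd} I would use that $u^0\in V_0\subseteq V_t$ and that the distributional symmetric gradient of any element of $V_t$ agrees with that of its restriction to $\Omega\setminus\Gamma_T$, so that $\norm{u(t)-u^0}_{V_t}=\norm{u(t)-u^0}_{V_T}$; the integrand in~\eqref{icd} then tends pointwise to $0$ while remaining bounded on a small right neighborhood of $0$, so the averages tend to $0$. The main subtlety in this plan lies in the careful verification that $\mathcal W_{tot}(0)=0$ and that $\mathcal W_{tot}$ is continuous at $0$; everything else is a standard weak-versus-strong Hilbert space argument.
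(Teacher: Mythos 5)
Your proposal is correct and follows essentially the same argument as the paper: take the limit as $t\to 0^+$ in the energy--dissipation inequality~\eqref{eq:enin}, use the continuity of the right--hand side (together with $u(0)=u^0$, $\dot u(0)=u^1$, and the vanishing of the dissipation integral) to obtain $\limsup_{t\to 0^+}\mathcal E(t)\le\mathcal E(0)$, match it against the weak lower semicontinuity of the kinetic and elastic parts to conclude that each piece of the energy converges, and finally upgrade the already-known weak convergences $\dot u(t)\rightharpoonup u^1$ in $H$ and $\E u(t)\rightharpoonup \E u^0$ in $H$ to strong ones via the Radon--Riesz property. The only difference is that you spell out a couple of elementary steps (the sandwich argument for $\lim a$ and $\lim b$, the equality $\norm{\cdot}_{V_t}=\norm{\cdot}_{V_T}$ used in deducing~\eqref{icd}) that the paper leaves implicit.
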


\begin{proof}
By sending $t\rightarrow0^+$ into the energy--dissipation inequality ~\eqref{eq:enin} and using that $u\in C_w^0([0,T];V_T)$ and $\dot u\in C_w^0([0,T];H)$ we deduce
\begin{equation*}
    \mathcal E(0)\le\liminf_{t\to 0^+}\mathcal E(t)\le \limsup_{t\to 0^+}\mathcal E(t)\le \mathcal E(0),
\end{equation*}
since the right--hand side of~\eqref{eq:enin} is continuous in $t$, $u(0)=u^0$, and $\dot u(0)=u^1$.
Therefore there exists $\lim_{t\to 0^+}\mathcal E(t)=\mathcal E(0)$. By using the lower semicontinuity of $t\mapsto \norm{\dot u(t)}^2_H$ and $t\mapsto (\mathbb C\E u(t),\E u(t))_H$, we derive  
\begin{equation*}
\lim_{t\to 0^+}\norm{\dot u(t)}_H^2=\norm{u^1}_H^2,\quad \lim_{t\to 0^+}(\mathbb C\E u(t),\E u(t))_H=(\mathbb C\E u^0,\E u^0)_H.
\end{equation*}
Finally, since we have 
$$\dot u(t)\xrightharpoonup[t\to 0^+]{H}u^1,\quad  \E u(t)\xrightharpoonup[t\to 0^+]{H}\E u^0,$$ 
we deduce~\eqref{eq:incon}. In particular the functions $u\colon [0,T]\to V_T$ and $\dot u\colon [0,T]\to H$ are continuous at $t=0$, which implies~\eqref{icd}.
\end{proof}

We can finally prove Theorem~\ref{thm:mainresult}.

\begin{proof}[Proof of Theorem~\ref{thm:mainresult}]
It is enough to combine Lemmas~\ref{lem:exis} and~\ref{lem:ic}.
\end{proof}

\begin{remark}
We have proved Theorem~\ref{thm:mainresult} for the $d$-dimensional linear elastic case, namely when the displacement $u$ is a vector--valued function. The same result is true with identical proofs in the antiplane case, that is when the displacement $u$ is a scalar function and satisfies~\eqref{viscoelastic}.
\end{remark}

\section{Uniqueness}\label{visc_sec4}
In this section we investigate the uniqueness properties of system~\eqref{visco_elasto_system} with boundary and initial conditions~\eqref{boundary0}--\eqref{initials}. To this aim, we need to assume stronger regularity assumptions on the crack sets $\{\Gamma_t\}_{t\in[0,T]}$ and on the function $\Psi$. Moreover, we have to restrict our problem to the dimensional case $d=2$, since in our proof we need to construct a suitable family of diffeomorphisms which maps the time--dependent crack $\Gamma_t$ into a fixed set, and this can be explicitly done only for $d=2$ (see~\cite[Example 2.14]{DM-Luc}). 

We proceed in two steps; first, in Lemma~\ref{lem:uniq1} we prove a uniqueness result in every dimension $d$, but when the cracks are not increasing, that is $\Gamma_T=\Gamma_0$. Next, in Theorem~\ref{thm:uniq} we combine Lemma~\ref{lem:uniq1} with the finite speed of propagation theorem of~\cite{DMLT2} and the uniqueness result of~\cite{DMT2} to derive the uniqueness of a weak solution to~\eqref{visco_elasto_system}--\eqref{initials} in the case $d=2$.
 
Let us start with the following lemma, whose proof is similar to that one of~\cite[Proposition 2.10]{DMT2}.

\begin{lemma}\label{equiv}
	Let $u\in \mathcal{W}$ be a weak solution to~\eqref{visco_elasto_system}--\eqref{boundary} satisfying the initial condition $\dot u(0)=0$ in the following sense
	\begin{equation*}
	\lim_{h\to 0^+}\frac{1}{h}\int_0^h\norm{\dot u(t)}_H^2 =0.
	\end{equation*}
	Then $u$ satisfies
	\begin{equation*}
	\begin{aligned}
	&-\int_0^T(\dot u(t),\dot\varphi(t))_H \,\de t+\int_0^T(\mathbb C\E u(t),\E \varphi(t))_{H} \,\de t
	+\int_0^T(\mathbb B\Psi(t)\E \dot u(t),\Psi(t)\E \varphi(t))_{H} \,\de t\\
	&=\int_0^T(f(t),\varphi(t))_H \,\de t+\int_0^T(g(t),\varphi(t))_{H_N} \,\de t
	\end{aligned}
	\end{equation*}
	for every $\varphi\in\mathcal V^D$ such that $\varphi(T)=0$, where $\Psi \E \dot u$ is the function defined in~\eqref{Psiedotu}.
\end{lemma}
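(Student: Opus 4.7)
The plan is to extend the class of admissible test functions by removing the restriction $\varphi(0)=0$, via a cut-off approximation in time near $t=0$. Concretely, I will take an arbitrary $\varphi\in\mathcal V^D$ with $\varphi(T)=0$ and multiply it by a Lipschitz cut-off $\eta_\epsilon\colon[0,T]\to[0,1]$ defined by $\eta_\epsilon(t):=\min\{t/\epsilon,1\}$. Setting $\varphi_\epsilon(t):=\eta_\epsilon(t)\varphi(t)$, one checks that $\varphi_\epsilon\in\mathcal V^D$, that $\varphi_\epsilon(0)=\varphi_\epsilon(T)=0$ (for $\epsilon<T$), and that
\begin{equation*}
\dot\varphi_\epsilon(t)=\tfrac{1}{\epsilon}\chi_{(0,\epsilon)}(t)\,\varphi(t)+\eta_\epsilon(t)\dot\varphi(t)\quad\text{for a.e. }t\in(0,T).
\end{equation*}
Since $\varphi_\epsilon$ is admissible in Definition~\ref{def:weaksol}, I can plug it into~\eqref{weakform} (using the compact equivalent form given in the remark after that definition, involving $\Psi\E\dot u$) and then let $\epsilon\to 0^+$.

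Next, I will split each integral over $(0,T)$ as $\int_0^\epsilon+\int_\epsilon^T$. On $(\epsilon,T)$ one has $\varphi_\epsilon=\varphi$ and $\dot\varphi_\epsilon=\dot\varphi$, so those contributions converge by dominated convergence to the analogous integrals taken on the whole $(0,T)$. On $(0,\epsilon)$, every term except the one with $\dot u$ is easily shown to vanish as $\epsilon\to0^+$: indeed, since $\E\varphi,\dot\varphi\in L^2(0,T;H)$ and $u,\Psi\dot u\in L^2(0,T;V_T)$, $f\in L^2(0,T;H)$, $g\in L^2(0,T;H_N)$ (the latter with trace estimate~\eqref{trace}), Cauchy--Schwarz combined with $\eta_\epsilon\le 1$ forces each such remainder to be bounded by $C\|\cdot\|_{L^2((0,\epsilon);\cdot)}\to 0$.

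The only delicate term is
\begin{equation*}
-\int_0^\epsilon\bigl(\dot u(t),\dot\varphi_\epsilon(t)\bigr)_H\de t
=-\frac{1}{\epsilon}\int_0^\epsilon\bigl(\dot u(t),\varphi(t)\bigr)_H\de t-\int_0^\epsilon\eta_\epsilon(t)\bigl(\dot u(t),\dot\varphi(t)\bigr)_H\de t.
\end{equation*}
The second summand tends to $0$ by dominated convergence. For the first, I will use the averaged initial condition together with the continuous embedding $\mathcal V^D\hookrightarrow H^1(0,T;H)\hookrightarrow C^0([0,T];H)$, which gives $\sup_{t\in[0,\epsilon]}\|\varphi(t)\|_H\le M<\infty$ for $\epsilon$ small. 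Then Cauchy--Schwarz yields
\begin{equation*}
\left|\frac{1}{\epsilon}\int_0^\epsilon(\dot u(t),\varphi(t))_H\de t\right|
\le\left(\frac{1}{\epsilon}\int_0^\epsilon\|\dot u(t)\|_H^2\de t\right)^{\!1/2}\!\left(\frac{1}{\epsilon}\int_0^\epsilon\|\varphi(t)\|_H^2\de t\right)^{\!1/2}\!\!\le M\left(\frac{1}{\epsilon}\int_0^\epsilon\|\dot u(t)\|_H^2\de t\right)^{\!1/2}\!\!,
\end{equation*}
and the right-hand side tends to $0$ thanks to the hypothesis $\lim_{h\to0^+}\tfrac{1}{h}\int_0^h\|\dot u(t)\|_H^2\de t=0$. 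This is the one place where the initial datum $\dot u(0)=0$ is genuinely used, and I expect it to be the only real obstacle; everything else is dominated/Cauchy--Schwarz bookkeeping. Collecting all the limits yields the identity claimed in the statement, completing the proof.
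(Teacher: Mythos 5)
Your proof is correct and follows essentially the same route as the paper: the paper also defines $\varphi_\epsilon(t)=\tfrac{t}{\epsilon}\varphi(t)$ on $[0,\epsilon]$ and $\varphi_\epsilon=\varphi$ on $[\epsilon,T]$, uses it as a test function in~\eqref{weakform}, and passes to the limit $\epsilon\to 0^+$ term by term, with the averaged initial condition $\lim_{h\to 0^+}\tfrac{1}{h}\int_0^h\|\dot u\|_H^2\,\de t=0$ supplying exactly the control you identified as the crux for the term $-\tfrac{1}{\epsilon}\int_0^\epsilon(\dot u,\varphi)_H\,\de t$. The only cosmetic difference is that the paper handles the $\dot u$, $\mathbb C$, and $f$ terms by reference to \cite[Proposition 2.10]{DMT2} and spells out only the $\mathbb B$ and $g$ terms, whereas you give the full argument directly; the substance is identical.
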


\begin{proof}
We fix $\varphi\in \mathcal V^D$ with $\varphi(T)=0$ and for every $\e>0$ we define the following function
\begin{equation*}
\varphi_{\epsilon}(t):=
\begin{cases}
\frac{t}{\epsilon}\varphi(t) &t\in [0,\epsilon],\\
\varphi(t)&t\in [\epsilon,T].
\end{cases}
\end{equation*}
We have that $\varphi_{\epsilon}\in \mathcal V^D$ and $\varphi_{\epsilon}(0)=\varphi_{\epsilon}(T)=0$, so we can use $\varphi_{\epsilon}$ as test function in~\eqref{weakform}. By proceeding as in~\cite[Proposition 2.10]{DMT2} we obtain
\begin{align*}
&\lim_{\e\to 0^+}\int_0^T(\dot u(t),\dot\varphi_{\epsilon}(t))_H \,\de t=\int_0^T(\dot u(t),\dot\varphi(t))_H \,\de t,\\
&\lim_{\e\to 0^+}\int_0^T(\mathbb{C} \E u(t),\E \varphi_{\epsilon}(t))_H \,\de t= \int_0^T(\mathbb{C} \E u(t),\E \varphi(t))_H \,\de t,\\
&\lim_{\e\to 0^+}\int_0^T(f(t),\varphi_{\epsilon}(t))_H \,\de t= \int_0^T(f(t),\varphi(t))_H \,\de t.
\end{align*}
It remains to consider the terms involving $\mathbb B$ and $g$. 
	We have
\begin{align*}	
&\int_0^T(\mathbb B\Psi(t)\E \dot u(t),\Psi(t)\E \varphi_{\epsilon}(t))_{H} \,\de t=\int_0^{\epsilon}(\mathbb B\Psi(t)\E \dot u(t),\frac{t}{\e}\Psi(t)\E \varphi(t))_{H} \,\de t+\int_{\epsilon}^{T}(\mathbb B\Psi(t)\E \dot u(t),\Psi(t)\E \varphi(t))_{H} \,\de t,\\
&\int_0^T(g(t),\varphi_{\epsilon}(t))_{H_N} \,\de t =\int_0^{\epsilon}(g(t),\frac{t}{\e}\varphi(t))_{H_N} \,\de t+\int_{\epsilon}^{T}(g(t),\varphi(t))_{H_N} \,\de t,
\end{align*}
hence by the dominated convergence theorem we get
\begin{align*}
&\int_{\epsilon}^{T}(\mathbb B\Psi(t)\E \dot u(t),\Psi(t)\E \varphi(t))_{H} \,\de t\xrightarrow[\epsilon\to 0^+]{}\int_{0}^{T}(\mathbb B\Psi(t)\E \dot u(t),\Psi(t)\E \varphi(t))_{H} \,\de t,\\ 
&\left |\int_0^{\epsilon}(\mathbb B\Psi(t)\E \dot u(t),\frac{t}{\e}\Psi(t)\E \varphi(t))_{H} \,\de t\right|\leq \norm{\mathbb B}_\infty\norm{\Psi}_\infty\int_0^{\epsilon}\norm{\Psi(t)\E \dot u(t)}_H\norm{\E \varphi(t)}_{H} \,\de t\xrightarrow[\e\to 0^+]{}0,\\
&\int_{\epsilon}^{T}(g(t),\varphi(t))_{H_N} \,\de t\xrightarrow[\epsilon\to 0^+]{}\int_{0}^{T}(g(t),\varphi(t))_{H_N} \,\de t,\\
&\left |\int_0^{\epsilon}(g(t),\frac{t}{\e}\varphi(t))_{H_N} \,\de t\right|\leq \int_0^{\epsilon}\norm{g(t)}_{H_N}\norm{\varphi(t)}_{H_N} \,\de t\xrightarrow[\e\to 0^+]{}0.
\end{align*}
By combining together all the previous convergences we get the thesis.
\end{proof}

We now state the uniqueness result in the case of a fixed domain, that is $\Gamma_T=\Gamma_0$. We follow the same ideas of~\cite{Ld}, and we need to assume
\begin{equation}\label{Psi2}
\Psi\in \Lip([0,T]\times \overline\Omega),\quad \nabla\dot\Psi\in L^\infty((0,T)\times \Omega;\R^d),
\end{equation}
while on $\Gamma_0$ we do not require any further hypotheses.

\begin{lemma}[Uniqueness in a fixed domain]\label{lem:uniq1}
Assume~\eqref{Psi2} and $\Gamma_T=\Gamma_0$. Then the viscoelastic dynamic system~\eqref{visco_elasto_system} with boundary and initial conditions~\eqref{boundary0}--\eqref{initials} $($the latter in the sense of~\eqref{icd}$)$ has a unique weak solution.
\end{lemma}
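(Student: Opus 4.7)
The plan is to set $u := u_1 - u_2$, the difference of two weak solutions sharing the same data, and show $u \equiv 0$. Then $u \in \mathcal W$ with $u \in \mathcal V^D$ solves the homogeneous version of \eqref{weakform} with vanishing initial data in the sense of \eqref{icd}; in particular Lemma~\ref{equiv} applies, so I can test \eqref{weakform} against any $\varphi \in \mathcal V^D$ with $\varphi(T)=0$ (not necessarily $\varphi(0)=0$). Fix $s \in (0,T]$ and take the Ladyzhenskaya-type test function
\[
\psi(t) := \int_t^s u(r)\,\de r \;\;\text{for } t\in[0,s], \qquad \psi(t) := 0 \;\;\text{for } t\in(s,T],
\]
which belongs to $\mathcal V^D$ thanks to $V_t^D=V_0^D$ in the fixed-domain case, satisfies $\psi(T)=0$, and has $\dot\psi = -u\chi_{[0,s]}$. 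Setting $q(t):=\int_0^t\E u(r)\,\de r$, straightforward manipulations reduce the inertial and elastic contributions to $\tfrac12\|u(s)\|_H^2$ and $\tfrac12(\mathbb C q(s),q(s))_H$ respectively, after using $u(0)=0$.

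The crux is the viscous term, and here the enhanced hypothesis \eqref{Psi2} is essential. I would first verify that $\Psi\,\E u\in H^1(0,T;H)$ with
\[
\frac{\de}{\de t}(\Psi\,\E u) = \dot\Psi\,\E u + \Psi\,\E\dot u \quad \text{in } L^2(0,T;H),
\]
where $\Psi\,\E\dot u$ is the function defined in \eqref{Psiedotu}. This is obtained by writing $\Psi\,\E u=\E(\Psi u)-\nabla\Psi\odot u$ and observing that $\Psi u\in H^1(0,T;V_0)$: indeed $\Psi\dot u\in L^2(0,T;V_0)$ from $u\in\mathcal W$, while $\E(\dot\Psi\,u)=\dot\Psi\,\E u+\nabla\dot\Psi\odot u\in L^2(0,T;H)$ thanks to $\nabla\dot\Psi\in L^\infty$. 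Integrating by parts in time in the viscous term, using $\E\psi(s)=0$ and $\Psi(0)\,\E u(0)=0$ (the latter following from the initial condition \eqref{icd} and continuity of $\Psi\,\E u$ into $H$) to kill the boundary contributions, produces
\[
\int_0^s(\mathbb B\,\Psi\,\E\dot u,\,\Psi\,\E\psi)_H\,\de t \;=\; \int_0^s(\mathbb B\,\Psi\,\E u,\Psi\,\E u)_H\,\de t - J(s),
\]
where $J(s)$ is a cross term linear in $\dot\Psi$ and $\Psi\,\E u$.

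Collecting terms yields the identity
\[
\tfrac12\|u(s)\|_H^2 + \tfrac12(\mathbb C q(s),q(s))_H + \int_0^s(\mathbb B\,\Psi\,\E u,\Psi\,\E u)_H\,\de t = J(s).
\]
Bounding $|J(s)| \le \varepsilon\int_0^s\|\Psi\,\E u\|_H^2\,\de t + C_\varepsilon\int_0^s\|q(s)-q(t)\|_H^2\,\de t$ via Cauchy--Schwarz and Young with $\varepsilon<\lambda_2$ lets me absorb the viscous term, and then $\|q(s)-q(t)\|_H^2 \le 2\|q(s)\|_H^2+2\|q(t)\|_H^2$ combined with the coercivity of $\mathbb C$ gives, for some $C>0$,
\[
\tfrac12\|u(s)\|_H^2 + \tfrac{\lambda_1}{2}\|q(s)\|_H^2 \le C\,s\,\|q(s)\|_H^2 + C\int_0^s\|q(t)\|_H^2\,\de t.
\]
For $s\le s_\ast:=\lambda_1/(8C)$ the first right-hand term is absorbed and Gronwall's lemma forces $u\equiv 0$ on $[0,s_\ast]$.

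To cover $[0,T]$ I would iterate. Given $u\equiv 0$ on $[0,s_0]$, the same test function $\psi$ is automatically constant in $t$ on $[0,s_0]$ (so $\dot\psi\equiv 0$ there) and all integrands involving $\E u$ vanish on $[0,s_0]$; hence the identity above effectively reduces to one on $[s_0,s]$ with $q$ replaced by $q_{s_0}(t):=\int_{s_0}^t\E u\,\de r$, and the same estimate yields $u\equiv 0$ on $[s_0,s_0+s_\ast]$. Finitely many such steps cover $[0,T]$. The main obstacle is the viscous term: without \eqref{Psi2} the identity $\tfrac{\de}{\de t}(\Psi\,\E u)=\dot\Psi\,\E u+\Psi\,\E\dot u$ cannot be given rigorous meaning in $L^2(0,T;H)$, and so the integration by parts producing the crucial nonnegative contribution $\int_0^s(\mathbb B\,\Psi\,\E u,\Psi\,\E u)_H\,\de t$ would not be available.
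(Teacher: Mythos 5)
Your proposal is correct and follows essentially the same strategy as the paper's proof: take the difference $u=u_1-u_2$, apply Lemma~\ref{equiv}, test against the Ladyzhenskaya-type function $\int_t^s u\,\de r$ (yours is the negative of the paper's $\varphi_s$, which is immaterial), establish $\Psi\E u\in H^1(0,T;H)$ with $\tfrac{\de}{\de t}(\Psi\E u)=\dot\Psi\E u+\Psi\E\dot u$ using \eqref{Psi2} so that $\Psi(0)\E u(0)=0$, integrate by parts in time in the viscous term so a nonnegative dissipation appears on the left and only a cross term in $\dot\Psi$ survives on the right, absorb, apply Gronwall, and iterate. The only cosmetic difference is that you bound the cross term $J(s)$ via an $\varepsilon$-weighted Young inequality and absorb a fraction of $\int_0^s(\mathbb B\Psi\E u,\Psi\E u)_H$, whereas the paper uses the $\mathbb B$-weighted Cauchy--Schwarz inequality to cancel that term exactly; both yield the same Gronwall-type estimate with $s_\ast$ depending only on $\mathbb C$, $\mathbb B$, $\Psi$.
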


\begin{proof}
Let $u_1,u_2\in \mathcal{W}$ be two weak solutions to~\eqref{visco_elasto_system}--\eqref{boundary} with initial conditions~\eqref{initials}. The function $u:=u_1-u_2$ satisfies 
\begin{equation}\label{icd0}
\frac{1}{h}\int_0^h(\norm{u(t)}_{V_t}^2+\norm{\dot u(t)}_H^2) \,\de t\xrightarrow[h\to 0^+]{} 0,
\end{equation}
hence by Lemma~\ref{equiv} it solves
\begin{equation}\label{eq:uniq}
-\int_0^T(\dot u(t),\dot\varphi(t))_H \,\de t+\int_0^T(\mathbb C\E u(t),\E \varphi(t))_{H} \,\de t
+\int_0^T(\mathbb B\Psi(t)\E \dot u(t),\Psi(t)\E \varphi(t))_{H} \,\de t=0
\end{equation}
for every $\varphi\in\mathcal V^D$ such that $\varphi(T)=0$. We fix $s\in (0,T]$ and consider the function
\begin{equation*}
\varphi_{s}(t):=
\begin{cases}
-\int_t^s u(\tau)\de\tau \quad &t\in [0,s],\\
0\quad &t\in [s,T].
\end{cases}
\end{equation*}
Since $\varphi_s\in\mathcal V^D$ and $\varphi_s(T)=0$, we can use it as test function in~\eqref{eq:uniq} to obtain
\begin{equation*}
-\int_0^s(\dot u(t),u(t))_H \,\de t+\int_0^s(\mathbb C\E \dot{\varphi_s}(t),\E \varphi_s(t))_{H} \,\de t
+\int_0^s(\mathbb B\Psi(t)\E \dot u(t),\Psi(t)\E \varphi_s(t))_{H} \,\de t=0.
\end{equation*}
In particular we deduce 
\begin{equation*}
-\frac{1}{2}\int_0^s\frac{\de}{\de t}\|u(t)\|^2_H \,\de t+\frac{1}{2}\int_0^s\frac{\de}{\de t}(\mathbb C\E \varphi_s(t),\E \varphi_s(t))_{H} \,\de t
+\int_0^s(\mathbb B\Psi(t)\E \dot u(t),\Psi(t)\E \varphi_s(t))_{H} \,\de t=0,
\end{equation*}
which implies
\begin{equation}\label{uni0}
\frac{1}{2}\|u(s)\|^2_H+\frac{1}{2}(\mathbb C\E \varphi_s(0),\E \varphi_s(0))_{H}=\int_0^s(\mathbb B\Psi (t)\E \dot u(t) ,\Psi(t) \E \varphi_s(t) )_{H} \,\de t,
\end{equation}
since $u(0)=0=\varphi_s(s)$. From the distributional point of view the following equality holds
\begin{equation}\label{distr}
\frac{\de}{\de t}(\Psi \E u)=\dot{\Psi}\E u+\Psi \E \dot{u}\in L^2(0,T;H),
\end{equation}
indeed, for all $v\in C^{\infty}_c(0,T;H)$ we have
\begin{align*}
&\int_0^T(\frac{\de}{\de t}(\Psi(t) \E u(t)),v(t))_H\de t=-\int_0^T\left(\Psi(t) \E u(t),\dot{v}(t)\right)_H\,\de t\\
&=-\int_0^T\left(\E (\Psi(t) u(t))-\nabla \Psi(t)\odot u(t),\dot{v}(t)\right)_H\,\de t\\
&=\int_0^T (\E (\dot{\Psi}(t)u(t))+\E (\Psi(t) \dot{u}(t)),v(t))_H\,\de t-\int_0^T (\nabla \dot{\Psi}(t)\odot u(t)+\nabla \Psi(t)\odot \dot{u}(t),v(t))_H\,\de t\\
&=\int_0^T(\dot{\Psi}(t) \E u(t),v(t))_H\,\de t+\int_0^T(\Psi(t) \E \dot{u}(t),v(t))_H\,\de t.
\end{align*}
In particular $\Psi \E u\in H^1(0,T;H)\subset C^0([0,T],H)$, so that by~\eqref{icd0}
\begin{equation*}
\|\Psi(0)\E u(0)\|^2_H=\lim_{h\to 0}\frac{1}{h}\int_0^h  \|\Psi(t)\E u(t)\|^2_H \de t\leq C\lim_{h\to 0}\frac{1}{h}\int_0^h  \|u(t)\|^2_{V_t} \de t=0
\end{equation*}	which yields $\Psi(0)\E u(0)=0$. Thanks to~\eqref{distr} and to property $\Psi u\in H^1(0,T;H)$, we deduce
\begin{equation*}
	\begin{aligned}
	\frac{\de}{\de t}\left(\mathbb{B}\Psi \E u ,\Psi \E \varphi_s \right)_H &=(\mathbb{B}\dot{\Psi} \E u ,\Psi \E \varphi_s )_H+(\mathbb{B}\Psi \E \dot{ u} ,\Psi \E \varphi_s )_H+(\mathbb{B}\Psi \E u ,\dot{\Psi} \E \varphi_s )_H+(\mathbb{B}\Psi \E u ,\Psi \E \dot{\varphi}_s )_H\\
	&=2(\mathbb{B}\Psi \E u ,\dot\Psi \E \varphi_s )_H+(\mathbb{B}\Psi \E \dot{ u} ,\Psi \E \varphi_s )_H+(\mathbb{B}\Psi \E u ,\Psi \E \dot{\varphi}_s )_H,
	\end{aligned}
	\end{equation*}
	and by integrating on $[0,s]$ we get
	\begin{align*}
	&\int_0^s  (\mathbb{B}\Psi(t)\E \dot{ u}(t),\Psi(t)\E \varphi_s(t))_H\,\de t\\
	&=\int_0^s \left[\frac{\de}{\de t}(\mathbb{B}\Psi(t)\E u(t),\Psi(t)\E \varphi_s(t))_H-2 (\mathbb{B}\Psi(t)\E u(t),\dot\Psi(t)\E \varphi_s(t))_H- (\mathbb{B}\Psi(t)\E \dot{\varphi}_s(t),\Psi(t)\E \dot{\varphi}_s(t))_H\right]\de t\\
	&\leq (\mathbb{B}\Psi(s)\E u(s),\Psi(s)\E \varphi_s(s))_H-(\mathbb{B}\Psi(0)\E u(0),\Psi(0)\E \varphi_s(0))_H\\
	&\quad+\int_0^s\left[2(\mathbb{B}\Psi(t)\E u(t),\Psi(t)\E u(t))^{\frac{1}{2}}_H(\mathbb{B}\dot{\Psi}(t)\E \varphi_s(t),\dot{\Psi}(t)\E \varphi_s(t))^{\frac{1}{2}}_H- (\mathbb{B}\Psi(t)\E \dot{\varphi}_s(t),\Psi(t)\E \dot{\varphi}_s(t))_H\right]\de t\\
	&\leq \int_0^s\left[ (\mathbb{B}\Psi(t)\E u(t),\Psi(t)\E u(t))_H+(\mathbb{B}\dot{\Psi}(t)\E \varphi_s(t),\dot{\Psi}(t)\E \varphi_s(t))_H-(\mathbb{B}\Psi(t)\E \dot{\varphi}_s(t),\Psi(t)\E \dot{\varphi}_s(t))_H\right]\de t\\
	&\leq \norm{\mathbb B}_\infty\norm{\dot\Psi}_\infty^2\int_0^s\|\E \varphi_s(t)\|_H^2\de t,
	\end{align*}
	since $\E \varphi_s(s)=0=\Psi(0)\E u(0)$ and $\E \dot \varphi_s=\E u$ in $(0,s)$. By combining the previous inequality with~\eqref{uni0} and using the coercivity of the tensor $\mathbb{C}$, we derive
\begin{align*}
\frac{\lambda_1}{2}\|\E \varphi_s(0)\|^2_H+\frac{1}{2}\|u(s)\|_H^2&\le \frac{1}{2}(\mathbb C\E \varphi_s(0),\E \varphi_s(0))_H+\frac{1}{2}\|u(s)\|_H^2\le \norm{\mathbb B}_\infty\norm{\dot\Psi}_\infty^2\int_0^s\|\E \varphi_s(t)\|_H^2\de t.
\end{align*}

Let us set $\xi(t)\coloneqq \int_0^t u(\tau)\de \tau$, then
\begin{align*}
&\|\E \varphi_s(0)\|_H^2=\|\E \xi(s)\|^2_H,\quad\|\E \varphi_s(t)\|^2_H=\|\E \xi(t)-\E \xi(s)\|_H^2\le 2\|\E \xi(t)\|^2_H+2\|\E \xi(s)\|_H^2,
\end{align*}
from which we deduce
\begin{equation}\label{ul}
\frac{\lambda_1}{2}\|\E \xi(s)\|^2_H+\frac{1}{2}\norm{u(s)}_H^2\le C\int_0^s\|\E \xi(t)\|^2_H\de t+Cs\|\E \xi(s)\|^2_H,
\end{equation}
where $C:=2\norm{\mathbb B}_\infty\norm{\dot\Psi}_\infty^2$. Therefore, if we set $s_0\coloneqq \frac{\lambda_1}{4C}$, for all $s\le s_0$ we obtain 
\begin{equation*}
\frac{\lambda_1}{4}\|\E \xi(s)\|^2_H\le \left(\frac{\lambda_1}{2}- Cs\right)\|\E \xi(s)\|^2_H\le C\int_0^s\|\E \xi(t)\|^2_H\de t.
\end{equation*}
By Gronwall's lemma the last inequality implies $\E \xi(s)=0$ for all $s\le s_0$. Hence, thanks to~\eqref{ul} we get $\|u(s)\|^2_H\le0$ for all $s\le s_0$, which yields $u(s)=0$ for all $s\le s_0$. Since $s_0$ depends only on $\mathbb C$, $\mathbb B$, and $\Psi$, we can repeat this argument starting from $s_0$, and with a finite number of steps we obtain $u\equiv 0$ on $[0,T]$.
\end{proof}

In order to prove our uniqueness result in the case of a moving crack we need two auxiliary results, which are~\cite[Theorem 6.1]{DMLT} and ~\cite[Theorem 4.3]{DMT2}. For the sake of the readers, we rewrite below the statements without proof.

The first one (\cite[Theorem 6.1]{DMLT}) is a generalization of the well--known result of finite speed of propagation for the wave equation. Given an open bounded set $U\subset\R^d$, we define by $\partial_LU$ the Lipschitz part of the boundary $\partial U$, which is the collection of points $x\in \partial U$ for which there exist an orthogonal coordinate system $y_1,\dots,y_d$, a neighborhood $V$ of $x$ of the form $A\times I$, with $A$ open in $\R^{d-1}$ and $I$ open interval in $\R$, and a Lipschitz function $g\colon A\to I$, such that $V\cap U:=\{(y_1,\dots,y_d)\in V:y_d<g(y_1,\dots,y_{d-1})\}$. Moreover, given a Borel set $S\subseteq\partial_LU$, we define
\begin{equation*}
    H_S(U;\R^d):=\{u\in H^1(U;\R^d):u=0\text{ on }S\}.
\end{equation*}
Notice that $H_S(U;\R^d)$ is a Hilbert space, and we denote its dual by $H^{-1}_S(U;\R^d).$

\begin{theorem}[Finite speed of propagation]\label{finprop}
Let $U\subset\R^d$ be an open bounded set and let $\partial_LU$ be the Lipschitz part of $\partial U$. Let $S_0$ and $S_1$ be two Borel sets with $S_0\subseteq S_1\subseteq\partial_LU$, and let $\mathbb C\colon U\to\mathscr L(\R^{d\times d}_{sym};\mathbb R^{d\times d}_{sym})$ be a fourth-order tensor satisfying~\eqref{CB1}--\eqref{CB2}. Let
\begin{equation*}
    u\in L^2(0,T;H^1_{S_0}(U;\R^d))\cap H^1(0,T;L^2(U;\R^d))\cap H^2(0,T;H^{-1}_{S_1}(U;\R^d))
\end{equation*}
be a solution to
\begin{equation*}
    \spr{\ddot u(t)}{\psi}_{H^{-1}_{S_1}(U;\R^d)}+(\mathbb C\E u(t),\E \psi)_{L^2(U;\R^{d\times d}_{sym})}=0\quad\text{for every }\psi\in H^1_{S_1}(U;\R^d),
\end{equation*}
with initial conditions $u(0)=0$ and $\dot u(0)=0$ in the sense of $L^2(U;\R^d)$ and $H^{-1}_{S_1}(U;\R^d)$, respectively. Then 
\begin{equation*}
    u(t)=0\quad\text{a.e. in }U_t:=\{x\in U:\dist(x,S_1\setminus S_0)>t\sqrt{\norm{\mathbb C}_\infty}\}
\end{equation*}
for every $t\in[0,T]$.
\end{theorem}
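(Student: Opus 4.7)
The plan is to run a classical light--cone energy estimate, adapted to the mixed Dirichlet/Neumann Lipschitz setting. Set $c := \sqrt{\norm{\mathbb C}_\infty}$ and, for $\tau \in [0,T]$, work with the shrinking family
\begin{equation*}
U_\tau := \{x \in U : \dist(x, S_1 \setminus S_0) > \tau c\}
\end{equation*}
together with the local energy
\begin{equation*}
e(\tau) := \tfrac{1}{2}\int_{U_\tau}\bigl(|\dot u(\tau)|^2 + \mathbb C \E u(\tau)\cdot \E u(\tau)\bigr)\,\de x.
\end{equation*}
Since $u(0) = 0$ and $\dot u(0) = 0$, one has $e(0) = 0$; the goal is to prove that $\tau \mapsto e(\tau)$ is non--increasing, so that $e(\tau) = 0$ for every $\tau \in [0,t]$. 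Coercivity of $\mathbb C$ then forces $\dot u = 0$ a.e.\ on the set $\{(s,x) : 0 \le s \le t,\, x \in U_s\}$, and since $U_t \subseteq U_s$ for $s \le t$, a Fubini argument combined with the fundamental theorem of calculus applied to $u \in H^1(0,T;L^2(U;\R^d))$ yields $u(t,x) = u(0,x) + \int_0^t \dot u(s,x)\,\de s = 0$ for a.e.\ $x \in U_t$.

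The heart of the argument is a moving--domain identity. Writing $\Sigma_\tau := \partial U_\tau \cap U$, which sweeps into $U_\tau$ at normal speed $c$, Reynolds' transport formula together with the equation $\ddot u = \div(\mathbb C \E u)$ read distributionally gives, after integration by parts on $U_\tau$,
\begin{equation*}
\tfrac{\de}{\de\tau}e(\tau) = \int_{\partial U_\tau}(\mathbb C \E u)\nu\cdot\dot u\,\de\mathcal H^{d-1} - \tfrac{c}{2}\int_{\Sigma_\tau}\bigl(|\dot u|^2 + \mathbb C \E u \cdot \E u\bigr)\,\de\mathcal H^{d-1}.
\end{equation*}
The portions of $\partial U_\tau$ lying in $\partial U$ contribute nothing to the first integral: on $S_0$ the trace of $\dot u$ vanishes; $\partial U_\tau$ does not meet $S_1 \setminus S_0$ by construction for $\tau > 0$; and on $\partial_L U \setminus S_1$ the natural Neumann condition $(\mathbb C\E u)\nu = 0$ is built into the weak formulation (test functions are only forced to vanish on $S_1$). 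One is thus left with the flux through $\Sigma_\tau$, which by Young's inequality and the definition of $c$ satisfies pointwise
\begin{equation*}
|(\mathbb C \E u)\nu\cdot\dot u| \le \tfrac{c}{2}|\dot u|^2 + \tfrac{1}{2c}|(\mathbb C \E u)\nu|^2 \le \tfrac{c}{2}\bigl(|\dot u|^2 + \mathbb C \E u \cdot \E u\bigr),
\end{equation*}
the last step using the elementary bound $|(\mathbb C A)\nu|^2 \le \norm{\mathbb C}_\infty\,\mathbb C A \cdot A$ valid for symmetric $A$ and positive--definite symmetric $\mathbb C$. Hence $\de e/\de\tau \le 0$, and integration from $0$ to $t$ yields $e(t) \le 0$.

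The principal obstacle is that \emph{none} of this computation is directly justified: $u$ is only $H^1$ in space with $\ddot u \in L^2(0,T;H^{-1}_{S_1}(U;\R^d))$, so pointwise differentiation of $e(\tau)$, integration by parts on the Lipschitz domain $U_\tau$, and traces on the distance--level set $\Sigma_\tau$ all require approximation. The standard fix is a two--layer regularization: first mollify $u$ in time to obtain $u_\varepsilon$ smooth enough to serve as a test function and to permit classical differentiation of the corresponding $e_\varepsilon$; second, replace $U_\tau$ by a smoothly shrunk approximation $U_\tau^\delta$ on which the divergence theorem and the trace inequality \eqref{eq:vis_trace} apply. One proves the monotonicity of $e_\varepsilon$ on $U_\tau^\delta$, sends $\delta \to 0^+$ using the Lipschitz structure of $\dist(\cdot, S_1 \setminus S_0)$ and the coarea formula, and finally $\varepsilon \to 0^+$ using the available regularity of $u$. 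Identifying the boundary traces on $\Sigma_\tau$ and rigorously verifying the cancellation on $\partial_L U \setminus S_1$ under this double limit is the delicate technical step; this is also precisely where the Lipschitz hypothesis on $\partial_L U$ (and not merely the existence of a trace) is needed.
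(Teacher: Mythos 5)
The paper itself does not prove Theorem~\ref{finprop}: the ``proof'' is the single line ``See~\cite[Theorem 6.1]{DMLT}'' (the surrounding text even points to \cite{DMLT2}, an apparent citation slip). There is thus no internal argument to compare your proposal against, and I assess it on its own terms.

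The light--cone energy method you use is the standard and correct route to finite speed of propagation, and the formal calculation is accurate. The decomposition of $\partial U_\tau$ into its pieces on $S_0$, near $S_1\setminus S_0$, on $\partial_L U\setminus S_1$, and the interior surface $\Sigma_\tau$ is reasoned correctly, and the pointwise estimate $|(\mathbb C A)\nu|^2\le\norm{\mathbb C}_\infty\,\mathbb C A\cdot A$ for symmetric positive--definite $\mathbb C$ is valid (with $\mathbb C=\mathbb C^{1/2}\mathbb C^{1/2}$ one has $\mathbb C A\cdot A=|\mathbb C^{1/2}A|^2$ and $|(\mathbb C A)\nu|\le|\mathbb C A|\le\norm{\mathbb C}_\infty^{1/2}|\mathbb C^{1/2}A|$). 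The reduction from ``$\dot u=0$ a.e.\ on the cone'' to ``$u(t)=0$ a.e.\ on $U_t$'' via Fubini and the fundamental theorem of calculus is also sound, and conveniently tolerates an a.e.--in--$\tau$ version of the monotonicity of $e$.

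What is missing is precisely the step you relegate to the closing paragraph: at the stated regularity none of the identities are licensed, and the sketched fix has several loose ends that are the real content. (i) The superlevel sets $U_\tau$ of a Lipschitz distance function have finite perimeter only for a.e.\ $\tau$; the divergence theorem and the ``sweeps at normal speed $c$'' Reynolds identity must be justified in that weak setting (coarea plus $|\nabla\dist|=1$ a.e.), and the integrand $\mathbb C\E u\,\dot u$ is merely $L^1$ in $x$ for a.e.\ $t$, so the trace on $\Sigma_\tau$ is not free. (ii) Mollifying $u$ in time at least commutes with the equation since $\mathbb C$ is time--independent, but then $e_\varepsilon(0)\ne 0$, and showing $e_\varepsilon(0)\to 0$ from the given initial conditions ($u(0)=0$ in $L^2$, $\dot u(0)=0$ in $H^{-1}_{S_1}$) requires a separate lemma establishing strong continuity of the energy at $t=0$. (iii) The Neumann cancellation on $\partial_L U\setminus S_1$ is encoded only through the admissible test class $H^1_{S_1}(U;\R^d)$; once you restrict to $U_\tau^\delta$ you are no longer testing against that class, so the cancellation is not automatic and must be recovered by choosing a global test function of the form (cutoff)$\cdot u_\varepsilon$ and controlling the commutator terms from the cutoff. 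None of these points is fatal, but together they are exactly what makes this a theorem rather than a computation; as written you have a correct plan, not a proof.
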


\begin{proof}
See~\cite[Theorem 6.1]{DMLT}.
\end{proof}

The second one (\cite[Theorem 4.3]{DMT2}) is a uniqueness result for the weak solutions of the wave equation in a moving domain. Let $\hat H$ be a separable Hilbert space, and let $\{\hat V_t\}_{t\in[0,T]}$ be a family of separable Hilbert spaces with the following properties:
\begin{itemize}
    \item[$(i)$] for every $t\in[0,T]$ the space $\hat V_t$ is contained and dense in $\hat H$ with continuous embedding;
    \item[$(ii)$] for every $s,t\in[0,T]$, with $s<t$, $\hat V_s\subset \hat V_t$ and the Hilbert space structure on $\hat V_s$ is the one induced by $\hat V_t$.
\end{itemize}
Let $a\colon \hat V_T\times \hat V_T\to\R$ be a bilinear symmetric form satisfying the following conditions:
\begin{itemize}
    \item[$(iii)$] there exists $M_0$ such that
    \begin{equation*}
        |a(u,v)|\le M_0\norm{u}_{\hat V_T}\norm{v}_{\hat V_T}\quad\text{for every }u,v\in\hat V_T;
    \end{equation*}
    \item[$(iv)$] there exist $\lambda_0>0$ and $\nu_0\in\R$ such that
    \begin{equation*}
        a(u,u)\ge \lambda_0\norm{u}_{\hat V_T}^2-\nu_0\norm{u}_{\hat H}^2\quad\text{for every }u\in \hat V_T.
    \end{equation*}
\end{itemize}
Assume that 
\begin{itemize}
    \item[(U1)] for every $t\in[0,T]$ there exists a continuous and linear bijective operator $Q_t\colon \hat V_t\to \hat V_0$, with continuous inverse $R_t\colon\hat V_0\to\hat V_t$;
    \item[(U2)] $Q_0$ and $R_0$ are the identity maps on $\hat V_0$;
    \item[(U3)] there exists a constant $M_1$ independent of $t$ such that 
    \begin{align*}
        &\norm{Q_tu}_{\hat H}\le M_1 \norm {u}_{\hat H}\quad\text{for every }u\in \hat V_t,& &        \norm{R_tu}_{\hat H}\le M_1 \norm {u}_{\hat H}\quad\text{for every }u\in \hat V_0,\\
        &\norm{Q_tu}_{\hat V_0}\le M_1 \norm {u}_{\hat V_t}\quad\text{for every }u\in \hat V_t,& &        \norm{R_tu}_{\hat V_t}\le M_1 \norm {u}_{\hat V_0}\quad\text{for every }u\in \hat V_0.
    \end{align*}
\end{itemize}
Since $\hat V_t$ is dense in $\hat H$, (U3) implies that $R_t$ and $Q_t$ can be extended to continuous linear operators from $\hat H$ into itself, still denoted by $Q_t$ and $R_t$. We also require
\begin{itemize}
    \item[(U4)] for every $v\in \hat V_0$ the function $t\mapsto R_tv$ from $[0,T]$ into $\hat H$ has a derivative, denoted by $\dot R_t v$;
    \item[(U5)] there exists $\eta\in(0,1)$ such that 
    \begin{equation*}
        \norm{\dot R_t Q_tv}^2_{\hat H}\le \lambda_0(1-\eta)\norm{v}_{\hat V_t}^2\quad\text{for every }v\in \hat V_t;
    \end{equation*}
    \item[(U6)] there exists a constant $M_2$ such that 
    \begin{equation*}
        \norm{Q_tv-Q_sv}_{\hat H}\le M_2\norm{v}_{\hat V_s}(t-s)\quad\text{for every $0\le s<t\le T$ and every $v\in \hat V_s$};
    \end{equation*}
    \item[(U7)] for very $t\in[0,T)$ and for every $v\in \hat V_t$ there exists an element of $\hat H$, denoted by $\dot Q_tv$, such that 
    \begin{equation*}
        \lim_{h\to 0^+}\frac{Q_{t+h}v-Q_tv}{h}=\dot Q_t v\text{ in }\hat H.
    \end{equation*}
\end{itemize}
For every $t\in[0,T]$, define
\begin{align*}
&\alpha(t)\colon \hat V_0\times \hat V_0\to\R\quad\text{as }\alpha(t)(u,v):=a(R_tu,R_tv)\text{ for }u,v\in\hat V_0,\\
&\beta(t)\colon \hat V_0\times \hat V_0\to\R\quad\text{as }\beta(t)(u,v):=(\dot R_tu,\dot R_tv)\text{ for }u,v\in\hat V_0,\\
&\gamma(t)\colon \hat V_0\times \hat H\to\R\quad\text{as }\gamma(t)(u,v):=(\dot R_tu,R_tv)\text{ for }u\in \hat V_0\text{ and }v\in\hat H,\\
&\delta(t)\colon \hat H\times \hat H\to\R\quad\text{as }\delta(t)(u,v):=(R_tu,R_tv)-(u,v)\text{ for }u,v\in\hat H.
\end{align*}
We assume that there exists a constant $M_3$ such that
\begin{itemize}
    \item[(U8)] the maps $t\mapsto \alpha(t)(u,v)$, $t\mapsto \beta(t)(u,v)$, $t\mapsto \gamma(t)(u,v)$, and $t\mapsto \delta(t)(u,v)$ are Lipschitz continuous and for a.e. $t\in(0,T)$ their derivatives satisfy
    \begin{align*}
    &|\dot\alpha(t)(u,v)|\le M_3\norm{u}_{\hat V_0}\norm{v}_{\hat V_0}\quad\text{for }u,v\in\hat V_0,\\
    &|\dot\beta(t)(u,v)|\le M_3\norm{u}_{\hat V_0}\norm{v}_{\hat V_0}\quad\text{for }u,v\in\hat V_0,\\
    &|\dot\gamma(t)(u,v)|\le M_3\norm{u}_{\hat V_0}\norm{v}_{\hat H}\quad\text{for }u\in\hat V_0\text{ and }v\in\hat H,\\
    &|\dot\delta(t)(u,v)|\le M_3\norm{u}_{\hat H}\norm{v}_{\hat H}\quad\text{for }u,v\in\hat H.
    \end{align*}
\end{itemize}

\begin{theorem}[Uniqueness for the wave equation]\label{uniqwave}
Assume that $\hat H$, $\{\hat V_t\}_{t\in[0,T]}$, and $a$ satisfy $(i)$--$(iv)$ and that (U1)--(U8) hold. Given $u^0\in \hat V_0$, $u^1\in \hat H$, and $f\in L^2(0,T;\hat H)$, there exists a unique solution
\begin{equation*}
u\in \hat{\mathcal V}:=\{\varphi\in L^2(0,T;\hat V_T):\dot u\in L^2(0,T;\hat H),\,u(t)\in \hat V_t\text{ for a.e. }t\in(0,T)\}
\end{equation*}
to the wave equation
\begin{equation*}
    -\int_0^T(\dot u(t),\dot \varphi(t))_{\hat H}\,\de t+\int_0^Ta(u(t),\varphi(t))\,\de t=\int_0^T(f(t),\varphi(t))_{\hat H}\,\de t\quad\text{for every $\varphi\in\hat{\mathcal V}$},
\end{equation*}
satisfying the initial conditions $u(0)=u^0$ and $\dot u(0)=u^1$ in the sense that 
\begin{equation*}
\lim_{h\to 0^+}\frac{1}{h}\int_0^h\left(\|u(t)-u^0\|^2_{\hat V_t}+\|\dot{u}(t)-u^1\|^2_{\hat H}\right) \,\de t=0.
\end{equation*}
\end{theorem}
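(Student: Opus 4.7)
Existence is standard (Galerkin approximation on the fixed space $\hat V_0$ via the change of variables below), so I concentrate on uniqueness. By linearity, it suffices to show that if $u\in\hat{\mathcal V}$ solves the homogeneous problem with $u^0=0$, $u^1=0$, $f=0$, then $u\equiv 0$. The strategy is to carry out the change of variables $v(t):=Q_t u(t)\in \hat V_0$ in order to transplant the problem onto the \emph{fixed} Hilbert space $\hat V_0$, and then to run the Ladyzhenskaya-type uniqueness argument already used in the proof of Lemma~\ref{lem:uniq1}.

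The first step is to identify the equation satisfied by $v$. Using (U1)--(U7), one checks that $v\in L^2(0,T;\hat V_0)$ with $\dot v\in L^2(0,T;\hat H)$, and that $\dot u(t)=\dot R_t v(t)+R_t\dot v(t)$ in $\hat H$ for a.e.\ $t$. Plugging $\psi(t):=R_t\varphi(t)$, with $\varphi\in C^1_c(0,T;\hat V_0)$, as test function in the weak formulation and using the definitions of $\alpha,\beta,\gamma,\delta$, the equation for $v$ reads, distributionally in $t$,
\begin{equation*}
\frac{\de}{\de t}\bigl[(\dot v,\varphi)_{\hat H}+\gamma(t)(v,\varphi)+\delta(t)(\dot v,\varphi)\bigr]
-(\dot v,\dot\varphi)_{\hat H}-\gamma(t)(v,\dot\varphi)-\delta(t)(\dot v,\dot\varphi)
+\alpha(t)(v,\varphi)+\beta(t)(v,\varphi)=0,
\end{equation*}
with the initial conditions $v(0)=0$ and $\dot v(0)=0$ in the sense dictated by (U2) and the hypothesis on $u$. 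The point of the change of variables is that now $v(t)\in\hat V_0$ for every $t$, so we are allowed to insert test functions that live in the fixed space $\hat V_0$.

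Next, mimicking the argument of Lemma~\ref{lem:uniq1}, fix $s\in(0,T]$ and define
\begin{equation*}
\varphi_s(t):=\begin{cases}-\int_t^s v(\tau)\,\de\tau & t\in[0,s],\\ 0 & t\in[s,T],\end{cases}
\end{equation*}
so that $\varphi_s\in L^2(0,T;\hat V_0)$, $\dot\varphi_s=v$ on $(0,s)$, and $\varphi_s(s)=0$. Testing the transformed equation against $\varphi_s$ and integrating by parts in time yields, after using that $v(0)=0$, an identity of the form
\begin{equation*}
\tfrac12\|v(s)\|_{\hat H}^2+\tfrac12\alpha(0)(\varphi_s(0),\varphi_s(0))+\int_0^s\beta(t)(v,\varphi_s)\,\de t=\mathcal R(s),
\end{equation*}
where $\mathcal R(s)$ collects all the remainder terms produced by the $t$-derivatives of $\alpha,\beta,\gamma,\delta$ and by the cross terms involving $\dot R_t$. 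By (iv), $\alpha(0)$ is coercive on $\hat V_0$ up to a zero-order perturbation; the key point is the estimate of $\beta(t)(v,\varphi_s)=(\dot R_tv,\dot R_t\varphi_s)_{\hat H}$. Writing $\varphi_s(t)=\xi(t)-\xi(s)$ with $\xi(t):=\int_0^t v(\tau)\de\tau$, and using (U5) to bound $\|\dot R_t Q_t w\|_{\hat H}^2\le\lambda_0(1-\eta)\|w\|_{\hat V_t}^2$, one absorbs the leading $\beta$-term into the coercivity of $\alpha(0)$ with a strictly positive margin $\eta\lambda_0/2$. The remainder $\mathcal R(s)$ is estimated by (U8), which gives the Lipschitz bounds on $\dot\alpha,\dot\beta,\dot\gamma,\dot\delta$, producing only terms of the form $\int_0^s(\|v(t)\|_{\hat H}^2+\|\varphi_s(t)\|_{\hat V_0}^2)\,\de t$ and boundary contributions controlled by Cauchy--Schwarz.

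Putting all pieces together, one arrives at an inequality of the type
\begin{equation*}
\|v(s)\|_{\hat H}^2+\|\xi(s)\|_{\hat V_0}^2\le C_1 s\,\|\xi(s)\|_{\hat V_0}^2+C_2\int_0^s\bigl(\|v(t)\|_{\hat H}^2+\|\xi(t)\|_{\hat V_0}^2\bigr)\de t,
\end{equation*}
with constants $C_1,C_2$ depending only on $M_0,M_1,M_2,M_3,\lambda_0,\eta,\nu_0$. For $s\le s_0:=1/(2C_1)$ the first term on the right is absorbed, and Gronwall's lemma gives $v\equiv 0$ on $[0,s_0]$; a finite iteration then yields $v\equiv 0$ on $[0,T]$, hence $u\equiv 0$ by (U1). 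The main technical obstacle, as in Lemma~\ref{lem:uniq1}, is controlling the mixed term $\beta(t)(v,\varphi_s)$: it is precisely to make this absorption work that the strict inequality $1-\eta<1$ in (U5) is imposed.
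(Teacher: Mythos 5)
The paper does not give a proof of this statement: it is an auxiliary result quoted verbatim from Dal Maso--Toader \cite[Theorem 4.3]{DMT2}, and the paper's ``proof'' is the single line ``See~\cite[Theorem 4.3]{DMT2}.'' So there is no internal argument to compare yours against; what matters is whether your sketch would actually establish the theorem.

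There are two substantial gaps. First, the statement asserts \emph{existence and} uniqueness. You dismiss existence as ``standard (Galerkin approximation on the fixed space $\hat V_0$),'' but existence for the wave equation on time-dependent domains is precisely the nontrivial content of \cite{DMT2} (and of the time-discretization machinery in Sections~\ref{visc_sec2}--\ref{visc_sec3} of the present paper for the viscous variant). After the change of variables $v=Q_tu$, the transformed equation is a second-order equation with first-order terms coming from $\dot R_t$ and a time-dependent, merely Lipschitz-in-$t$ principal part $\alpha(t)$; a Galerkin scheme for this problem must control all the $\beta,\gamma,\delta$ cross-terms, which is essentially as hard as the uniqueness argument you do carry out. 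Waving it away leaves the existence half of the theorem unproved.

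Second, the transformed weak formulation you write down is not the correct one. Substituting $\psi(t)=R_t\varphi(t)$ and $\dot u=\dot R_tv+R_t\dot v$ into $-\int(\dot u,\dot\psi)_{\hat H}+\int a(u,\psi)=0$ yields
$$-\int_0^T\bigl[(\dot v,\dot\varphi)_{\hat H}+\delta(t)(\dot v,\dot\varphi)+\gamma(t)(v,\dot\varphi)+\gamma(t)(\varphi,\dot v)+\beta(t)(v,\varphi)\bigr]\de t+\int_0^T\alpha(t)(v,\varphi)\,\de t=0,$$
which, besides having $\beta$ with the opposite sign from what you wrote, also contains the term $\gamma(t)(\varphi,\dot v)=(\dot R_t\varphi,R_t\dot v)_{\hat H}$ that is absent from your displayed equation. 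This term is not harmless: it involves $\dot v$ paired against $\dot R_t\varphi$, and to remove the time derivative from $v$ one must integrate by parts in $t$, which introduces either $\ddot R_t$ (not controlled by (U1)--(U8)) or requires a careful reorganization using the Lipschitz bounds (U8) on $\dot\alpha,\dot\beta,\dot\gamma,\dot\delta$. Because of the missing term, the identity you derive after testing against $\varphi_s$ is not the one that actually holds, so the absorption argument via (U5) and the resulting Gronwall inequality are not justified by the computation as written. The overall strategy (transplant to the fixed space $\hat V_0$, Ladyzhenskaya test function $\varphi_s(t)=-\int_t^s v$, use (U5) to retain a strictly positive coercivity margin) is sound in spirit and mirrors Lemma~\ref{lem:uniq1}, but the bookkeeping of the first-order terms has to be done correctly before the final inequality $\|v(s)\|_{\hat H}^2+\|\xi(s)\|_{\hat V_0}^2\le C_1 s\|\xi(s)\|_{\hat V_0}^2+C_2\int_0^s(\cdots)\de t$ can be claimed.
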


\begin{proof}
See~\cite[Theorem 4.3]{DMT2}.
\end{proof}

We now are in position to prove the uniqueness theorem in the case of a moving domain. We consider the dimensional case $d=2$, and we require the following assumptions:
\begin{itemize}
	\item [(H1)] there exists a $C^{2,1}$ simple curve $\Gamma\subset\overline\Omega\subset\R^2$, parametrized by arc--length $\gamma\colon [0,\ell]\to\overline\Omega$, such that $\Gamma\cap\partial\Omega=\gamma(0)\cup\gamma(\ell)$ and $\Omega\setminus\Gamma$ is the union of two disjoint open sets with Lipschitz boundary;
	\item [(H2)] there exists a non decreasing function $s\colon [0,T]\to (0,\ell)$ of class $C^{1,1}$ such that $\Gamma_t=\gamma([0,s(t)])$;
	\item [(H3)] $|\dot s(t)|^2< \frac{\lambda_1}{C_K}$, where $\lambda_1$ is the ellipticity constant of $\mathbb C$ and $C_K$ is the constant that appears in Korn's inequality in \eqref{eq:hyp_korn}.
\end{itemize}
Notice that hypotheses (H1) and (H2) imply (E1)--(E3). We also assume that $\Psi$ satisfies~\eqref{Psi2} and there exists a constant $\e>0$ such that for every $t\in[0,T]$
\begin{equation}\label{Psi3}
\Psi(t,x)=0\quad\text{for every  $x\in\{y\in\overline\Omega:|y-\gamma(s(t))|<\e\}$}.
\end{equation}

\begin{theorem}\label{thm:uniq}
Assume $d=2$ and (H1)--(H3),~\eqref{Psi2}, and~\eqref{Psi3}. Then the system~\eqref{visco_elasto_system} with boundary conditions~\eqref{boundary0}--\eqref{boundary} has a unique weak solution $u\in\mathcal W$ which satisfies $u(0)=u^0$ and $\dot u(0)=u^1$ in the sense of~\eqref{icd}.
\end{theorem}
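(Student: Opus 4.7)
The plan is to combine the three ingredients cited by the authors: Lemma~\ref{lem:uniq1} (uniqueness in a fixed cracked domain), Theorem~\ref{uniqwave} (uniqueness for the pure wave equation on a moving cracked domain), and Theorem~\ref{finprop} (finite speed of propagation). Let $u_1, u_2 \in \mathcal W$ be two weak solutions and set $u := u_1 - u_2$, so that $u \in \mathcal W \cap \mathcal V^D$, $u(0) = 0$, $\dot u(0) = 0$ in the sense of~\eqref{icd}, and by Lemma~\ref{equiv} $u$ satisfies the corresponding homogeneous weak equation against any $\varphi \in \mathcal V^D$ with $\varphi(T) = 0$. The goal is to show $u \equiv 0$.

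\emph{Local-in-time geometric setup.} Since $t \mapsto \gamma(s(t))$ is Lipschitz, choose $\tau > 0$ so small that $s(\tau) - s(0) < \e/4$ and $\tau \sqrt{\|\mathbb C\|_\infty} < \e/4$, with $\e$ as in~\eqref{Psi3}. Put $B := B_{\e/2}(\gamma(s(0))) \cap \Omega$ and $B' := B_{\e/4}(\gamma(s(0))) \cap \Omega$. For every $t \in [0, \tau]$ we then have $\gamma(s(t)) \in B'$, $B \subset B_\e(\gamma(s(t)))$ and hence $\Psi(t, \cdot) \equiv 0$ on $B$, while the growing portion $\gamma([s(0), s(t)])$ of the crack is contained in $B'$, so $\Gamma_t \setminus \overline{B'} = \Gamma_0 \setminus \overline{B'}$ does not depend on $t$. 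It is enough to prove $u \equiv 0$ on $[0, \tau]$; iterating the argument on $[\tau, 2\tau]$, etc., exhausts $[0, T]$ in finitely many steps since $\tau$ depends only on $\e$, $\|\dot s\|_\infty$, and $\|\mathbb C\|_\infty$.

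\emph{Near the tip.} On $B$ the viscosity term vanishes, so $u$ satisfies weakly the pure wave equation $\ddot u - \div(\mathbb C \E u) = 0$ on the moving-cracked domain $B \setminus \Gamma_t$. By finite speed of propagation (Theorem~\ref{finprop}), the choice $\tau \sqrt{\|\mathbb C\|_\infty} < \dist(B', \partial B \cap \Omega) = \e/4$ guarantees that $u|_{B'}$ is insensitive to the trace of $u$ on $\partial B \cap \Omega$; in other words, one may replace $u$ inside $B$ by the solution with zero Dirichlet datum on $\partial B \cap \Omega$ without affecting the restriction to $[0, \tau] \times B'$. Using the $d = 2$ diffeomorphism construction of~\cite[Example~2.14]{DM-Luc} to freeze the growing crack piece inside $B$, we obtain operators $Q_t, R_t$ that we verify fulfil the abstract hypotheses (U1)--(U8) of Theorem~\ref{uniqwave}; assumption (H3), $|\dot s|^2 < \lambda_1/C_K$, is exactly the quantitative condition required for (U5) (the tip speed is strictly below the elastic wave speed). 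Theorem~\ref{uniqwave} then yields $u \equiv 0$ on $[0, \tau] \times B'$.

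\emph{Away from the tip and conclusion.} On $\Omega \setminus \overline{B'}$ the crack is $t$-independent, so the restriction of $u$ solves the viscoelastic system on the fixed cracked domain $\Omega \setminus (\Gamma_0 \cup \overline{B'})$, with zero initial data, zero Dirichlet condition on $\partial_D \Omega$, and (by the previous paragraph) zero trace on $\partial B' \cap \Omega$. Lemma~\ref{lem:uniq1}, applied with Dirichlet part of the boundary enlarged to include $\partial B' \cap \Omega$ (hypothesis~\eqref{Psi2} is inherited from the assumptions on $\Psi$), yields $u \equiv 0$ on $[0, \tau] \times (\Omega \setminus \overline{B'})$, which combined with the previous step gives $u \equiv 0$ on $[0, \tau]$. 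The main obstacle is the rigorous gluing carried out near the tip: one must explicitly adapt the diffeomorphism of~\cite[Example~2.14]{DM-Luc} so that the induced operators $Q_t, R_t$ satisfy (U1)--(U8) with the artificial boundary $\partial B \cap \Omega$ properly incorporated, and manage the cut-off step whose error is absorbed by Theorem~\ref{finprop}. Assumption (H3) is essential twice: quantitatively as (U5), and implicitly through the wave-speed bound $\sqrt{\|\mathbb C\|_\infty}$ needed in Theorem~\ref{finprop} to decouple $B'$ from the exterior.
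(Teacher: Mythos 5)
Your decomposition and the three ingredients you invoke are exactly the ones the paper uses, and the forward time-marching on intervals of a uniform length $\tau$ is a perfectly acceptable alternative to the paper's supremum/contradiction argument. However, there is a genuine gap in the way you invoke finite speed of propagation.

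You want to conclude, for small $\tau$, that $u|_{[0,\tau]\times B'}$ is independent of the trace of $u$ on $\partial B\cap\Omega$, so that it coincides with the zero-Dirichlet solution on $B\setminus\Gamma_t$, hence vanishes by Theorem~\ref{uniqwave}. This is a domain-of-dependence statement on the \emph{moving-crack} set $B\setminus\Gamma_t$, and Theorem~\ref{finprop} does not provide it: that theorem is stated for a \emph{fixed} open set $U$ with a fixed partition $S_0\subseteq S_1\subseteq\partial_L U$ of its boundary. The only fixed set available to you is the annulus $B\setminus\overline{B'}$, and there a comparison between $u$ and the zero-Dirichlet solution $v$ fails: $u-v$ has unknown trace on the inner circle $\partial B'$, so Theorem~\ref{finprop} (applied with $S_0=\emptyset$, $S_1=\partial B'\cup\partial B$) only gives $(u-v)(t)=0$ at points whose distance from \emph{both} $\partial B'$ and $\partial B$ exceeds $t\sqrt{\norm{\mathbb C}_\infty}$, which says nothing on $\partial B'$ or inside $B'$. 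Also, (H3) has nothing to do with the speed $\sqrt{\norm{\mathbb C}_\infty}$ entering Theorem~\ref{finprop}; it only enters through hypothesis (U5) of Theorem~\ref{uniqwave}.

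The correct use of Theorem~\ref{finprop}, which is what the paper does, is to apply it to $u$ itself (not to a difference with some comparison solution) in the fixed annulus $B\setminus\overline{B'}$, where the crack and $\Psi\equiv 0$ are both time-independent and $u$, $\dot u$ vanish at the initial time. This yields a third circle $\partial A_3$, with $\overline{B'}\subset A_3\subset\subset B$, on which $u(t)=0$ for all small $t$. Only then does the splitting become well-posed: inside $A_3$ one has the pure wave equation with moving crack and homogeneous Dirichlet data on $\partial A_3$, handled by Theorem~\ref{uniqwave}; outside $\overline{A_3}$ one has the viscoelastic system with a time-independent crack and homogeneous Dirichlet data on $\partial_D\Omega\cup\partial A_3$, handled by Lemma~\ref{lem:uniq1}. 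Your proof can be repaired by replacing the ``replace $u$ by the zero-Dirichlet solution'' step with this three-circle argument.
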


\begin{proof}
	As before let $u_1,u_2\in\mathcal W$ be two weak solutions to~\eqref{visco_elasto_system}--\eqref{boundary} with initial conditions~\eqref{initials}. Then $u:=u_1-u_2$ satisfies~\eqref{icd0} and~\eqref{eq:uniq} for every $\varphi\in\mathcal V^D$ such that $\varphi(T)=0$. Let us define
	$$t_0:=\sup\{t\in[0,T]:u(s)=0\text{ for every }s\in[0,t] \},$$
	and assume by contradiction that $t_0<T$. Consider first the case in which $t_0>0$. By (H1), (H2),~\eqref{Psi2}, and~\eqref{Psi3} we can find two open sets $A_1$ and $A_2$, with $A_1\subset\subset A_2\subset\subset\Omega$, and a number $\delta>0$  such that for every $t\in[t_0-\delta,t_0+\delta]$ we have $\gamma(s(t))\in A_1$, $\Psi(t,x)=0$ for every $x\in\overline A_2$, and $(A_2\setminus A_1)\setminus \Gamma $ is the union of two disjoint open sets with Lipschitz boundary. Let us define
	\begin{equation*}
	\hat V^1:=\{u\in H^1((A_2\setminus A_1)\setminus\Gamma_{t_0-\delta};\R^2): u=0\text{ on }\partial A_1\cup\partial A_2\},\quad \hat H^1:= L^2(A_2\setminus A_1;\R^2).
	\end{equation*}
	Since every function in $\hat V^1$ can be extended to a function in $V_{t_0-\delta}^D$, by classical results for linear hyperbolic equations (se, e.g.,~\cite{DL}), we deduce 
	$\ddot u\in L^2(t_0-\delta,t_0+\delta;(\hat V^1)')$ and that $u$ satisfies for a.e. $t\in(t_0-\delta,t_0+\delta)$ 
	\begin{equation*}
	\spr{\ddot u(t)}{\phi}_{(\hat V^1)'}+(\mathbb C\E u(t),\E \phi)_{ \hat H^1}=0\quad\text{for every }\phi\in \hat V^1.
	\end{equation*}
Moreover, we have $u(t_0)=0$ as element of $ \hat H^1$ and $\dot u(t_0)=0$ as element of $(\hat V^1)'$, since $u(t)\equiv 0$ in $[t_0-\delta,t_0)$, $u\in C^0([t_0-\delta,t_0];\hat H^1)$, and $\dot u\in C^0([t_0-\delta,t_0];(\hat V^1)')$. We are now in position to apply the result of finite speed of propagation of Theorem~\ref{finprop}. This theorem ensures the existence of a third open set $A_3$, with $A_1\subset\subset A_3\subset\subset A_2$, such that, up to choose a smaller $\delta$, we have $u(t)=0$ on $\partial A_3$ for every $t\in[t_0,t_0+\delta]$, and both $(\Omega\setminus A_3)\setminus\Gamma$ and $A_3\setminus \Gamma$ are union of two disjoint open sets with Lipschitz boundary.
	
In $\Omega\setminus A_3$ the function $u$ solves
	\begin{align*}
	-\int_{t_0-\delta}^{t_0+\delta}\int_{\Omega\setminus A_3}\dot u(t,x)\cdot \dot \varphi(t,x)\,\de x\,\de t&+\int_{t_0-\delta}^{t_0+\delta}\int_{\Omega\setminus A_3}\mathbb C(x)\E u(t,x)\cdot \E \varphi(t,x)\,\de x\,\de t\\
	&+\int_{t_0-\delta}^{t_0+\delta}\int_{\Omega\setminus A_3}\mathbb B(x)\Psi(t,x)\E \dot u(t,x)\cdot \Psi(t,x)\E \varphi(t,x)\,\de x\,\de t=0
	\end{align*}
	for every $\varphi\in L^2(t_0-\delta,t_0+\delta;\hat V^2)\cap H^1(t_0-\delta,t_0+\delta;\hat H^2)$ such that $\varphi(t_0-\delta)=\varphi(t_0+\delta)=0$, where 
	\begin{equation*}
	\hat V^2:=\{u\in H^1((\Omega\setminus A_3)\setminus\Gamma_{t_0-\delta};\R^2): u=0\text{ on }\partial_D\Omega\cup\partial A_3\},\quad \hat H^2:=L^2(\Omega\setminus A_3;\R^2).
	\end{equation*}
	Since $u(t)=0$ on $\partial_D\Omega\cup\partial A_3$ for every $t\in[t_0-\delta,t_0+\delta]$ and $u(t_0-\delta)=\dot u(t_0-\delta)=0$ in the sense of~\eqref{icd} (recall that $u\equiv 0$ in $[t_0-\delta,t_0)$), we can apply Lemma~\ref{lem:uniq1} to deduce $u(t)=0$ in $\Omega\setminus A_3$ for every $t\in[t_0-\delta,t_0+\delta]$.
	
	On the other hand in $A_3$, by setting
	\begin{equation*}
	\hat V^3_t:=\{u\in H^1(A_3\setminus\Gamma_t;\R^2): u=0\text{ on }\partial A_3\},\quad \hat H^3:= L^2(A_3;\R^2),
	\end{equation*}
	we get that the function $u$ solves
	\begin{align*}
	-\int_{t_0-\delta}^{t_0+\delta}\int_{A_3}\dot u(t,x)\cdot \dot \varphi(t,x)\,\de x\,\de t&+\int_{t_0-\delta}^{t_0+\delta}\int_{ A_3}\mathbb C(x)\E u(t,x)\cdot \E \varphi(t,x)\,\de x\,\de t=0
	\end{align*}
	for every $\varphi\in L^2(t_0-\delta,t_0+\delta;\hat V^3_{t_0+\delta})\cap H^1(t_0-\delta,t_0+\delta;\hat H^3)$ such that $\varphi(t)\in \hat V^3_t$ for a.e. $t\in(t_0-\delta,t_0+\delta)$ and $\varphi(t_0-\delta)=\varphi(t_0+\delta)=0$. Here we would like to apply the uniqueness result of Theorem~\ref{uniqwave} for the spaces $\{\hat V_t^3\}_{t\in[t_0-\delta,t_0+\delta]}$ and $\hat H^3$, endowed with the usual norms, and for the bilinear form
\begin{equation*}
a(u,v):=\int_{A_3}\mathbb C(x)\E u(x)\cdot \E v(x)\de x\quad\text{for every }u,v\in \hat V^3_{t_0+\delta}.
\end{equation*}
As show in~\cite[Example 2.14]{DM-Luc} we can construct two maps $\Phi,\Lambda\in C^{1,1}([t_0-\delta,t_0+\delta]\times \overline A_3;\R^2)$ such that for every $t\in[0,T]$ the function $\Phi(t,\cdot)\colon \overline A_3\to \overline A_3$ is a diffeomorfism of $A_3$ in itself with inverse $\Lambda(t,\cdot)\colon \overline A_3\to\overline A_3$. Moreover, $\Phi(0,y)=y$ for every $y\in \overline A_3$, $\Phi(t,\Gamma\cap \overline A_3)=\Gamma\cap\overline A_3$ and $\Phi(t,\Gamma_{t_0-\delta}\cap \overline A_3)=\Gamma_t\cap \overline A_3$ for every $t\in[t_0-\delta,t_0+\delta]$. For every $t\in[t_0-\delta,t_0+\delta]$, the maps $(Q_tu)(y):=u(\Phi(t,y))$, $u\in \hat V_t^3$ and $y\in A_3$,
and $(R_tv)(x):=v(\Lambda(t,x))$, $v\in \hat  V_{t_0-\delta}^3$ and $x\in A_3$, provide a family of linear and continuous operators which satisfy the assumptions (U1)--(U8) of Theorem~\ref{uniqwave} (see~\cite[Example 4.2]{DMT2}). The only condition to check is (U5). The bilinear form $a$ satisfies the following ellipticity condition
\begin{equation}\label{ella}
a(u,u)\ge \lambda_1\norm{\E u}_{L^2(A_3;\R^{2\times 2}_{sym})}^2\ge \frac{\lambda_1}{\hat C_k}\norm{u}_{\hat V_{t_0+\delta}^3}^2-\lambda_1\norm{u}_{\hat H^3}^2\quad\text{for every }u\in \hat V_{t_0+\delta}^3,
\end{equation}
where $\hat C_K$ is the constant in Korn's inequality in $\hat V_{t_0+\delta}^3$, namely
$$\norm{\nabla u}_{L^2(A_3;\R^{2\times 2})}^2\leq \hat C_K(\norm{u}_{L^2(A_3;\R^2)}^2+\norm{\E u}_{L^2(A_3;\R^{2\times 2}_{sym})}^2)\quad\text{for every }u\in \hat V_{t_0+\delta}^3.$$
Notice that for $t\in[t_0-\delta,t_0+\delta]$
\begin{equation*}
    (\dot R_tv)(x)=\nabla v(\Lambda(t,x))\dot\Lambda(t,x)\quad\text{for a.e. }x\in A_3,
\end{equation*}
from which we obtain 
\begin{equation*}
    \norm{\dot R_tQ_tu}_{\hat H^3}^2\le \int_{A_3}|\nabla u(x)|^2|\dot\Phi(t,\Lambda(t,x))|^2\,\de x.
\end{equation*}
Hence, have to show the property
\begin{equation*}
|\dot \Phi(t,y)|^2<\frac{\lambda_1}{\hat C_K}\quad\text{for every $t\in[t_0-\delta,t_0+\delta]$ and $y\in\overline A_3$}.
\end{equation*}
This is ensured by (H3). Indeed, as explained in~\cite[Example 3.1]{DM-Luc}, we can construct the maps $\Phi$ and $\Lambda$ in such a way that 
$$|\dot \Phi(t,y)|^2<\frac{\lambda_1}{C_K},$$
since $|\dot s(t)|^2<\frac{\lambda_1}{C_K}$. Moreover, every function in $\hat{V}^3_{t_0+\delta}$ can be extended to a function in $H^1(\Omega\setminus\Gamma;\R^d)$. Hence, for Korn's inequality in $\hat V_{t_0+\delta}^3$, we can use the same constant $C_K$ of $H^1(\Omega\setminus\Gamma;\R^d)$. This allows us to apply Theorem~\ref{uniqwave}, which implies $u(t)=0$ in $A_3$ for every $t\in[t_0,t_0+\delta]$. In the case $t_0=0$, it is enough to argue as before in $[0,\delta]$, by exploiting~\eqref{icd0}. Therefore $u(t)=0$ in $\Omega$ for every $t\in[t_0,t_0+\delta]$, which contradicts the maximality of $t_0$. Hence $t_0=T$, that yields $u(t)=0$ in $\Omega$ for every $t\in[0,T]$.
\end{proof}

\begin{remark}
Also Theorem~\ref{thm:uniq} is true in the antiplane case, with essentially the same proof. Notice that, when the displacement is scalar, we do not need to use Korn's inequality in~\eqref{ella} to get the coercivity in $\hat V_{t_0+\delta}^3$ of the bilinear form $a$ defined before. Therefore, in this case in (H3) it is enough to assume $|\dot s(t)|^2<\lambda_1$.
\end{remark}

\section{A Moving Crack Satisfying Griffith's Dynamic Energy--Dissipation Balance}\label{visc_sec5}

We conclude this paper with an example of a moving crack $\{\Gamma_t\}_{t\in[0,T]}$ and weak solution to~\eqref{visco_elasto_system}--\eqref{initials} which satisfy the energy--dissipation balance of Griffith's dynamic criterion, as happens in~\cite{DMLT} for the purely elastic case. In dimension $d=2$ we consider an antiplane evolution, which means that the displacement $u$ is scalar, and we take $\Omega:=\{x\in\R^2:|x|<R\}$, with $R>0$. We fix a constant $0<c<1$ such that $cT<R$, and we set 
\begin{equation*}
    \Gamma_t:=\{(\sigma,0)\in \overline\Omega\,:\sigma\leq ct\}.
\end{equation*}
Let us define the following function 
\begin{equation*}
    S(x_1,x_2):=Im(\sqrt{x_1+ix_2})=\frac{1}{\sqrt{2}}\frac{x_2}{\sqrt{|x|+x_1}}\quad x\in \R^2\setminus\{(\sigma,0):\sigma \le 0\},
\end{equation*}
where $Im$ denotes the imaginary part of a complex number. Notice that $S\in H^1(\Omega\setminus\Gamma_0)\setminus H^2(\Omega\setminus\Gamma_0)$, and it is a weak solution to 
\begin{equation*}
    \begin{cases}
    \Delta S=0 &\text{in $\Omega\setminus \Gamma_0$},\\
    \nabla S\cdot \nu=\partial_2S=0 &\text{on $\Gamma_0$}.
    \end{cases}
\end{equation*}
Let us consider the function 
\begin{equation*}
    u(t,x):=\frac{2}{\sqrt\pi}S\left(\frac{x_1-ct}{\sqrt{1-c^2}},x_2\right)\quad t\in[0,T],\,x\in\Omega\setminus\Gamma_t
\end{equation*}
and let $w(t)$ be its restriction to $\partial\Omega$. Since $u(t)$ has a singularity only at the crack tip $(ct,0)$, the function $w(t)$ can be seen as the trace on $\partial \Omega$ of a function belonging to $ H^2(0,T;L^2(\Omega))\cap H^1(0,T;H^1(\Omega\setminus\Gamma_0))$, still denoted by $w(t)$. It is easy to see that $u$ solves the wave equation
\begin{equation*}
    \ddot u(t)-\Delta u(t)=0\quad \text{in }\Omega\setminus\Gamma_t,\,t\in(0,T),
\end{equation*}
with boundary conditions
\begin{align*}
&u(t)=w(t)& &\hspace{-2,9cm}\text{on $\partial\Omega$, $t\in(0,T)$}, & &\\
&\frac{\partial u}{\partial \nu}(t)=\nabla u(t)\cdot \nu=0& &\hspace{-2,9cm}\text{on $\Gamma_t$, \,\, $t\in(0,T)$}, & & 
\end{align*}
and initial data
\begin{align*}
    &u^0(x_1,x_2):=\frac{2}{\sqrt\pi}S\left(\frac{x_1}{\sqrt{1-c^2}},x_2\right)\in H^1(\Omega\setminus\Gamma_0),\\
    &u^1(x_1,x_2):=-\frac{2}{\sqrt\pi}\frac{c}{\sqrt{1-c^2}}\partial_1 S\left(\frac{x_1}{\sqrt{1-c^2}},x_2\right)\in L^2(\Omega).
\end{align*}

Let us consider a function $\Psi$ which satisfies the regularity assumptions~\eqref{Psi2} and condition~\eqref{Psi3}, namely
$$\Psi(t)=0\quad\text{on $B_\e(t):=\{x\in \R^2:|x-(ct,0)|<\e\}$ for every $t\in[0,T]$},$$
with $0<\e<R-cT$. In this case $u$ is a weak solution, in the sense of Definition~\ref{def:weaksol}, to the damped wave equation
\begin{equation*}
    \ddot u(t)-\Delta u(t)-\div (\Psi^2(t)\nabla \dot u(t))=f(t)\quad \text{in }\in\Omega\setminus\Gamma_t,\,t\in(0,T),
\end{equation*}
with forcing term $f$ given by
\begin{equation*}
    f:=-\div (\Psi^2\nabla \dot u)=-\nabla\Psi\cdot2\Psi\nabla\dot u-\Psi^2\Delta \dot u\in L^2(0,T;L^2(\Omega)),
\end{equation*}
and boundary and initial conditions
\begin{align*}
&u(t)=w(t) & &\hspace{-2.5cm}\text{on $\partial\Omega$, $t\in(0,T)$},& &\\
& \frac{\partial u}{\partial \nu}(t)+\Psi^2(t) \frac{\partial \dot u}{\partial \nu}(t)=0 & &\hspace{-2.5cm}\text{on $\Gamma_t$,\,\, $t\in(0,T)$}, & & \\
& u(0)=u^0,\quad \dot u(0)=u^1.
\end{align*}
Notice that for the homogeneous Neumann boundary conditions on $\Gamma_t$ we used $\frac{\partial \dot u}{\partial \nu}(t)=\nabla \dot u(t)\cdot \nu=\partial_2\dot u(t)=0$ on $\Gamma_t$. By the uniqueness result proved in the previous section, the function $u$ coincides with that one found in Theorem~\ref{thm:mainresult}. Thanks to the computations done in~\cite[Section 4]{DMLT}, we know that $u$ satisfies for every $t\in[0,T]$ the following energy--dissipation balance for the undamped equation, where $ct$ coincides with the length of $\Gamma_t\setminus\Gamma_0$
\begin{align}\label{eq:en1}
&\frac{1}{2}\norm{\dot u(t)}^2_{L^2(\Omega)}+\frac{1}{2}\norm{\nabla u(t)}^2_{L^2(\Omega;\R^2)}+ct=\frac{1}{2}\norm{\dot u(0)}^2_{L^2(\Omega)}+\frac{1}{2}\norm{\nabla u(0)}^2_{L^2(\Omega;\R^2)}+\int_0^t(\frac{\partial u}{\partial\nu}(s),\dot w(s))_{L^2(\partial\Omega)}\,\de s.
\end{align}
Moreover, we have
\begin{equation}
\begin{aligned}
\int_0^t(\frac{\partial u}{\partial\nu}(s),\dot w(s))_{L^2(\partial\Omega)}\,\de s&=\int_0^t(\nabla u(s),\nabla\dot w(s))_{L^2(\Omega;\R^2)}\,\de s-\int_0^t (\dot u(s),\ddot w(s))_{L^2(\Omega)}\,\de s\\
&\quad+(\dot u(t),\dot w(t))_{L^2(\Omega)}-(\dot u(0),\dot w(0))_{L^2(\Omega)}.
\end{aligned}
\end{equation}
For every $t\in[0,T]$ we compute
\begin{align*}
(f(t),\dot u(t)-\dot w(t))_{L^2(\Omega)}&=-\int_{(\Omega\setminus B_\e(t))\setminus\Gamma_t}\div [\Psi^2(t,x)\nabla \dot u(t,x)](\dot u(t,x)-\dot w(t,x))\,\de x\\
&=-\int_{(\Omega\setminus B_\e(t))\setminus\Gamma_t}\div [\Psi^2(t,x)\nabla \dot u(t,x)(\dot u(t,x)-\dot w(t,x))]\,\de x\\
&\quad+\int_{(\Omega\setminus B_\e(t))\setminus\Gamma_t}\Psi^2(t,x)\nabla \dot u(t,x)\cdot(\nabla\dot u(t,x)-\nabla\dot w(t,x))\,\de x.
\end{align*}
If we denote by $\dot u^\oplus(t)$ and $\dot w^\oplus(t)$ the traces of $\dot u(t)$ and $\dot w(t)$ on $\Gamma_t$ from above and by $\dot u^\ominus(t)$ and $\dot w^\ominus(t)$ the trace from below, thanks to the divergence theorem we have
\begin{align*}
 &\int_{(\Omega\setminus B_\e(t))\setminus\Gamma_t}\div [\Psi^2(t,x)\nabla \dot u(t,x)(\dot u(t,x)-\dot w(t,x))]\,\de x\\
 &=\int_{\partial \Omega} \Psi^2(t,x)\frac{\partial \dot u}{\partial \nu}(t,x)(\dot u(t,x)-\dot w(t,x))\,\de x+\int_{\partial B_\e(t)} \Psi^2(t,x)\frac{\partial \dot u}{\partial \nu}(t,x)(\dot u(t,x)-\dot w(t,x))\,\de x\\
 &\quad-\int_{(\Omega\setminus B_\e(t))\cap\Gamma_t} \Psi^2(t,x)\partial_2 \dot u^\oplus(t,x)(\dot u^\oplus(t,x)-\dot w^\oplus(t,x))\,\de \mathcal H^1(x)\\
 &\quad+\int_{(\Omega\setminus B_\e(t))\cap\Gamma_t} \Psi^2(t,x)\partial_2 \dot u^\ominus(t,x)(\dot u^\ominus(t,x)-\dot w^\ominus(t,x))\,\de \mathcal H^1(x)=0,
\end{align*}
since $u(t)=w(t)$ on $\partial \Omega$, $\Psi(t)=0$ on $\partial B_\e(t)$, and $\partial_2\dot u(t)=0$ on $\Gamma_t$. Therefore for every $t\in[0,T]$ we get
\begin{align}\label{eq:en2}
(f(t),\dot u(t)-\dot w(t))_{L^2(\Omega)}=\|\Psi(t)\nabla \dot u(t)\|^2_{L^2(\Omega;\R^2)}-(\Psi(t)\nabla \dot u(t),\Psi(t)\nabla\dot w(t))_{L^2(\Omega;\R^2)}.
\end{align}
By combining~\eqref{eq:en1}--\eqref{eq:en2} we deduce that $u$ satisfies for every $t\in[0,T]$ the following Griffith's energy--dis\-sipation balance for the viscoelastic dynamic equation
\begin{equation}\label{endisbal}
\begin{aligned}
\frac{1}{2}\norm{\dot u(t)}^2_{L^2(\Omega)}+\frac{1}{2}\norm{\nabla u(t)}^2_{L^2(\Omega;\R^2)}&+\int_0^{t} \norm{\Psi(s) \nabla\dot{u}(s)}^2_{L^2(\Omega;\R^2)}\,\de s+ct\\
&= \frac{1}{2}\norm{\dot u(0)}^2_{L^2(\Omega)}+\frac{1}{2}\norm{\nabla u(0)}^2_{L^2(\Omega;\R^2)}+\mathcal W_{tot}(t),
\end{aligned}
\end{equation}
where in this case the total work takes the form
\begin{align*}
\mathcal W_{tot}(t)&:=\int_0^t \left[(f(s),\dot u(s)-\dot w(s))_{L^2(\Omega)}+(\nabla u(s),\nabla\dot w(s))_{L^2(\Omega;\R^2)}+(\Psi(s)\nabla\dot u(s),\Psi(s)\nabla\dot w(s))_{L^2(\Omega;\R^2)}\right]\de s\\
&\quad-\int_0^t (\dot u(s),\ddot w(s))_{L^2(\Omega)}\,\de s+(\dot u(t),\dot w(t))_{L^2(\Omega)}-(\dot u(0),\dot w(0))_{L^2(\Omega)}.
\end{align*}
Notice that equality~\eqref{endisbal} gives~\eqref{Psi_en_bal}. This show that in this model Griffith's dynamic energy--dissipation balance can be satisfied by a moving crack, in contrast with the case $\Psi=1$, which always leads to~\eqref{en_diss}.

\begin{acknowledgements}
The authors wish to thank Professors Gianni Dal Maso and Rodica Toader for having proposed the problem and for many helpful discussions on the topic. The authors are members of the {\em Gruppo Nazionale per l'Analisi Ma\-te\-ma\-ti\-ca, la Probabilit\`a e le loro Applicazioni} (GNAMPA) of the {\em Istituto Nazionale di Alta Matematica} (INdAM).
\end{acknowledgements}


\begin{thebibliography}{99}

\bibitem{AGS}{\sc L. Ambrosio, N. Gigli, and G. Savaré}:  Gradient flows in metric spaces and in the space of probability measures. Lectures in Mathematics ETH Zürich. Birkhäuser Verlag, Basel, 2005.

\bibitem{C2}{\sc M.~Caponi}: Linear hyperbolic systems in domains with growing cracks, {\it Milan J. Math.} {\bf 85} (2017), 149--185.

\bibitem{DM-Lar}{\sc G.~Dal Maso and C.J.~Larsen}: Existence for wave equations on domains with arbitrary growing cracks, {\it Atti Accad. Naz. Lincei Rend. Lincei Mat. Appl.} {\bf 22} (2011), 387--408.

\bibitem{DMLT}{\sc G. Dal Maso, C.J. Larsen, and R. Toader}: Existence for constrained dynamic Griffith fracture with a weak maximal dissipation condition. {\it J. Mech. Phys. Solids} {\bf  95} (2016), 697--707.
			
\bibitem{DMLT2}{\sc G. Dal Maso, C.J. Larsen, and R. Toader}: Existence for elastodynamic Griffith fracture with a weak maximal dissipation condition. {\it J. Math. Pures Appl.} {\bf 127} (2019), 160--191.

\bibitem{DMLN}{\sc G.~Dal Maso, G.~Lazzaroni, and L.~Nardini}: Existence and uniqueness of dynamic evolutions for a peeling test in dimension one, {\it J. Differential Equations} {\bf 261} (2016), 4897--4923.

\bibitem{DM-Luc}{\sc G.~Dal Maso and I.~Lucardesi}: The wave equation on domains with cracks growing on a prescribed path: existence, uniqueness, and continuous dependence on the data, {\it Appl. Math. Res. Express} {\bf 2017} (2017), 184--241.

\bibitem{DMT2}{\sc G. Dal Maso and R. Toader}: On the Cauchy problem for the wave equation on time--dependent domains, {\it J. Differential Equations} {\bf 266} (2019), 3209-3246.

\bibitem{DL}{\sc R. Dautray and J.L. Lions}: Analyse math\'ematique et calcul num\'erique pour les sciences et les techniques. Vol. 8. \'Evolution: semi-groupe, variationnel, Masson, Paris, 1988.

\bibitem{DMC}{\sc P.E. Dumouchel, J.J. Marigo, and M. Charlotte}: Dynamic fracture: An example of convergence towards a discontinuous quasistatic solution, {\it Contin. Mech. Thermodyn.} {\bf 20}, (2008) 1--19.

\bibitem{Grif}{\sc A.A.~Griffith}: The phenomena of rupture and flow in solids, {\it Philos. Trans. Roy. Soc. London} {\bf 221-A} (1920), 163--198.

\bibitem{Ld}{\sc O.A. Ladyzenskaya}: On integral estimates, convergence, approximate methods, and solution in functionals for elliptic operators, {\it Vestnik Leningrad. Univ.} {\bf 13} (1958), 60--69.

\bibitem{Mott}{\sc N.F. Mott}: Brittle fracture in mild steel plates, {\it Engineering} {\bf 165} (1948), 16--18.

\bibitem{NS}{\sc S. Nicaise and A.M. S\"andig}: Dynamic crack propagation in a 2D elastic body: the out--of--plane case, {\it J. Math. Anal. Appl.} {\bf 329} (2007), 1--30.	

\bibitem{OSY}{\sc O.A. Oleinik, A.S. Shamaev, and G.A. Yosifian}: Mathematical problems in elasticity and homogenization. Studies in Mathematics and its Applications, 26. North-Holland Publishing Co., Amsterdam, 1992.

\bibitem{RN}{\sc F. Riva and L. Nardini}: Existence and uniqueness of dynamic evolutions for a one dimensional debonding model with damping, submitted for publication (2018). Preprint SISSA 28/2018/MATE.

\bibitem{SL}{\sc L.I. Slepyan}: Models and phenomena in fracture mechanics, Foundations of Engineering Mechanics.  Springer-Verlag, Berlin, 2002.

\bibitem{T1}{\sc E. Tasso}: Weak formulation of elastodynamics in domains with growing cracks, submitted for publication (2018). Preprint SISSA 51/2018/MATE.

\end{thebibliography}
\end{document}